\theoremstyle{plain}
\newtheorem{theorem}{Theorem}
\newtheorem*{theorem*}{Theorem}
\newtheorem{lemma}[theorem]{Lemma}
\newtheorem{proposition}[theorem]{Proposition}
\newtheorem{corollary}[theorem]{Corollary}
\newtheorem*{corollary*}{Corollary}
\theoremstyle{definition}
\newtheorem{definition}[theorem]{Definition}
\theoremstyle{remark}
\newtheorem{remark}[theorem]{Remark}
\numberwithin{theorem}{section}
\numberwithin{equation}{section}
\renewcommand{\d}{\mathrm{d}}
\renewcommand{\leq}{\leqslant}
\renewcommand{\geq}{\geqslant}
\renewcommand{\epsilon}{\varepsilon}
\renewcommand{\Im}{\operatorname{Im}}
\newcommand{\D}{\mathrm{D}}
\newcommand{\la}{\lesssim}
\newcommand{\scri}{\mathscr{I}}
\newcommand{\e}{\mathrm{e}}
\newcommand{\slashgrad}{\slashed{\nabla}}
\newcommand{\sech}{\operatorname{sech}}
\DeclareFontFamily{U}{MnSymbolC}{}
\DeclareSymbolFont{MnSyC}{U}{MnSymbolC}{m}{n}
\DeclareFontShape{U}{MnSymbolC}{m}{n}{
    <-6>  MnSymbolC5
   <6-7>  MnSymbolC6
   <7-8>  MnSymbolC7
   <8-9>  MnSymbolC8
   <9-10> MnSymbolC9
  <10-12> MnSymbolC10
  <12->   MnSymbolC12}{}
\DeclareMathSymbol{\intprod}{\mathbin}{MnSyC}{'270}
\newcommand*{\defeq}{\mathrel{\vcenter{\baselineskip0.5ex \lineskiplimit0pt
                     \hbox{\scriptsize.}\hbox{\scriptsize.}}}%
                     =}
\newcommand*{\eqdef}{=\mathrel{\vcenter{\baselineskip0.5ex \lineskiplimit0pt
                     \hbox{\scriptsize.}\hbox{\scriptsize.}}}%
                     }
\newcommand\equalhat{\mathrel{\stackon[1.5pt]{=}{\stretchto{%
    \scalerel*[\widthof{=}]{\wedge}{\rule{1ex}{3ex}}}{0.5ex}}}}
\DeclareFontFamily{U}{BOONDOX-calo}{\skewchar\font=45 }
\DeclareFontShape{U}{BOONDOX-calo}{m}{n}{
  <-> s*[1.05] BOONDOX-r-calo}{}
\DeclareFontShape{U}{BOONDOX-calo}{b}{n}{
  <-> s*[1.05] BOONDOX-b-calo}{}
\DeclareMathAlphabet{\mathcalboondox}{U}{BOONDOX-calo}{m}{n}
\SetMathAlphabet{\mathcalboondox}{bold}{U}{BOONDOX-calo}{b}{n}
\DeclareMathAlphabet{\mathbcalboondox}{U}{BOONDOX-calo}{b}{n}
\DeclareMathOperator{\dvol}{dv}
\date{\today}
\title{Conformal scattering of the Maxwell-scalar field system on de Sitter space}
\author{Grigalius Taujanskas}
\address[G. Taujanskas]{Department of Pure Mathematics and Mathematical Statistics, University of Cambridge, CB3 0WB, UK}
\email{taujanskas@dpmms.cam.ac.uk}
\begin{document}

\maketitle

\begin{abstract} We prove small data energy estimates of all orders of differentiability between past null infinity and future null infinity of de Sitter space for the conformally invariant Maxwell-scalar field system. This allows us to construct bounded and invertible, but nonlinear, scattering operators taking past asymptotic data to future asymptotic data. We also deduce exponential decay rates for solutions with data having at least two derivatives, and for more regular solutions discover an asymptotic decoupling of the scalar field from the charge. The construction involves a carefully chosen complete gauge fixing condition which allows us to control all components of the Maxwell potential, and a nonlinear Gr\"onwall inequality for higher order estimates.
\end{abstract}

\tableofcontents
\addtocontents{toc}{\protect\setcounter{tocdepth}{1}}

Studies of scattering go back to the beginnings of physics. Perhaps the most famous modern mathematical treatment was developed in the 1960s by Lax and Phillips \cite{LaxPhillips1964,LaxPhillips1967}, who used spectral techniques to study the scattering of a wave by an obstacle in flat space. In general relativity it is of interest to study metric scattering, that is the effects of curved space on the asymptotic behaviour of fields. Around the same time as Lax and Phillips were developing their framework, Roger Penrose discovered a way to compactify certain spacetimes by conformally rescaling the metric and attaching a boundary, $\scri$ \cite{Penrose1963,Penrose1965}. He called the class of spacetimes admitting such a compactification \emph{asymptotically simple} and the boundary so attached \emph{null infinity}, for this was where all null geodesics ended up `at infinity'. This led to a brand new way of viewing the asymptotics of massless fields in general relativity: one works in Penrose's conformally compactified spacetime and studies the regularity of fields on $\scri$, and then translates the regularity in the conformally rescaled spacetime to fall-off conditions in the physical spacetime.

It was not until the work of Friedlander \cite{Friedlander1980,Friedlander2001} in 1980, however, that it was understood that the approaches of Lax and Phillips on the one hand and Penrose on the other could be combined into a robust \emph{geometric} formulation of scattering theory. Friedlander showed that, although one cannot perform the same analytically explicit constructions in curved space, one can make sense of the Lax--Phillips asymptotic profiles of fields by identifying them with suitably rescaled limits of fields going to infinity along null directions. These became known as Friedlander's radiation fields. The ideas of such \emph{conformal scattering} were taken up by Baez, Segal and Zhou \cite{Baez1989,Baez1990,BaezSegalZhou1990,BaezZhou1989} to study a nonlinear wave equation and to some extent Yang--Mills equations on flat space, and later by Mason and Nicolas \cite{MasonNicolas2004,MasonNicolas2007} to study linear equations on a large class of asymptotically simple spacetimes constructed by Corvino, Schoen, Chru\'sciel, Delay, Klainerman, Nicol\`o, Friedrich and others \cite{ChruscielDelay2002,ChruscielDelay2003,Corvino2000,CorvinoSchoen2006,KlainermanNicolo1999,KlainermanNicolo2003}.
This spurred a programme of constructing conformal scattering theories for various fields on a variety of backgrounds and since then a number of works have appeared, many focussing on conformal scattering on black hole spacetimes\footnote{See also \cite{KehleShlapentokhRothman2018,VandeMoortel2017} for some results in interiors of black holes.}\cite{HafnerNicolas2004,Joudioux2010,Mokdad2017,Nicolas2013,Nicolas2015}. It should be mentioned that there have been plenty of works studying relativistic scattering theory without employing the conformal method, notably by Dimock and Kay in the 1980s \cite{Dimock1985,DimockKay1986} and later by Bachelot \cite{Bachelot1991,Bachelot1994} and collaborators Nicolas, H\"afner, Daud\'e, and Melnyk, among many others, a programme which eventually led to rigorous proofs of the Hawking effect \cite{Bachelot1999,Melnyk2004}. 

The above programmes were concerned mainly with asymptotically flat spacetimes. However, astronomical observations have by now shown that the cosmological constant $\Lambda$ in our universe, though tiny, is positive \cite{Perlmutter2000,Perlmutter1999,Riess1998,Schmidt1998}. It is thus of interest to study scattering, especially of nonlinear fields, on de Sitter space. De Sitter space is the Lorentzian analogue of the sphere in Euclidean geometry and one of the three maximally symmetric solutions to the vacuum Einstein equations as classified by the sign of the cosmological constant, with flat Euclidean space corresponding to Minkowski space ($\Lambda = 0$) and hyperbolic space corresponding to anti-de Sitter space ($\Lambda < 0$). As such, de Sitter space differs from Minkowski space in several crucial aspects. Firstly, it is not asymptotically flat. Nonetheless, it is asymptotically simple in the sense of Penrose \cite{Penrose1965} and so admits a conformal compactification. Secondly, the positive cosmological constant, no matter how small, renders null infinity \emph{spacelike} in de Sitter space, which has implications for conformal scattering. In the asymptotically flat case the constructions of Mason and Nicolas required the resolution of a global linear Goursat problem, which had been shown by H\"ormander \cite{Hormander1990} to be solvable in some generality. In de Sitter space, however, a spacelike $\scri$ means that the construction of a scattering theory instead requires the resolution of a regular Cauchy problem. Thirdly, while obtaining flat space scattering and peeling results through conformal techniques is fine for linear fields, nonlinear fields generically possess so-called charges at spacelike infinity \cite{Petrescu1996,AbbottDeser1982,ChruscielKondracki1987}. This is a major obstruction to constructing conformal scattering theories for nonlinear fields in asymptotically flat spacetimes and is related to infrared divergences in quantum field theory \cite{KulishFaddeev1970,NashStuller1978}. The problem is entirely absent in de Sitter space as it is spatially compact.

From an analytic point of view, it has been known since the work of Friedrich \cite{Friedrich1986,Friedrich1991} that de Sitter space is a stable solution of Einstein's equations with a positive cosmological constant. Moreover, a recent and much celebrated result of Hintz and Vasy has shown that Kerr-de Sitter black holes are stable \cite{HintzVasy2016}. One therefore expects scattering results on de Sitter space to fit into a larger host of stories on asymptotically de Sitter spacetimes. Other results in this vein have been obtained by, for example, Vasy, Melrose and S\'a Barreto, \cite{Vasy2007,MelroseSaBarretoVasy2014}. From a more physical perspective, de Sitter space has the peculiar feature that no single observer can ever observe the entire spacetime, in contrast to the Minkowski case where an observer's past lightcone eventually contains the whole history of the universe. This is related to the existence of cosmological horizons, null hypersurfaces criss-crossing the Penrose diagram of de Sitter space. Their existence has implications for the definition of a classical scattering matrix: the construction of one requires a timelike Killing or conformally Killing vector field, and here one has a choice in de Sitter space. One might wish to use the Killing field provided by the standard static coordinates, i.e. the coordinates an observer at the south pole in de Sitter space might use for themselves, but this is problematic as it fails to be timelike and future pointing beyond the cosmological horizons. Another approach is to conformally compactify de Sitter space and embed it in the Einstein cylinder, where one has a natural globally timelike Killing field which becomes conformally Killing in physical de Sitter space. This can then be used to define an observer-oblivious classical scattering matrix in de Sitter space. We adopt the latter approach here. The importance of the construction of such scattering matrices for quantum gravity in de Sitter is explained well in \cite{LesHouchesLectureNotes} and the references therein.

This paper is organized as follows. In \Cref{sec:conventions} we state the conventions and notation used in the paper, and in \Cref{sec:confinvariantsystem} we introduce the conformally invariant Maxwell-scalar field system that we subsequently study. In \Cref{sec:deSitter} we describe de Sitter space $\mathrm{dS}_4$, a number of standard coordinate systems on $\mathrm{dS}_4$, its conformal compactification, and our choice of energy-momentum tensor for the Maxwell-scalar field system on the conformally rescaled spacetime. In \Cref{sec:maintheorems} we state the main results in detail. \Cref{sec:fieldequationsgaugefixing,sec:wellposedness} contain a detailed derivation of the required gauge fixing conditions, the formulation of the Cauchy problem for our system, and an existence theorem. \Cref{sec:energies,sec:energyestimates,sec:higherorderestimates} contain the inductive energy estimates on which our results rest. \Cref{sec:proofofH2estimates,sec:proofofhigherorderestimates,sec:proofofdecayrates} finish off the proofs of the main results.

\section{Results}

We prove small\footnote{See, however, \cite{Taujanskas2019} for an extension to large data.} data energy estimates of all orders of differentiability $m$ between $\scri^-$ and $\scri^+$ of de Sitter space for the conformally invariant Maxwell-scalar field system and show the existence of small data scattering operators $\mathscr{S}_m$ for all $m \geq 2$. These estimates rely crucially on the subcritical nature of the nonlinearity of the Maxwell-scalar field system in four dimensions. We find that, using a careful choice of gauge, it is possible to control all components of the Maxwell potential and the scalar field, and close the estimates using a nonlinear Gr\"onwall inequality. We may initially state the main theorem as follows. The full statements of the main theorems can be found in \Cref{sec:maintheorems}.

Consider the Penrose diagram of de Sitter space and an initial surface $\Sigma \simeq \mathbb{S}^3$, as depicted in \Cref{fig:square}.

\begin{figure}[h]
\centering
	\begin{tikzpicture}
	\centering
	\node[inner sep=0pt] (desitterpenrose) at (3.4,0)
    	{\includegraphics[width=.24\textwidth]{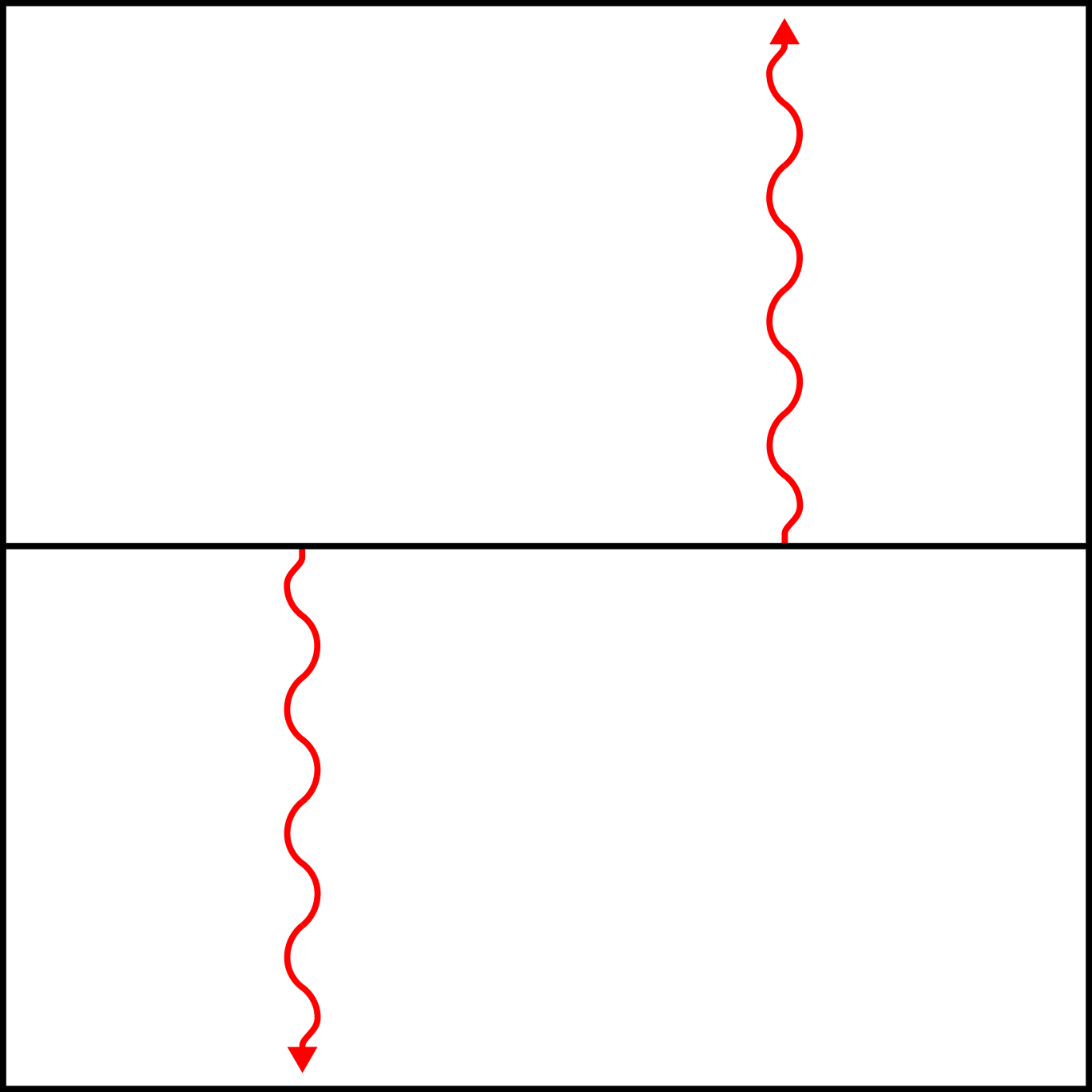}};

	\node[label={[shift={(0.2,-0.4)}]North Pole}] {};
	\node[label={[shift={(6.6,-0.4)}]South Pole}] {};
	\node[label={[shift={(3.4,1.9)}]$\scri^+$}] {};
	\node[label={[shift={(3.4,-2.7)}]$\scri^-$}] {};
	\node[label={[shift={(3.4,-0.1)}]$\Sigma$}] {};
	
	\end{tikzpicture}
	\caption{The Penrose diagram for $\mathrm{dS}_4$. The wavy red lines represent the forward and backward wave operators $\mathscr{W}^\pm_m$.} \label{fig:square}	
\end{figure}

\begin{theorem} For any $m \in \mathbb{N}$, the $H^m \oplus H^{m-1}$ norm on null infinity of the rescaled solution of the Maxwell-scalar field system is equivalent to the $H^m \oplus H^{m-1}$ norm of the initial data, provided the initial data is sufficiently small.	
\end{theorem}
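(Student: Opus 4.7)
The plan is to work entirely on the conformally compactified spacetime, which is a compact slab of the Einstein cylinder $\mathbb{R} \times \mathbb{S}^3$ bounded by $\scri^\pm$. Since both $\scri^\pm$ are spacelike, they are regular Cauchy surfaces for the rescaled system, so the theorem becomes a pair of energy estimates---forward and backward---between $\Sigma$ and each copy of $\scri$ for the conformally rescaled fields $(\hat A, \hat\phi)$. Before doing anything else I would dispose of the gauge freedom: a bare Lorenz condition controls only gauge-invariant curvature quantities, whereas the theorem is stated in terms of $(A,\phi)$ themselves, so I would use the complete gauge fixing condition of \Cref{sec:fieldequationsgaugefixing} (Lorenz together with a condition on $\Sigma$ exhausting the residual freedom), which combined with the well-posedness result of \Cref{sec:wellposedness} reduces the problem to proving energy estimates of all orders $m$ for the fully-gauged rescaled Cauchy problem.

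The heart of the proof is a hierarchy of energies $\mathcal{E}_m$, defined by contracting the energy-momentum tensor of \Cref{sec:deSitter} with the globally timelike Killing field of $\mathbb{R} \times \mathbb{S}^3$ and commuting with $m-1$ ambient Killing vector fields (time translation and the Killing fields of the round $\mathbb{S}^3$). For $m=1$ the coercivity of $\mathcal{E}_1$ over the $H^1 \oplus L^2$ norm is immediate from the pointwise positivity of the chosen tensor. For $m \geq 2$, the divergence of the commuted, contracted energy-momentum tensor produces cubic and quartic error terms in the fields; the decisive input is that in four spacetime dimensions $H^2 \hookrightarrow L^\infty$, so the nonlinearity is Sobolev-subcritical and at each order one factor can be placed in $L^2$ and the remaining factors in $L^\infty$ using induction. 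This yields a differential inequality of the schematic form
\[
\frac{\d}{\d \tau} \mathcal{E}_m(\tau) \leq C \left( \mathcal{E}_m(\tau) + \mathcal{E}_m(\tau)^{3/2} + \cdots + \mathcal{E}_m(\tau)^{k} \right),
\]
a nonlinear Gr\"onwall inequality whose solution remains finite on the compact conformal-time slab provided the initial energy is below an $m$-dependent threshold. Running the identical argument in reverse conformal time gives the lower bound, yielding the claimed equivalence of $H^m \oplus H^{m-1}$ norms between $\Sigma$ and each of $\scri^\pm$.

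The main obstacle, I expect, will be organising the top-order commutator errors so that the hierarchy actually closes from a single smallness assumption at low order: at level $m$ one inevitably encounters terms in which every factor may carry up to $m$ derivatives, and a naive Sobolev product estimate fails at the borderline. The subcritical scaling of the quartic Maxwell-scalar interaction in four dimensions provides just enough room, at the cost of carefully isolating the unique top-order factor, controlling all others in $L^\infty$ via the inductive hypothesis, and using smallness to absorb the residual top-order contribution back into the left-hand side. This inductive-plus-absorption scheme is presumably what the abstract means by the ``nonlinear Gr\"onwall inequality for higher order estimates''; once it closes, the scattering operators $\mathscr{S}_m = \mathscr{W}^+_m \circ (\mathscr{W}^-_m)^{-1}$ are immediately bounded and invertible between the asymptotic spaces.
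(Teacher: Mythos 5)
Your overall architecture---work on the compact slab of the Einstein cylinder between the two spacelike copies of $\scri$, fix the gauge completely, build a hierarchy of geometric energies by contracting $\mathbf{T}_{ab}$ with $\partial_\tau$ and commuting with vector fields on $\mathbb{S}^3$, estimate the commutator errors by Sobolev embedding on the three-dimensional slices, and close with a nonlinear Gr\"onwall inequality run in both time directions---is exactly the paper's. But your plan as written has a genuine gap at the gauge. You gloss the complete gauge fixing as ``Lorenz together with a condition on $\Sigma$ exhausting the residual freedom''; the paper in fact works in the \emph{strong Coulomb gauge} $\slashgrad \cdot \mathbf{A} = 0$, $\bar{A}_0(\tau) = 0$ (Lorenz appears only inside the Choquet--Bruhat existence theorem and is immediately traded away). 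This is not cosmetic. The energy density $\mathbf{T}_{00}[A]$ is built from $F$, so by itself it controls $\dot{\mathbf{A}} - \slashgrad A_0$ and the antisymmetrized gradient of $\mathbf{A}$, not the potential---yet the theorem is a statement about $H^m \oplus H^{m-1}$ norms of $(\mathbf{A}, A_0, \phi)$ themselves. It is precisely the Coulomb condition that makes the cross term $-\dot{\mathbf{A}} \cdot \slashgrad A_0$ integrate to zero and converts $-\tfrac{1}{2}(\slashgrad_\mu \mathbf{A}_\nu)(\slashgrad^\nu \mathbf{A}^\mu)$, after integration by parts against the Ricci curvature of $\mathbb{S}^3$, into $+\int |\mathbf{A}|^2$, giving coercivity of $\mathcal{E}_\tau[A]$ over $\|\mathbf{A}\|^2_{H^1} + \|\dot{\mathbf{A}}\|^2_{L^2} + \|\slashgrad A_0\|^2_{L^2}$; and it is the mean-zero condition $\bar{A}_0 = 0$ that supplies the Poincar\'e inequality recovering $\|A_0\|_{L^2}$. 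In Lorenz gauge neither mechanism is available, so your $m=1$ coercivity claim (``immediate from the pointwise positivity of the chosen tensor'') fails for the Maxwell sector.

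The second missing idea concerns the non-dynamical component $A_0$. In Coulomb gauge $A_0$ solves the elliptic equation $-\slashed{\Delta} A_0 + |\phi|^2 A_0 = -\operatorname{Im}(\bar{\phi}\dot{\phi})$ on each slice, and the worst commutator errors at order $m$ involve $\slashgrad^m \dot{A}_0$, which no energy controls directly. The paper devotes a separate proposition to this: differentiate the elliptic equation in $\tau$, use the wave equation for $\phi$ to bound $\ddot{\phi}$, and absorb a residual term using smallness of $\mathrm{S}_1[\phi]$, yielding $\|\dot{A}_0\|^2_{H^1} \lesssim \mathrm{S}_2[\phi](1 + \mathrm{S}_1[A])^2$ and inductively $\|\dot{A}_0\|^2_{H^m} \lesssim \mathrm{S}_{m+1}$. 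Without this the Gr\"onwall inequality does not close, and your proposal does not anticipate it. Conversely, the difficulty you do anticipate---borderline top-order products requiring absorption into the left-hand side---is not where the trouble lies: every error term contains at most two top-order factors, all remaining factors sit in $H^2(\mathbb{S}^3) \hookrightarrow L^\infty(\mathbb{S}^3)$ (note the relevant embedding is on the $3$-sphere, not in four spacetime dimensions, where $H^2$ does not embed into $L^\infty$), and smallness enters only through the superlinear terms of the polynomial in the Gr\"onwall lemma and through the $A_0$ estimates.
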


Using these estimates, we construct bounded and invertible, but nonlinear, wave and scattering operators.

\begin{theorem} For any $m \geq 2$ there exist bounded and invertible, but nonlinear, forward and backward wave operators $\mathfrak{T}_m^\pm$ mapping small $H^m \oplus H^{m-1}$ Maxwell-scalar field data on $\Sigma$ to small $H^m \oplus H^{m-1}$ Maxwell-scalar field data on $\scri^\pm$, and a bounded invertible scattering operator 
\[ \mathscr{S}_m = \mathfrak{T}^+_m \circ (\mathfrak{T}^-_m )^{-1} \]
mapping small $H^m \oplus H^{m-1}$ Maxwell-scalar field data on $\scri^-$ to small $H^m \oplus H^{m-1}$ Maxwell-scalar field data on $\scri^+$.
\end{theorem}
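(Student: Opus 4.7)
The strategy is to leverage the norm equivalence of the previous theorem, together with the well-posedness of the Cauchy problem for the gauge-fixed rescaled Maxwell--scalar field system, to build $\mathfrak{T}_m^\pm$ by pure evolution-plus-trace. Given small initial data on $\Sigma \simeq \mathbb{S}^3$ in $H^m \oplus H^{m-1}$, I would first invoke the existence theorem of \Cref{sec:wellposedness} with the gauge condition of \Cref{sec:fieldequationsgaugefixing} to produce a unique rescaled solution on the entire compactified Penrose diamond. Global existence all the way up to $\scri^\pm$ is the content of the inductive energy estimates of \Cref{sec:energies,sec:energyestimates,sec:higherorderestimates}: the small-data Gr\"onwall mechanism prevents blow-up before $\scri^\pm$ is reached. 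Because $\scri^\pm$ are regular spacelike hypersurfaces of the conformal compactification, the rescaled fields have well-defined $H^m \oplus H^{m-1}$ traces there, which I declare to be $\mathfrak{T}_m^\pm(\mathrm{data})$. Boundedness in both directions is then just the upper and lower inequalities of the previous theorem.

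Invertibility rests on the spacelike nature of $\scri^\pm$ in de Sitter space. Given small $H^m \oplus H^{m-1}$ data on $\scri^+$, I would solve a \emph{regular} Cauchy problem with $\scri^+$ as initial surface, evolving backward to $\Sigma$. The well-posedness theory of \Cref{sec:wellposedness} applies in either time direction --- the equations are second-order hyperbolic and the gauge choice is time-symmetric --- and the same energy estimates, run backward, yield a small solution whose restriction to $\Sigma$ is the desired preimage. Uniqueness of the Cauchy problem from $\Sigma$ then shows that this candidate inverse indeed inverts $\mathfrak{T}_m^+$ on both sides, and boundedness of $(\mathfrak{T}_m^+)^{-1}$ is the remaining direction of the norm equivalence. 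The map $\mathfrak{T}_m^-$ is constructed and inverted identically, with the roles of the future and past exchanged.

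The scattering operator is then the composition $\mathscr{S}_m = \mathfrak{T}_m^+ \circ (\mathfrak{T}_m^-)^{-1}$, with inverse $\mathfrak{T}_m^- \circ (\mathfrak{T}_m^+)^{-1}$; both are bounded on appropriate small-data neighbourhoods by the norm equivalence applied twice. The main difficulty I anticipate is one of quantification: because all four constituent maps are nonlinear, their composition only behaves well if the smallness thresholds chosen on $\scri^-$, $\Sigma$, and $\scri^+$ are mutually compatible. Concretely, one needs the image under $(\mathfrak{T}_m^-)^{-1}$ of a suitably small ball on $\scri^-$ to lie inside the small-data ball on $\Sigma$ for which $\mathfrak{T}_m^+$ is defined, which amounts to tracking the implicit constants in the norm equivalence and taking the outermost smallness parameter small enough. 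This book-keeping is the one place where the nonlinearity genuinely bites; everything else is a combination of well-posedness, uniqueness, and the already-established energy estimates.
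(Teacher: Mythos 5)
Your proposal is correct and follows essentially the same route as the paper: define $\mathfrak{T}^\pm_m$ by forward/backward evolution plus trace on the spacelike $\scri^\pm$, get boundedness from the energy equivalence, obtain invertibility from the backward regular Cauchy problem together with uniqueness, and compose while tracking the smallness thresholds (the paper formalises this last point by restricting the codomain to the image $\mathscr{D}^\pm_{m,\epsilon_1}$ and noting it contains a small ball $\mathrm{S}^\pm_{m,\delta}$).
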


As a corollary, our estimates also imply exponential decay rates for the Maxwell-scalar field system on de Sitter space with small $H^2 \oplus H^1$ initial data. The decay rates are a partial extension of the results of Melrose, S\'a Barreto and Vasy \cite{MelroseSaBarretoVasy2014}.

\begin{corollary} The scalar field and the components of the Maxwell potential decay exponentially in proper time along timelike geodesics approaching $\scri$.
\end{corollary}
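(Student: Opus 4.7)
The plan is to cash in the uniform boundedness that follows from the $H^2 \oplus H^1$ small-data estimate and convert it into physical decay rates by carefully tracking the conformal rescaling. Since $\scri$ is spacelike and the energy estimate bounds the rescaled solution $(\hat\phi, \hat A_a)$ uniformly in $H^2 \oplus H^1$ on every hypersurface of the compactified foliation $\{t = \mathrm{const}\}$ up to and including $\scri$, the first step is a Sobolev embedding. Because the spatial slices are modelled on $\mathbb{S}^3$, one has $H^2(\mathbb{S}^3)\hookrightarrow L^\infty(\mathbb{S}^3)$, which immediately gives uniform pointwise bounds $\|\hat\phi\|_{L^\infty} + \|\hat A\|_{L^\infty} \lesssim 1$ throughout the conformal completion, with a constant depending only on the size of the initial data.

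The second step is to invoke the conformal relations between the physical and rescaled fields. With the conformal factor $\Omega$ in the paper's conventions we have $\phi = \Omega\,\hat\phi$, and the Maxwell potential $A$ transforms trivially as a $1$-form, so its components in the coordinate cobasis agree with those of $\hat A$. Converting to a physical orthonormal frame introduces the required factor of $\Omega$. Combined with the $L^\infty$ bounds on the rescaled fields this yields $|\phi|\lesssim\Omega$ and bounds of the form $|A_{(a)}|\lesssim\Omega$ in a physically adapted frame, uniformly on $\mathrm{dS}_4$.

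The third step is to identify $\Omega$ with a simple function of proper time along any timelike geodesic approaching $\scri$. In global coordinates on $\mathrm{dS}_4$ the physical metric is $-d\tau^2 + \cosh^2(\tau)\,g_{\mathbb{S}^3}$, and under the Penrose compactification one has $\Omega = \cos t$ with $\sec t = \cosh\tau$ along a comoving observer, so
\[
\Omega(\tau) = \frac{1}{\cosh\tau} = 2\,\mathrm{e}^{-\tau} + O\bigl(\mathrm{e}^{-3\tau}\bigr) \qquad \text{as }\tau\to\infty.
\]
For a general timelike geodesic approaching $\scri^\pm$ the same asymptotic $\Omega \sim \mathrm{e}^{-\tau}$ persists, because such geodesics converge to a comoving observer at a definite endpoint on $\scri$ and the conformal factor depends only on $t$. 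Combining with the previous step gives the desired exponential decay of $\phi$ and of the frame components of $A$.

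The routine part is the Sobolev embedding; the main obstacle is making precise what "components of the Maxwell potential" means, since $A$ is a gauge-dependent $1$-form and its raw components scale differently in coordinate vs.\ orthonormal bases. This is where the complete gauge fixing introduced in the paper is essential: it ensures that all components of $\hat A$, and not merely the field strength $\hat F$, are controlled at the $H^1$ level, so that the Sobolev step applies to $\hat A$ itself and the conformal factor $\Omega$ can be extracted cleanly in a physically meaningful frame.
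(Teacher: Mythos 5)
Your argument is correct and rests on the same mechanism as the paper's: the uniform $H^2\oplus H^1$ control of the rescaled solution up to $\scri$, the embedding $H^2(\mathbb{S}^3)\hookrightarrow L^\infty(\mathbb{S}^3)$ to get pointwise bounds on the compactification, and the conformal weights $(\tilde\phi,\tilde A_a)=(\Omega\phi,A_a)$ together with $\Omega=H/\cosh(H\eta)\la \e^{-H|\eta|}$ to convert boundedness into decay. Where you genuinely diverge is the spatial part of the Maxwell potential. The paper's own computation gives $|\tilde{\mathbf{A}}|_{\mathfrak{s}_3}=|\mathbf{A}|_{\mathfrak{s}_3}\la 1$ --- no decay at all in that normalisation --- and recovers exponential decay of ``all components'' only by passing to the static coordinates \eqref{physicaldSmetricstatic} and estimating $\tilde A_t$, $\tilde A_r$ and $r^{-1}|\tilde A|_{\mathfrak{s}_2}$ at fixed $r$ as $t\to\infty$. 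You instead measure the spatial components in a \emph{physical} orthonormal frame, which multiplies the $\mathfrak{s}_3$-orthonormal components by a further factor of $\Omega$ and so turns the mere boundedness of $|\mathbf{A}|_{\mathfrak{s}_3}$ into an $\mathcal{O}(\Omega)$ decay; this is consistent with (indeed equivalent to) the paper's statements and gives a cleaner frame-level version of the corollary along comoving observers, whereas the paper's static-coordinate detour yields the explicit rates seen in region I and, at the next order of regularity, the asymptotic profile $\tilde\phi\sim cF(r)^{-1/2}\e^{-Ht}$, which your route does not reach but which the corollary does not require. Two loose ends, neither fatal: your claim that the coordinate-cobasis components of $\tilde A$ and $A$ ``agree'' holds only in the spatial slots (the time slots differ by the Jacobian $\d\tau/\d\eta=\Omega$, which is precisely the source of the decay of $\tilde A_\eta$), and the passage from comoving observers to arbitrary timelike geodesics approaching $\scri$ is asserted rather than proved --- though the paper itself only treats comoving and static worldlines, so you are no worse off than its own argument on this point.
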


In addition to their interpretation in terms of peeling and conformal scattering, our results may also be seen as a fixed background stability result in the spirit of Friedrich, Svedberg and Ringstr\"om  \cite{Friedrich1986,Friedrich1991,Svedberg2011,Ringstrom2008}. It is worth mentioning that the estimates we prove here are explicit, allowing us to define the sets of scattering data and read off precise decay rates.

Since the nonlinearities are of the same order, in principle there is no obstruction to extending our estimates to the Yang--Mills--Higgs system on de Sitter space. As a result, the same scattering and decay results should apply there.

\section{Conventions} \label{sec:conventions}

We use the spacetime signature $(+,-,-,-)$. Our main estimates will be performed on the \emph{Einstein cylinder} $\mathfrak{E} = \mathbb{R} \times \mathbb{S}^3$ with metric $\mathfrak{e} = g_{\mathbb{R}} \oplus (- \mathfrak{s}_3)$, where $\mathfrak{s}_3 = g_{\mathbb{S}^3}$ is the standard positive-definite metric on $\mathbb{S}^3$. We will use Penrose's abstract index notation and use the Roman indices $a,b,\dots$ to refer to tensors on $\mathfrak{E}$ and contractions with respect to the full spacetime metric $\mathfrak{e}$ (or sometimes a general spacetime $\mathcalboondox{M}$ with metric $g$), and use the Greek indices $\mu,\nu,\dots$ to refer to tensors on $\mathbb{S}^3$ and contractions with respect to the metric $\mathfrak{s}_3$. At a certain point we will also use the indices $i$, $j$ and $k$ to refer to a basis of vector fields on $\mathbb{S}^3$, but this will be made explicit at the time. We will use $\nabla$ to denote the Levi--Civita connection of the full spacetime metric $\mathfrak{e}$ (or a general metric $g$), and $\slashgrad$ to denote the Levi--Civita connection of $\mathfrak{s}_3$. Thus, as $\mathfrak{e} = g_{\mathbb{R}} \oplus (- \mathfrak{s}_3 ) = 1 \oplus ( - \mathfrak{s}_3 )$, we shall have $\nabla = \nabla^{\mathbb{R}} \oplus \nabla^{\mathfrak{s}_3} = \partial \oplus  \slashgrad$. We will use $\dvol$ to denote the volume form of the full spacetime metric ($\mathfrak{e}$ or $g$), and $\dvol_{\mathfrak{s}_3}$ to denote the volume form of $\mathfrak{s}_3$. In the case of $(\mathfrak{E}, \mathfrak{e})$ we will thus have $\dvol = \d \tau \wedge \dvol_{\mathfrak{s}_3}$, $\tau$ being the coordinate on $\mathbb{R}$. For a $1$-form $A$ on $\mathfrak{E}$ we will use $\mathbf{A}$ to denote the projection of $A$ onto $\mathbb{S}^3$, $A_0$ to denote the component of $A$ along $\partial_\tau$, and dot (as in $\dot{\mathbf{A}}$) to denote differentiation with respect to $\tau$. The Lebesgue and Sobolev norms $L^p$ and $H^m$ of a scalar or vector will refer to $L^p(\mathbb{S}^3)$ and $H^m(\mathbb{S}^3)$, unless specifically stated otherwise. Occasionally we shall use the symbol $\equalhat$ to denote equality \emph{on} null infinity $\scri$ (see \Cref{sec:deSitter}).

We will also adopt Penrose's sign convention for the curvature tensors, meaning that the Riemann curvature tensor $R^c_{\phantom{c}dab}$ will satisfy
\[ [\nabla_a, \nabla_b ] X^c = - R^c_{\phantom{c}dab} X^d. \] The Ricci tensor and the scalar curvature will then be defined as usual,
\[ R_{ab} \defeq R^c_{\phantom{c}acb}, \qquad R \defeq R_a^{\phantom{a}a}, \]
so that in these conventions the scalar curvature of, for example, a $3$-sphere with the \emph{positive-definite} metric $\mathfrak{s}_3$ will be \emph{negative}, $-6$ to be exact. However, since our metrics will be of signature $(+,-,-,-)$, that will mean that a spacelike $3$-sphere in our construction will have positive scalar curvature equal to $6$.

\section{The Conformally Invariant Maxwell-Scalar Field System} \label{sec:confinvariantsystem}

Let $(\mathcalboondox{M}, g)$ be a $4$-dimensional Lorentzian manifold and consider the  Lagrangian density
\begin{equation} \label{conformallyinvariantlagrangian} \mathcal{L} = - \frac{1}{4} F_{ab} F^{ab} + \frac{1}{2} \D_a \phi \overline{\D^a \phi} - \frac{1}{12} R | \phi |^2,
\end{equation}
where $F_{ab} = 2 \nabla_{[a} A_{b]} $ is a real $2$-form called the Maxwell field, $A_a$ is a real $1$-form called the Maxwell potential, $\phi$ is a complex scalar field on $\mathcalboondox{M}$, $R$ is the scalar curvature of $g_{ab}$, and $\D_a \phi = \nabla_a \phi + i A_a \phi$, where $\nabla_a$ is the Levi--Civita connection of $g_{ab}$. The differential operator $\D_a$ is called the gauge covariant derivative. The Euler--Lagrange equations associated to \eqref{conformallyinvariantlagrangian} are
\begin{equation} \label{phiAequation1} \nabla^b F_{ab} = \Im \left( \bar{\phi} \D_a \phi \right) \quad \text{and} \quad \D^a \D_a \phi + \frac{1}{6} R \phi = 0. \end{equation}
The Maxwell-scalar field system \eqref{conformallyinvariantlagrangian} is the simplest classical field theory exhibiting a non-trivial gauge dependence. Indeed, the $1$-form $A_a$ is not uniquely determined by the $2$-form $F_{ab}$, and any transformation of the form
\[ A_a \longmapsto A_a + \nabla_a \chi \]
leaves $F_{ab}$ unchanged. This transforms 
\[ \D_a \phi = \nabla_a \phi + i A_a \phi \longmapsto \nabla_a \phi + i (A_a + \nabla_a \chi) \phi = \mathrm{e}^{-i \chi} \D_a ( \mathrm{e}^{i \chi} \phi), \]
so that if one makes the corresponding transformation
\[ \phi \longmapsto \mathrm{e}^{-i \chi} \phi, \]
the Lagrangian \eqref{conformallyinvariantlagrangian}, and thus also the field equations \eqref{phiAequation1}, remain unchanged.

\begin{remark} The gauge covariant derivative $\D_a$ acting on $\phi$ is a connection on a principal bundle $P$ over $\mathcalboondox{M}$ with fibre $\mathrm{U}(1)$. This connection is represented by the real $1$-form $A_a$ on $\mathcalboondox{M}$ in any trivialisation of $P$, where the factor of $i$ in $\D_a$ comes from $\mathfrak{u}(1) = i \mathbb{R}$. The scalar field $\phi$ is a section of a complex line bundle over $\mathcalboondox{M}$ associated to $P$ by the representation $\mathrm{e}^{i \chi}$ of $\mathrm{U}(1)$.
\end{remark}

Consider a conformal rescaling of $(\mathcalboondox{M}, g)$,
\begin{equation} \label{conformalrescaling} \hat{g}_{ab} = \Omega^2 g_{ab}. \end{equation}
It turns out that in many cases it is possible to fully or partially compactify $\mathcalboondox{M}$ by choosing the conformal factor $\Omega$ so that it compensates for the divergence of distances with respect to the physical metric $g$ and attach the boundary $\scri \defeq \{ \Omega = 0 \}$ to $\mathcalboondox{M}$; this is Roger Penrose's notion of asymptotically simple spacetimes first described around 1963 in \cite{Penrose1963} and \cite{Penrose1965}.  For our purposes it will be sufficient to assume that the spacetime $\mathcalboondox{M}$ is regular enough so that it may be compactified in this way to make a smooth compact manifold with boundary, $\widehat{\mathcalboondox{M}} \defeq \mathcalboondox{M} \cup \scri$, although weaker, partial compactifications leaving singularities at a finite number of points in the boundary are widely used to study, for example, black hole spacetimes \cite{Joudioux2010,MasonNicolas2004,MasonNicolas2007,Mokdad2017,Nicolas2013,Nicolas2015}. We equip $\widehat{\mathcalboondox{M}}$ with the rescaled (also called unphysical) metric $\hat{g}_{ab}$ and call the spacetime $(\widehat{\mathcalboondox{M}},\hat{g})$ the rescaled spacetime.

It is possible to transport the fields $(A_a,\phi)$ into the rescaled spacetime $\widehat{\mathcalboondox{M}}$ by weighting them appropriately by the conformal factor $\Omega$ so that the field equations \eqref{phiAequation1} are preserved in $\widehat{\mathcalboondox{M}}$. The correct choice of conformal weights for $(A_a, \phi)$ are $(0,-1)$,
\[ \hat{A}_a \defeq A_a, \qquad \quad \hat{\phi} \defeq \Omega^{-1} \phi, \]
and we show below that this implies the conformal invariance of the Maxwell-scalar field system \eqref{phiAequation1}. Under the rescaling \eqref{conformalrescaling} the Christoffel symbols $\Gamma^a_{bc}$ of $g_{ab}$ transform as
\[ \hat{\Gamma}^a_{bc} = \Gamma^a_{bc} + \Upsilon_c \delta^a_b + \Upsilon_b \delta^a_c - \Upsilon_d g^{ad} g_{bc}, \]
where $\Upsilon_a \defeq \Omega^{-1} \partial_a \Omega = \partial_a \log \Omega$, and using this one calculates that
\[ -\frac{1}{4} F_{ab} F^{ab} = - \frac{1}{4} \Omega^4 \hat{F}_{ab} \hat{F}^{ab} \]
and
\[ \frac{1}{2} \D_a \phi \overline{\D^a \phi} = \frac{1}{2} \Omega^{4} \hat{\D}_a \hat{\phi}\overline{\hat{\D}^a \hat{\phi}} + \frac{1}{2} \Omega^4 \left( 2 \Upsilon_a \operatorname{Re} (\hat{\phi} \overline{\hat{\D}^a \hat{\phi}}) + \hat{g}^{ab} \Upsilon_a \Upsilon_b |\hat{\phi}|^2 \right). \]
Moreover, because in $4$ dimensions the scalar curvature $R$ transforms as (see \cite{spinorsandspacetime2}, eq. (6.8.25))
\[ \frac{1}{12} R = \Omega^2 \left( \frac{1}{12} \hat{R} - \frac{1}{2} \hat{\nabla}^a \Upsilon_a + \frac{1}{2} \hat{g}^{ab} \Upsilon_a \Upsilon_b \right), \]
one has
\[ - \frac{1}{12} R |\phi |^2 = - \frac{1}{12} \Omega^4 \hat{R} | \hat{\phi} |^2 + \frac{1}{2} \Omega^4 \left( \hat{\nabla}^a \Upsilon_a - \hat{g}^{ab} \Upsilon_a \Upsilon_b \right) |\hat{\phi}|^2. \]
Adding these together one sees that the Lagrangian transforms as
\begin{align*} \mathcal{L} &= \Omega^4 \hat{\mathcal{L}} + \frac{1}{2} \Omega^4 \left( 2 \Upsilon_a \operatorname{Re}(\hat{\phi} \overline{\hat{\D}^a \hat{\phi}}) + (\hat{\nabla}^a \Upsilon_a ) |\hat{\phi}|^2 \right) \\ 
&= \Omega^4 \hat{\mathcal{L}} + \frac{1}{2} \Omega^4 \left( \Upsilon_a \hat{\nabla}^a ( | \hat{\phi} |^2) + (\hat{\nabla}^a \Upsilon_a )|\hat{\phi}|^2 \right) \\
&= \Omega^4 \hat{\mathcal{L}} + \frac{1}{2} \Omega^4 \hat{\nabla}^a ( |\hat{\phi}|^2 \Upsilon_a ).
\end{align*}
Now the volume form $\widehat{\dvol}$ of $\widehat{\mathcalboondox{M}}$ is related to the volume form $\dvol$ of $\mathcalboondox{M}$ by $\dvol = \Omega^{-4} \widehat{\dvol}$, so the action 
\[ S = \int_{\mathcalboondox{M}}  \mathcal{L} \dvol \]
transforms as
\begin{equation} \label{conformalchangeofaction} S = \hat{S} + \frac{1}{2} \int_{\widehat{\mathcalboondox{M}}} \hat{\nabla}^a ( | \hat{\phi}|^2 \Upsilon_a ) \, \widehat{\dvol} = \hat{S} + \frac{1}{2} \int_{\scri} |\hat{\phi}|^2 \Upsilon_a \hat{g}^{ab} \partial_b \intprod \widehat{\dvol}.
\end{equation}
In other words, $S$ is conformally invariant up to a boundary term. Since the Euler-Lagrange equations arise from a \emph{local} variation of the action, this implies the conformal invariance of the field equations \eqref{phiAequation1}.

\section{De Sitter Space} \label{sec:deSitter}

\subsection{Global Coordinates and Conformal Compactification}

The $(3+1)$-dimensional de Sitter space $\mathrm{dS}_4$ is defined to be the hyperboloid
\[ |x|^2 - x_0^2 = \frac{1}{H^2} \]
in $(4+1)$-dimensional Minkowski space
\[ m = \d x_0^2 - \d |x|^2 - |x|^2 \mathfrak{s}_3 , \]
where $|x| = \sqrt{x_1^2 + x_2^2 + x_3^2 + x_4^2}$ and $\mathfrak{s}_3$ is the standard metric on the $3$-sphere $\{ |x|=1 \}$. If we set
\[ x_0 = \frac{1}{H} \sinh\left( H \eta \right), \qquad \quad |x| = \frac{1}{H} \cosh \left( H \eta \right), \]
so that $\eta$ is a coordinate on $\mathrm{dS}_4$, the metric $m$ descends to the metric $\d s^2$ on $\mathrm{dS}_4$,
\begin{equation} \label{closedslicing} \d s^2 = \d \eta^2 - \frac{1}{H^2} \cosh^2 \left( H \eta \right) \mathfrak{s}_3. \end{equation}
This provides a global coordinate system on $\mathrm{dS}_4$ and is known as the closed slicing of de Sitter space. Note that the $\mathbb{R} \times \mathbb{S}^3$ topology is manifest in these coordinates. The metric \eqref{closedslicing} can be visualized as a compact spacelike slice expanding in time $\eta$, as depicted in \cref{fig:expandingdesitter}.
\begin{figure}[h]
\centering
	\begin{tikzpicture}
	\centering
	\node[inner sep=0pt] (expandingdesitter) at (2,0)
    	{\includegraphics[width=.18\textwidth]{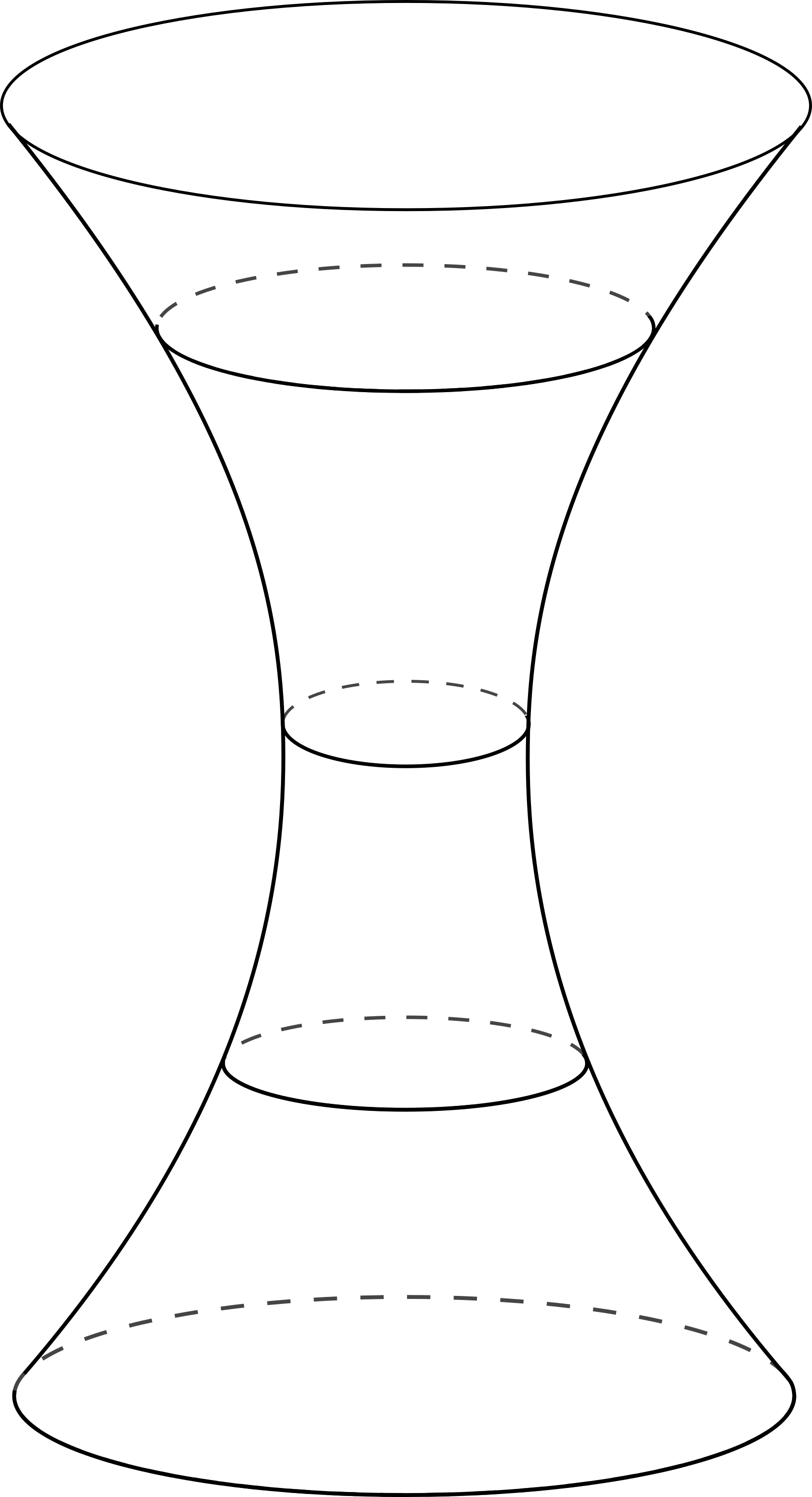}};

	\node[label={[shift={(4.2,-0.4)}]$\eta$}] {};
	\draw[->] (3.9,-0.3) .. controls (3.9,-1) .. (3.9,0.7);

	\end{tikzpicture}
	\caption{The closed slicing of $\mathrm{dS}_4$.} \label{fig:expandingdesitter}	
\end{figure}

\noindent To conformally compactify $\mathrm{dS}_4$, however, we need a further change of coordinates
\[ \tan\left( \frac{\tau}{2} \right) = \tanh \left( \frac{H \eta}{2} \right). \]
In terms of $\tau$ the metric becomes
\begin{equation} \label{unrescaleddeSitter} \d s^2 = \frac{1}{ H^2 \cos^2 \tau } \left( \d \tau^2 - \mathfrak{s}_3 \right), \end{equation}
where $- \pi/2 < \tau < \pi/2$. This makes it obvious as to what should be taken as the conformal factor $\Omega$ to compactify $\mathrm{dS}_4$, namely
\[ \Omega = H \cos \tau, \]
and we define
\begin{equation} \label{EinsteinCylindermetric} \d \hat{s}^2 \defeq \Omega^2 \d s^2 = \d \tau^2 - \mathfrak{s}_3 \eqdef \mathfrak{e}. \end{equation}
In this conformal scale the hypersurfaces $\{ \tau = \pm \pi/2 \}$ are regular, in contrast to the physical metric \eqref{unrescaleddeSitter}. In fact, the metric $\mathfrak{e}$ clearly extends smoothly for all $\tau \in \mathbb{R}$, so one may consider the extended spacetime $(\mathfrak{E}, \mathfrak{e}) \defeq (\mathbb{R} \times \mathbb{S}^3, \mathfrak{e})$ known as the Einstein cylinder. We thus identify compactified de Sitter space $\widehat{\mathrm{dS}}_4$ with the subset $\left[ - \pi/2, \, \pi/2 \right] \times \mathbb{S}^3$ of the Einstein cylinder $\mathfrak{E}$ by attaching to \eqref{unrescaleddeSitter} the boundary $\scri \defeq \{ \Omega = 0\} = \{ | \tau | = \pi/2 \}$. This boundary is the union of two disjoint smooth surfaces
\[ \scri^+ = \left\{ \tau = \frac{\pi}{2} \right\} \quad \text{and} \quad \scri^- = \left\{ \tau = - \frac{\pi}{2} \right\}, \]
which we call \emph{future null infinity} and \emph{past null infinity} respectively. Note that $\scri^\pm$ are \emph{spacelike} hypersurfaces of $\mathfrak{E}$; the name \emph{null} infinity derives from the fact that $\scri^\pm$ is where all future (past) pointing null geodesics in de Sitter space end up at infinity. Note also that the vector field $T \defeq \partial/\partial \tau$ is a timelike Killing field in $\mathfrak{E}$, and in particular it is automatically uniformly timelike since $\mathfrak{E}$ is spatially compact.
\begin{figure}[h]
\centering
	\begin{tikzpicture}
	\centering
	\node[inner sep=0pt] (einsteincylinder) at (-2,0)
    	{\includegraphics[width=.18\textwidth]{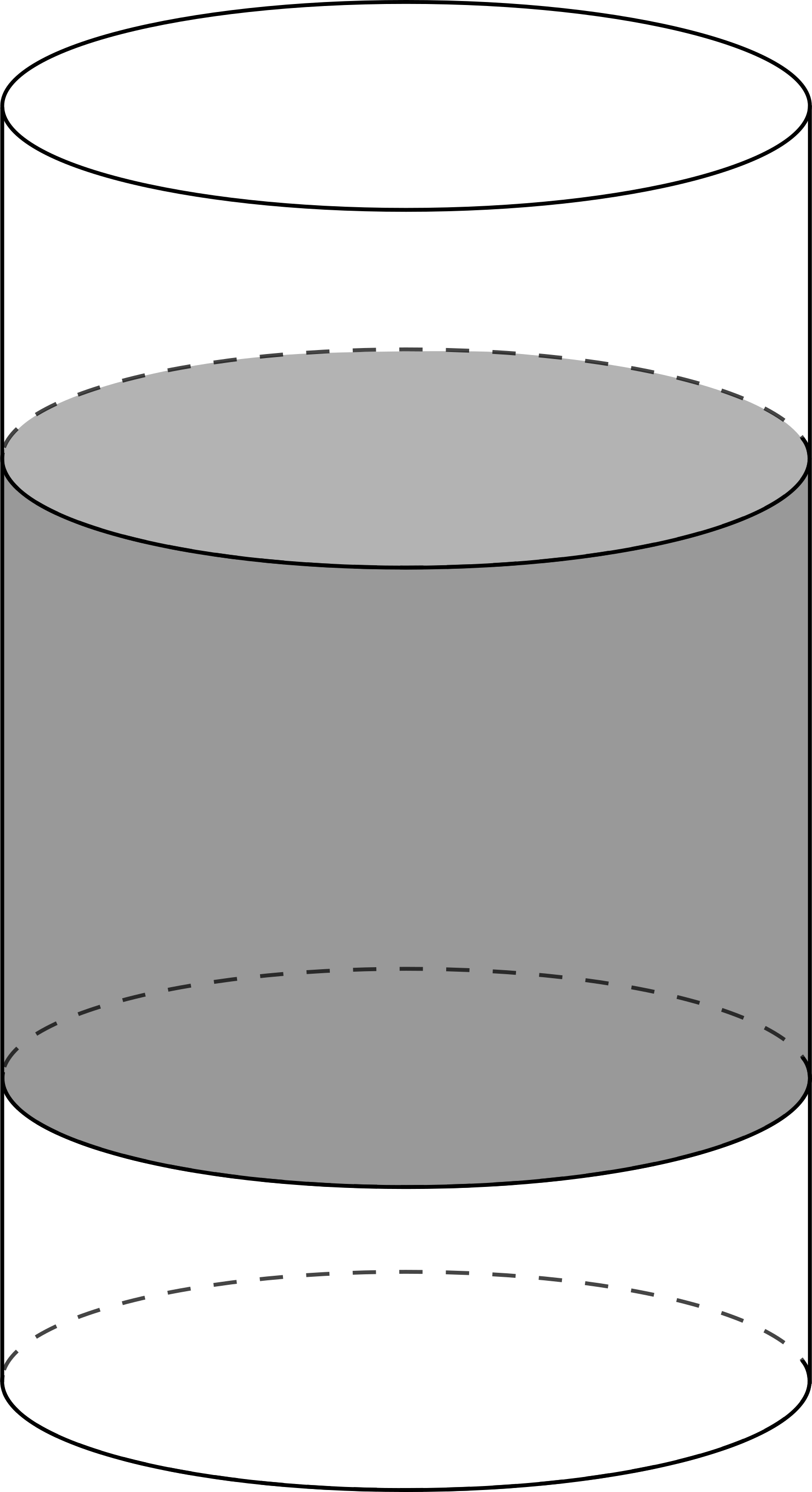}};

	\node[label={[shift={(0.6,-0.4)}]$\tau$}] {};
	\draw[->] (0.3,-0.3) .. controls (0.3,-1) .. (0.3,0.7);
	\node[label={[shift={(-4.3,1.3)}]$\scri^+$}] {};
	\draw[->] (-4.4,1.4) .. controls (-4.6, 0.9) and (-2.1,1.5) .. (-2,0.7);
	\node[label={[shift={(-4.3,-0.9)}]$\scri^-$}] {};
	\draw[->] (-4.4,-0.8) .. controls (-4.6, -1.3) and (-2.1,-0.7) .. (-2,-1.5);

	\end{tikzpicture}
	\caption{Compactified de Sitter space $\widehat{\mathrm{dS}}_4$ in the Einstein cylinder $\mathfrak{E}$.} \label{fig:einsteincylinder}	
\end{figure}

\noindent As a result, $T$ provides a uniformly spacelike foliation of $\mathfrak{E}$ by the level surfaces of the coordinate $\tau$ given explicitly by $\mathcal{F} =  \{ \mathbb{S}^3_\tau \defeq \mathbb{S}^3 \times \{ \tau \} : \tau \in \mathbb{R} \}$. Our energies will be defined with respect to $\mathcal{F}$.

\begin{remark} The fact that $\scri$ is spacelike is, of course, a consequence of the fact that $\mathrm{dS}_4$ is a solution to Einstein's equations with a positive cosmological constant $\lambda$,
\[ R_{ab} = \lambda g_{ab}. \]
Indeed, in general the norm squared \emph{on $\scri$} of the normal to $\scri$ is 
\[ (\nabla_a \Omega) (\nabla^a \Omega) \equalhat \frac{1}{3} \lambda. \]
In the case of $\mathrm{dS}_4$, $\lambda= 3 H^2$ so that $(\nabla \Omega )^2 \equalhat H^2 > 0$. Note that $H$ corresponds to the Hubble constant in vacuum.
\end{remark}

Writing the $3$-sphere metric as $\mathfrak{s}_3 = \d \zeta^2 + (\sin^2 \zeta) \mathfrak{s}_2$ for $\zeta \in [0, \pi]$ and quotienting by the $\mathrm{SO}(3)$ symmetry group of $\mathfrak{s}_2$ we obtain the Penrose diagram for $\mathrm{dS}_4$,
\begin{figure}[h]
\centering
	\begin{tikzpicture}
	\centering
	\node[inner sep=0pt] (desitterpenrose) at (3.4,0)
    	{\includegraphics[width=.24\textwidth]{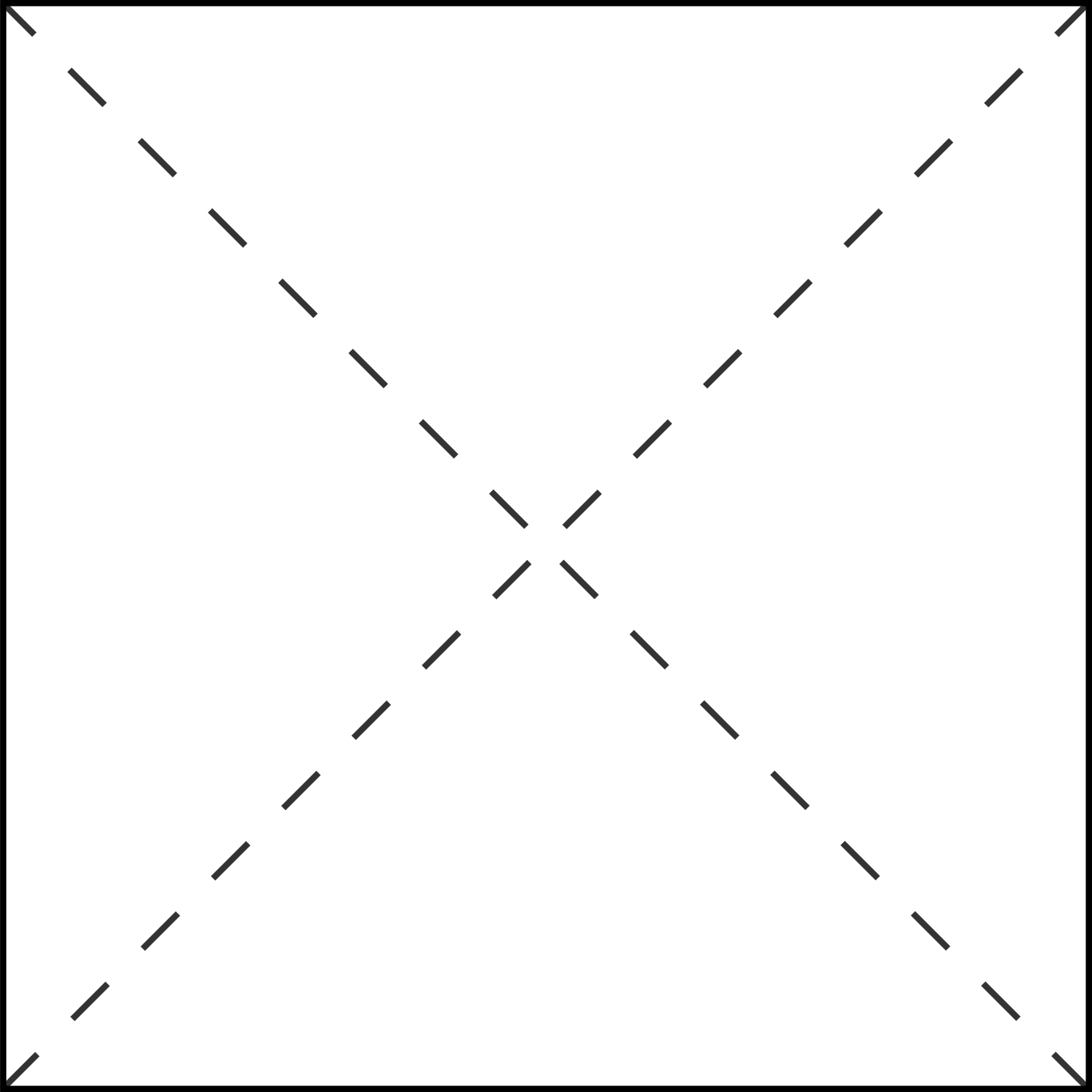}};

	\node[label={[shift={(0.2,-0.4)}]North Pole}] {};
	\node[label={[shift={(6.6,-0.4)}]South Pole}] {};
	\node[label={[shift={(3.4,1.9)}]$\scri^+$}] {};
	\node[label={[shift={(3.4,-2.7)}]$\scri^-$}] {};
	\node[label={[shift={(4.5,-0.4)}] I }] {};
	\node[label={[shift={(3.4,0.6)}] II }] {};
	\node[label={[shift={(2.4,-0.4)}] III }] {};
	\node[label={[shift={(3.4,-1.4)}] IV }] {};

	\end{tikzpicture}
	\caption{The Penrose diagram for $\mathrm{dS}_4$.} \label{fig:desitterpenrose}	
\end{figure}

\noindent The coordinate $\zeta$ varies from $0$ to $\pi$ going from left to right, with the vertical lines $\{ \zeta = 0 \}$ and $\{ \zeta = \pi \}$ representing the North Pole and the South Pole respectively. The coordinate $\tau$ varies from $-\pi/2$ to $\pi/2$ going up, with the horizontal lines $\{ \tau = - \pi/2 \} $ and $\{ \tau = \pi/2 \}$ representing past and future null infinities $\scri^\pm$, as remarked earlier. The dashed lines are the past and future horizons for an observer at the South Pole: a classical observer sitting at $\{ \zeta = \pi \}$ can never observe the region $\mathrm{II} \cup \mathrm{III}$, and can never send a signal to the region $\mathrm{III} \cup \mathrm{IV}$. Thus region I is the region of communications for an observer at the South Pole, while region III is completely inaccessible.

\subsection{Static Coordinates}

A set of physical space coordinates on $\mathrm{dS}_4$ that exhibit an explicit future-pointing timelike Killing field in the region $\mathrm{I}$ may be constructed by defining
\[ r = \frac{\sin \zeta}{H \cos \tau}, \qquad \quad \tanh(Ht) = \frac{\sin \tau}{\cos \zeta} \]
for $\tau \in (-\pi/2, \pi/2)$ and $\zeta \in (0, \pi)$. Then the unrescaled $\mathrm{dS}_4$ metric takes the form
\begin{equation} \label{physicaldSmetricstatic} \d s^2 = F(r) \d t^2 - F(r)^{-1} \d r^2 - r^2 \mathfrak{s}_2, \end{equation}
where $F(r) = (1 - H^2 r^2)$. In these coordinates the cosmological horizons represented by the dashed lines in \cref{fig:desitterpenrose} are given by $\{ r = 1/H \}$, $\scri^\pm$ are given by $\{ r = \infty \}$, the North and South Poles are at $\{ r = 0 \}$, and the four corners of the Penrose diagram are at $\{ t = \pm \infty \}$. The vector field $\partial/\partial t$ is manifestly a timelike Killing vector in the region $\{ r < 1/H \}$, but becomes null on the cosmological horizon $\{ r=1/H \}$. It is future-pointing in the region $\mathrm{I}$, past-pointing in the region $\mathrm{III}$, and spacelike in the regions $\mathrm{II}$ and $\mathrm{IV}$. The arrows in \cref{fig:desitterpenrosestatic} represent the directions of the flow of $\partial/\partial t$.
\begin{figure}[h]
\centering
	\begin{tikzpicture}
	\centering
	\node[inner sep=0pt] (desitterpenrose2) at (3.4,0)
    	{\includegraphics[width=.24\textwidth]{images/desitterpenrose.png}};

	\node[label={[shift={(0.8,-0.4)}]$r=0$}] {};
	\node[label={[shift={(6,-0.4)}]$r=0$}] {};
	\node[label={[shift={(3.4,1.9)}]$r=\infty$}] {};
	\node[label={[shift={(3.4,-2.7)}]$r=\infty$}] {};
	\node[label={[shift={(5.8,-2.75)}]$t=-\infty$}] {};
	\node[label={[shift={(1.2,-2.7)}]$t=+\infty$}] {};
	\node[label={[shift={(5.8,1.85)}]$t=+\infty$}] {};
	\node[label={[shift={(1.2,1.9)}]$t=-\infty$}] {};
	\draw[->] (5,-1.2) .. controls (4, -0.2) and (4,0.2) .. (5,1.2);
	\draw[->] (1.8,1.2) .. controls (2.8, 0.2) and (2.8,-0.2) .. (1.8,-1.2);
	\draw[->] (2.25, 1.6) .. controls (3.25, 0.6) and (3.65, 0.6) .. (4.65,1.6);
	\draw[<-] (2.25, -1.6) .. controls (3.25,-0.6) and (3.65,-0.6) .. (4.65,-1.6);

	\end{tikzpicture}
	\caption{Static coordinates on $\mathrm{dS}_4$.} \label{fig:desitterpenrosestatic}	
\end{figure}

\subsection{Choice of Energy-Momentum Tensor on \texorpdfstring{$\mathfrak{E}$}{E}}

From now on we denote by $\phi$ and $A_a$ the scalar field and Maxwell potential on the Einstein cylinder $\mathfrak{E}$, and by $\tilde{\phi}$ and $\tilde{A}_a$ the conformally related physical fields on de Sitter space $\mathrm{dS}_4$,
\begin{equation} \label{conformalweights} \phi = \Omega^{-1} \tilde{\phi}, \qquad \quad A_a = \tilde{A}_a, \end{equation}
where $\Omega = H \cos \tau$.

We define the energy-momentum tensor for the system \eqref{phiAequation1} on $\mathfrak{E}$ to be
\begin{align} \begin{split} \label{chosenenergymomentum} \mathbf{T}_{ab}[\phi,A] & \defeq - F_{ac} F_b^{\phantom{b}c} + \frac{1}{4} \mathfrak{e}_{ab} F_{cd} F^{cd} + \overline{\D_{(a} \phi} \D_{b)} \phi - \frac{1}{2}\mathfrak{e}_{ab} \overline{\D_c \phi} \D^c \phi + \frac{1}{2} \mathfrak{e}_{ab} |\phi|^2 \\
& \defeq \mathbf{T}_{ab}[A] + \mathbf{T}_{ab}[\phi].
\end{split} \end{align}
One can check by direct calculation that, as a consequence of the field equations \eqref{phiAequation1}, $\mathbf{T}_{ab}$ is conserved,
\[ \nabla^a \mathbf{T}_{ab} = 0, \]
so $\mathbf{T}_{ab}$ is suitable for defining a conserved energy for the system \eqref{phiAequation1},
\begin{equation} \label{fullgeometricenergy} \mathcal{E}_\tau[\phi,A] \defeq \int_{\mathbb{S}^3_\tau} \mathbf{T}_{00}[\phi,A] \dvol_{\mathfrak{s}_3} = \int_{\mathbb{S}^3_\tau} \mathbf{T}_{ab}[\phi,A] T^a T^b \dvol_{\mathfrak{s}_3}. \end{equation}
Since $T^a$ is Killing on $\mathfrak{E}$, this clearly satisfies
\[ \frac{\d }{\d \tau} \mathcal{E}_\tau[\phi,A] = 0 \]
if the field equations \eqref{phiAequation1} are satisfied. We call \eqref{fullgeometricenergy} the \emph{geometric} energy for the system \eqref{phiAequation1}. We also define the geometric energies for the individual sectors of the scalar field $\phi$ and the Maxwell potential $A_a$,
\[ \mathcal{E}_\tau[\phi] \defeq \int_{\mathbb{S}^3_\tau} \mathbf{T}_{00}[\phi] \dvol_{\mathfrak{s}_3}, \qquad \quad \mathcal{E}_\tau[A] \defeq \int_{\mathbb{S}^3_\tau} \mathbf{T}_{00}[A] \dvol_{\mathfrak{s}_3}. \]
The sectorial geometric energies $\mathcal{E}_\tau[\phi]$ and $\mathcal{E}_\tau[A]$ are not conserved individually and can exchange energy throughout the evolution, but of course the total geometric energy $\mathcal{E}_\tau[\phi,A] = \mathcal{E}_\tau[\phi] + \mathcal{E}_\tau[A]$ is. For $m \geq 1$ we also define the Sobolev-type approximate energies
\begin{align*} & \mathrm{S}_m[\phi] \defeq \|\dot{\phi} \|^2_{H^{m-1}} + \| \phi \|^2_{H^m}, && \mathrm{S}_m[A] \defeq \mathrm{S}_m[\mathbf{A}] + \mathrm{S}_m[A_0], \\
& \mathrm{S}_m[\mathbf{A}] \defeq \| \dot{\mathbf{A}} \|^2_{H^{m-1}} + \| \mathbf{A} \|^2_{H^m}, && \mathrm{S}_m[\phi, \mathbf{A}] \defeq \mathrm{S}_m[\phi] + \mathrm{S}_m[\mathbf{A}], \\ 
& \mathrm{S}_m[A_0] \defeq \| A_0 \|^2_{H^m}, && \mathrm{S}_m[\phi, A] \defeq \mathrm{S}_m[\phi, \mathbf{A}] + \mathrm{S}_m[A_0],
\end{align*}
where $H^0 = L^2$. Furthermore, for brevity we will often simply write $\mathrm{S}_m$ to mean $\mathrm{S}_m[\phi, A]$.

\subsection{Scaling of Initial Energies}

We will consider initial data on the hypersurface $\{ \tau = 0 \} = \{ \eta = 0 \}$ and use the coordinate $\tau$ and the metric $\mathfrak{e}$ on the rescaled spacetime, and the coordinate $\eta$ and the metric \eqref{closedslicing} on the physical spacetime. By differentiating the relationship $\tan(\tau/2) = \tanh(H \eta/2)$ we find
\[ \d \tau = \frac{H}{\cosh(H \eta)} \d \eta, \]
so raising indices with $\mathfrak{e}^{-1} = \Omega^{-2} g^{-1}$, where $g$ is the metric \eqref{closedslicing}, we find that $\partial_\tau$ and $\partial_\eta$ are related by
\[ \partial_\tau = \frac{\cosh(H \eta)}{H} \partial_\eta. \]
Furthermore, the conformal factor $\Omega$ in the global coordinates \eqref{closedslicing} is given by
\[ \Omega = H \cos \tau = \frac{H}{\cosh(H \eta)}. \]  
Consider the rescaled energies
\[ \mathrm{S}_m[\phi, A](\tau) = \| \dot{\phi} \|^2_{H^{m-1}}(\tau) + \| \phi \|^2_{H^m}(\tau) + \| \dot{\mathbf{A}} \|^2_{H^{m-1}}(\tau) + \| \mathbf{A} \|^2_{H^m}(\tau) + \| A_0 \|^2_{H^m}(\tau). \]
On the initial surface $\{ \tau = 0 \} = \{ \eta = 0 \}$ the conformal factor is a constant and has vanishing derivative, $\partial_\tau \Omega|_{\tau = 0} = 0$, so the rescaled scalar field $\phi$ is related to the physical scalar field $\tilde{\phi}$ by
\[ \phi|_{\tau = 0 } = (\Omega^{-1} \tilde{\phi} )|_{\tau = 0} = \frac{1}{H} \tilde{\phi}|_{\eta = 0}, \]
while their time derivatives are related by
\[ \dot{\phi}|_{\tau = 0} = ( \Omega^{-1} \partial_\tau \tilde{\phi} - (\partial_\tau \Omega)\Omega^{-2} \tilde{\phi} )|_{\tau = 0} = \frac{1}{H^2} \partial_\eta \tilde{\phi}|_{\eta = 0}. \]
Since the conformal factor is independent of the $\mathbb{S}^3$ coordinates, $\slashgrad \Omega = 0$, and the metric induced on $\{ \eta = 0 \}$ by \eqref{closedslicing} is equivalent to $\mathfrak{s}_3$, the rescaled and physical norms of the scalar field are equivalent,
\[ \| \dot{\phi} \|^2_{H^{m-1}}(\tau = 0) + \| \phi \|^2_{H^m}(\tau = 0) \simeq \| \partial_\eta \tilde{\phi} \|^2_{H^{m-1}}(\eta = 0) + \| \tilde{\phi} \|^2_{H^m}(\eta = 0), \]
where there is equality if $H=1$. One similarly checks that
\begin{align*} & \| \dot{\mathbf{A}} \|^2_{H^{m-1}}(\tau = 0) + \| \mathbf{A} \|^2_{H^m}(\tau = 0) \simeq \| \partial_\eta  \tilde{\mathbf{A}} \|^2_{H^{m-1}}(\eta = 0) + \| \tilde{\mathbf{A}} \|^2_{H^m}(\eta = 0)
\end{align*}
and
\[ \| A_0 \|^2_{H^m}(\tau = 0) \simeq \| \tilde{A}_\eta \|^2_{H^m}(\eta = 0),  \]
where $A_0 \d \tau + \mathbf{A}_\mu \d x^\mu = A = \tilde{A} = \tilde{A}_\eta \d \eta + \tilde{\mathbf{A}}_\mu \d x^\mu$, and $x^\mu$ are coordinates on $\mathbb{S}^3$. Thus
\begin{equation} \label{scalingofenergies} \mathrm{S}_m[\phi, \mathbf{A}](\tau = 0) \simeq \mathrm{S}_m[\tilde{\phi}, \tilde{\mathbf{A}}](\eta = 0), \end{equation}
and also $\mathrm{S}_m[A_0](\tau=0) \simeq \mathrm{S}_m[\tilde{A}_\eta](\eta=0)$.

\section{Main Theorems} \label{sec:maintheorems}

\begin{definition} Let $\tilde{\Sigma}$ be a Cauchy surface in $\mathrm{dS}_4$ and consider data for the Maxwell-scalar field system on $\Sigma$ the corresponding Cauchy surface in $\widehat{\mathrm{dS}}_4$. We say the data
\[ (\phi_0, \mathbf{A}_0, \phi_1, \mathbf{A}_1, a_0) = (\phi, \mathbf{A}, \dot{\phi}, \dot{\mathbf{A}}, A_0)|_{\Sigma} \]
is \emph{admissible} if it satisfies the strong Coulomb gauge\footnote{See \Cref{sec:strongCoulombgauge}.} and $a_0$ solves the elliptic equation
\[ - \slashed{\Delta} a_0 + | \phi_0 |^2 a_0 = -\operatorname{Im}(\bar{\phi}_0 \phi_1) \]
on $\Sigma$. 
\end{definition}

\begin{theorem}[Energy Estimates] \label{thm:estimates} Let $m \in \mathbb{N}$. For sufficiently $\mathrm{S}_m[\phi, \mathbf{A}]$-small admissible data on $\Sigma$ for the Maxwell-scalar field system on $\widehat{\mathrm{dS}}_4 \simeq \mathbb{S}^3 \times [-\pi/2, \pi/2]$ in strong Coulomb gauge one has
\[ \mathrm{S}_m[\phi, A](0) \simeq \mathrm{S}_m[\phi, A](\tau) \]
for all $\tau \in [-\pi/2, \pi/2]$. In particular,
\[ \mathrm{S}_m[\phi,A](\scri^-) \simeq \mathrm{S}_m[\phi, A](\scri^+), \]
where $\scri^\pm = \{ \tau = \pm \pi/2 \}$ is the future (past) null infinity of de Sitter space $\mathrm{dS}_4$.
\end{theorem}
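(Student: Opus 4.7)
The plan is to work on the Einstein cylinder $\mathfrak{E}$, where the conformal invariance of the system reduces everything to a regular Cauchy problem on a fixed, spatially compact background with uniformly timelike Killing vector $T = \partial_\tau$. Because the closed interval $[-\pi/2, \pi/2]$ has finite length, and because all geometric quantities on $(\mathfrak{E},\mathfrak{e})$ are bounded, any local-in-$\tau$ energy inequality with polynomial nonlinearity can be promoted to a global estimate by a continuity/bootstrap argument on $\tau$. The first step is therefore to write the system in strong Coulomb gauge so that $\slashgrad^\mu \mathbf{A}_\mu = 0$ and $A_0$ is determined at each time $\tau$ by an elliptic equation of the form $-\slashed{\Delta} A_0 + |\phi|^2 A_0 = -\Im(\bar{\phi}\dot{\phi})$, obtained from the Gauss constraint. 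Standard elliptic theory on $\mathbb{S}^3$ then controls $\|A_0\|_{H^{m}}$ in terms of $\mathrm{S}_m[\phi,\mathbf{A}]$ plus cubic corrections, so the estimate on $A_0$ is slaved to the hyperbolic estimate on $(\phi,\mathbf{A})$.

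Next I would handle the base level. The geometric energy $\mathcal{E}_\tau[\phi,A]$ is conserved because $T^a$ is Killing and $\nabla^a \mathbf{T}_{ab}=0$; together with the gauge condition and Sobolev embedding $H^2(\mathbb{S}^3) \hookrightarrow L^\infty(\mathbb{S}^3)$ this yields $\mathcal{E}_\tau[\phi,A] \simeq \mathrm{S}_1[\phi,\mathbf{A}]$ up to terms of the form $\|A\|_{L^\infty}\|\phi\|_{H^1}^2$ arising from the gauge covariant derivative $\D_a\phi = \nabla_a\phi + iA_a\phi$. Provided $\mathrm{S}_2$ is small, these error terms can be absorbed and the conserved $\mathcal{E}_\tau$ gives $\mathrm{S}_1[\phi,\mathbf{A}](\tau) \simeq \mathrm{S}_1[\phi,\mathbf{A}](0)$; combining with the elliptic bound on $A_0$ closes the $\mathrm{S}_1$ level given control of $\mathrm{S}_2$.

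For the inductive step I would commute the field equations \eqref{phiAequation1} on $\mathfrak{E}$ with spatial derivatives $\slashgrad^k$ for $1 \leq k \leq m-1$, obtaining, for each $k$, a linear wave equation for $(\slashgrad^k\phi, \slashgrad^k\mathbf{A})$ with source terms that are products of derivatives of order $\leq k$. The higher-order geometric energy applied to these commuted quantities is not conserved, but its $\tau$-derivative is a spatial integral of products of $\D\phi$, $F$, $A$ and their derivatives, which by Sobolev algebra properties on $\mathbb{S}^3$ is estimated by a polynomial $P(\mathrm{S}_m[\phi,\mathbf{A}])$ of degree $\geq 2$ in $\mathrm{S}_m$. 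Integrating on $[0,\tau]$ and invoking the elliptic bound for $A_0$ inside each source term produces an inequality of the shape
\[
\mathrm{S}_m[\phi,A](\tau) \leq C\,\mathrm{S}_m[\phi,A](0) + C\int_0^\tau P\bigl(\mathrm{S}_m[\phi,A](s)\bigr)\,\d s,
\]
to which the nonlinear Grönwall inequality advertised in the abstract applies, yielding $\mathrm{S}_m[\phi,A](\tau) \simeq \mathrm{S}_m[\phi,A](0)$ uniformly on $[-\pi/2,\pi/2]$ provided the initial data is sufficiently small, with analogous treatment for $\tau<0$. The equivalence at $\scri^\pm$ follows by continuously extending the energies to $\tau = \pm\pi/2$, which is legitimate because $\mathfrak{e}$ extends smoothly across $\scri$ and the uniform bound precludes blow-up.

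The hard part I expect is the interplay between the elliptic equation for $A_0$ and the hyperbolic estimates for $(\phi,\mathbf{A})$ at the top order $m$: the right-hand side $-\Im(\bar\phi\dot\phi)$ has $\dot\phi$ in it, so $m$-th derivatives of $A_0$ require $(m-1)$-th derivatives of $\dot\phi$, which is exactly borderline in $\mathrm{S}_m$. This forces the induction to be set up so that elliptic gains are used with the correct derivative count, and the cubic feedback terms $|\phi|^2 A_0$ must be carefully tracked so that smallness of $\mathrm{S}_m[\phi,\mathbf{A}]$ suffices to close $\mathrm{S}_m[A_0]$ without circular loss. The other delicate point is ensuring that the degree of the polynomial $P$ and the implicit constants do not degenerate as $m$ grows, which is handled by using Moser-type estimates on $\mathbb{S}^3$ and exploiting the subcritical scaling of the cubic nonlinearity noted after the statement of Theorem 2.
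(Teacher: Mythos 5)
Your overall strategy --- conformal reduction to the Einstein cylinder, strong Coulomb gauge with $A_0$ slaved to an elliptic equation on each slice, exact conservation of the geometric energy at the base level, commuted spatial derivatives plus a nonlinear Gr\"onwall inequality at higher orders --- is exactly the paper's. Two concrete steps, however, are either wrong as written or missing.

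First, your base case does not close under the stated hypotheses. The theorem asserts the $m=1$ estimate assuming only $\mathrm{S}_1[\phi,\mathbf{A}](0)$ small, but you absorb the cross terms coming from $\D_a\phi=\nabla_a\phi+iA_a\phi$ via $\|A\|_{L^\infty}$, i.e.\ via $H^2(\mathbb{S}^3)\hookrightarrow L^\infty(\mathbb{S}^3)$, which requires $\mathrm{S}_2$ smallness you do not have at that level. The paper instead splits $\|\mathbf{A}\psi\|_{L^2}\leq\|\mathbf{A}\|_{L^6}\|\psi\|_{L^3}$ and uses $H^1(\mathbb{S}^3)\hookrightarrow L^6(\mathbb{S}^3)$, so that the equivalence $\mathcal{E}_\tau[\phi]\simeq\mathrm{S}_1[\phi]$ needs only $\mathrm{S}_1[A]$ small (\Cref{prop:gradslashphibound}--\Cref{prop:D0phismall}, \Cref{thm:scalarsobolevandphysicalequivalence}); this subcritical splitting is what makes the $H^1$ estimate self-contained and is also what allows the induction at order $m+1$ to assume only the order-$m$ conclusion.

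Second, the quantity that actually obstructs the commuted energy estimate is not $\slashgrad^m A_0$ but $\slashgrad^m\dot{A}_0$: the Maxwell operator in Coulomb gauge contains $\slashgrad_\mu\dot{A}_0$, so the error terms $\mathsf{e}_{m+1}$ contain $\|\dot{A}_0\|^2_{H^m}$, and $\dot{A}_0$ is controlled by no energy, since $\mathrm{S}_m[A_0]=\|A_0\|^2_{H^m}$ carries no time derivative. The paper's key auxiliary result (\Cref{prop:A0dotH1bound} and \cref{generalA0dotbound}) differentiates the elliptic constraint in $\tau$, producing $\ddot{\phi}$ on the right-hand side, then eliminates $\ddot{\phi}$ using the wave equation for $\phi$ --- which itself contains $i\dot{A}_0\phi$, so the resulting feedback term $\|\dot{A}_0\|^2_{H^1}\mathrm{S}_1[\phi]$ must be absorbed by smallness of $\mathrm{S}_1[\phi]$ to break the circularity. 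You correctly flag the elliptic--hyperbolic interplay for $A_0$ as the hard part, but without this specific mechanism the polynomial bound on the error, and hence the Gr\"onwall step, cannot be established.
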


\begin{theorem}[Scattering for Small Data] \label{thm:scatteringtheory} For $m \geq 2$ let $\mathrm{S}^0_m$ be the subset of $H^m(\Sigma)^2 \times H^{m-1}(\Sigma)^2 \times H^m(\Sigma)$ of distributions $u_0$ of admissible data on $\Sigma$ and let $\mathrm{S}^\pm_m$ be the subset of $H^m(\scri^\pm)^2 \times H^{m-1}(\scri^\pm)^2 \times H^m(\scri^\pm)$ of distributions $u^\pm$ of admissible data on $\scri^\pm$ of de Sitter space, all equipped with the natural norm $\sqrt{\mathrm{S}_m}$. Denote by $B_\epsilon$ the open ball of radius $\epsilon$ in $(H^m)^2 \times (H^{m-1})^2 \times H^m$, and write $\mathrm{S}_{m,\epsilon}^0 = \mathrm{S}_m^0 \cap B_\epsilon$ and $\mathrm{S}^{\pm}_{m,\epsilon} = \mathrm{S}^\pm_m \cap B_{\epsilon}$. Then for every $m \geq 2$ there exist $\epsilon_0, \epsilon_1 > 0$, $0 < \delta \ll 1$ and sets $\mathscr{D}^\pm_{m,\epsilon_1}$ with $ \mathrm{S}^\pm_{m, \delta} \subset \mathscr{D}^\pm_{m,\epsilon_1} \subset \mathrm{S}^\pm_{m,\epsilon_1}$ such that
\begin{enumerate}[(i)] 

\item there exist bounded invertible nonlinear operators $\mathfrak{T}^\pm_m$, called the forward and backward wave operators
\[ \mathfrak{T}^\pm_m : \mathrm{S}^0_{m,\epsilon_0} \longrightarrow \mathscr{D}^\pm_{m, \epsilon_1} \subset \mathrm{S}^\pm_{m,\epsilon_1}, \]
such that $u^\pm = \mathfrak{T}^\pm_m(u_0)$ is the forward (backward) Maxwell-scalar field development of $u_0$ on $\widehat{\mathrm{dS}}_4$ restricted to $\scri^\pm$, and

\item there exists a bounded invertible nonlinear scattering operator
\[ \mathscr{S}_m: \mathscr{D}^-_{m, \epsilon_1} \longrightarrow \mathscr{D}^+_{m, \epsilon_1} \]
given by 
\[ \mathscr{S}_m = \mathfrak{T}_m^+ \circ (\mathfrak{T}_m^-)^{-1} \]
such that $u^+ = \mathscr{S}_m(u^-)$ is the Maxwell-scalar field development of $u^-$ on $\widehat{\mathrm{dS}}_4$ restricted to $\scri^+$.
\end{enumerate}
\end{theorem}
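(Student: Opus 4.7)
The plan is to use the energy estimates of \Cref{thm:estimates} together with the well-posedness theory from \Cref{sec:wellposedness} to construct $\mathfrak{T}^\pm_m$ as the ``evolution to $\scri^\pm$'' maps, and then to invert them by solving the Cauchy problem from $\scri^\pm$ in the rescaled spacetime $\mathfrak{E}$. Concretely, given admissible $u_0 \in \mathrm{S}^0_{m,\epsilon_0}$, the well-posedness theorem produces a unique solution $(\phi, A)$ of the Maxwell-scalar field system in strong Coulomb gauge on $\widehat{\mathrm{dS}}_4$, and I would define
\[ \mathfrak{T}^\pm_m(u_0) \defeq (\phi, \mathbf{A}, \dot{\phi}, \dot{\mathbf{A}}, A_0)\big|_{\scri^\pm}. \]
By \Cref{thm:estimates} one has $\mathrm{S}_m(\mathfrak{T}^\pm_m(u_0)) \leq C_m \, \mathrm{S}_m(u_0)$ with a constant $C_m$ independent of $u_0$, and choosing $\epsilon_1 \defeq \sqrt{C_m}\,\epsilon_0$ gives the mapping property into $\mathrm{S}^\pm_{m,\epsilon_1}$. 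Upgrading this bound into Lipschitz continuity (for the ``bounded, nonlinear'' statement) amounts to running the same energy method on the difference of two nearby solutions: the difference solves a linear system with source terms that are quadratic in the solutions and linear in the difference, and because the underlying nonlinearity is $H^1$-subcritical in four dimensions, the same Gr\"onwall argument closes.

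For invertibility, the key observation emphasized in the introduction is that $\scri^\pm$ is \emph{spacelike} in the Einstein cylinder $\mathfrak{E}$, so the inversion reduces to a regular Cauchy problem rather than a Goursat problem. I would therefore set up the Cauchy problem on $\scri^\pm$ and appeal to the same well-posedness machinery of \Cref{sec:wellposedness}, with the time coordinate $\tau$ running from $\pm\pi/2$ backward towards $0$. The energy estimates of \Cref{thm:estimates} apply symmetrically and imply that data of size $\delta$ on $\scri^\pm$ produce solutions on $\widehat{\mathrm{dS}}_4$ whose trace on $\Sigma$ has size $\lesssim \delta$; choosing $\delta$ small enough (in particular $\delta \leq \epsilon_0/\sqrt{C_m}$) then guarantees $\mathrm{S}^\pm_{m,\delta} \subset \mathscr{D}^\pm_{m,\epsilon_1}$, where $\mathscr{D}^\pm_{m,\epsilon_1}$ is defined as the image $\mathfrak{T}^\pm_m(\mathrm{S}^0_{m,\epsilon_0})$ inside $\mathrm{S}^\pm_{m,\epsilon_1}$. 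Uniqueness of the Cauchy evolution in both time directions yields $\mathfrak{T}^\pm_m \circ (\mathfrak{T}^\pm_m)^{-1} = \operatorname{id}$ on $\mathscr{D}^\pm_{m,\epsilon_1}$.

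The scattering operator is then the composition
\[ \mathscr{S}_m \defeq \mathfrak{T}^+_m \circ (\mathfrak{T}^-_m)^{-1} : \mathscr{D}^-_{m,\epsilon_1} \longrightarrow \mathscr{D}^+_{m,\epsilon_1}, \]
and its boundedness and invertibility follow immediately from those of $\mathfrak{T}^\pm_m$, with inverse $(\mathscr{S}_m)^{-1} = \mathfrak{T}^-_m \circ (\mathfrak{T}^+_m)^{-1}$. The uniform smallness of the norms along the whole evolution ensures that the compositions stay within the domains on which the wave operators are defined.

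The main obstacle, in my view, is not the energy estimate itself (that is \Cref{thm:estimates}) but rather the careful bookkeeping of the \emph{admissibility} condition at $\scri^\pm$: one must verify that the strong Coulomb gauge and the elliptic relation $-\slashed{\Delta} a_0 + |\phi_0|^2 a_0 = -\operatorname{Im}(\bar{\phi}_0 \phi_1)$ at $\Sigma$ are propagated correctly to $\scri^\pm$ (so that the image $\mathscr{D}^\pm_{m,\epsilon_1}$ is characterized intrinsically at $\scri^\pm$), and conversely that admissible data at $\scri^\pm$ give rise, under backward evolution, to data at $\Sigma$ that again satisfy the gauge and constraint. This requires checking that the constraint equations (in particular the elliptic equation for $A_0$) and the gauge-fixing condition are preserved by the Maxwell-scalar field flow in strong Coulomb gauge, which is a standard but nontrivial consistency verification; it is this preservation, together with the smallness required by the nonlinear Gr\"onwall inequality of the higher-order estimates, that dictates the inclusions $\mathrm{S}^\pm_{m,\delta} \subset \mathscr{D}^\pm_{m,\epsilon_1} \subset \mathrm{S}^\pm_{m,\epsilon_1}$ with potentially strict inequality on both sides.
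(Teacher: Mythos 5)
Your proposal follows essentially the same route as the paper: define $\mathfrak{T}^\pm_m$ as restriction of the Cauchy development to $\scri^\pm$, use \Cref{thm:estimates} for boundedness, exploit the spacelike nature of $\scri^\pm$ to invert by backward Cauchy evolution together with uniqueness, take $\mathscr{D}^\pm_{m,\epsilon_1}$ to be the image (which contains a small ball by the reverse estimate), and compose to get $\mathscr{S}_m$. The only divergence is your aside on upgrading to Lipschitz continuity via estimates on differences of solutions, which the theorem does not require and which the paper explicitly defers as a separate, more delicate issue owing to the lack of vector space structure on the admissible data sets.
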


\begin{theorem}[Small Data Decay Rates] \label{thm:decayrates} Let $\tilde{\phi} = \Omega \phi$ and $\tilde{A}_a = A_a$ be the physical fields related to the conformally rescaled fields $\phi$ and $A_a$ by \cref{conformalweights}. Suppose $\mathrm{S}_2[\tilde{\phi}, \tilde{\mathbf{A}}]$ is small initially. Then the Maxwell-scalar field development $(\tilde{\phi}, \tilde{A})$ of this initial data satisfies the estimates
\begin{align*} &  | \tilde{\phi} | \la \e^{-H |\eta|}, && | \tilde{A}_\eta | \la \e^{-H |\eta|}, &&| \tilde{\mathbf{A}} |_{\mathfrak{s}_3} \la 1	
\end{align*}
as $|\eta| \to \infty$. Furthermore, in the static coordinates \cref{physicaldSmetricstatic}
\begin{align*} & |\tilde{\phi}| \la_r \e^{-H|t|}, && | \tilde{A}_t | \la_r \e^{-H|t|}, && |\tilde{A}_r | \la_r \e^{-H|t|}, && \frac{1}{r}| \tilde{A} |_{\mathfrak{s}_2} \la_r \e^{-H|t|}	
\end{align*}
as $|t| \to \infty$ and $r$ is fixed. Moreover, if $\mathrm{S}_3[\tilde{\phi}, \tilde{\mathbf{A}}]$ is small initially then there exists a constant $c$ such that
\[ \tilde{\phi} \sim c \tilde{\Phi}_1 + \mathcal{O}\left( \e^{-2Ht} \right) \]
as $t \to + \infty$, where $\tilde{\Phi}_1 = F(r)^{-1/2} \e^{-Ht}$ is a solution of the linear uncharged conformally invariant wave equation on $\mathrm{dS}_4$.
\end{theorem}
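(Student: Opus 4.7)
\bigskip

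\noindent\textbf{Proof proposal.} The plan is to derive all of the claimed decay rates as direct consequences of \Cref{thm:estimates} at the level $m=2$ (and $m=3$ for the asymptotic expansion), using Sobolev embedding on $\mathbb{S}^3$ and the conformal relations \eqref{conformalweights} to convert uniform-in-$\tau$ bounds on the Einstein cylinder into exponential decay in the physical time variables $\eta$ and $t$.

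First I would observe that by \eqref{scalingofenergies} smallness of $\mathrm{S}_2[\tilde{\phi},\tilde{\mathbf{A}}]$ at $\eta=0$ is equivalent to smallness of $\mathrm{S}_2[\phi,\mathbf{A}]$ at $\tau=0$, and that the gauge constraint determining $a_0$ produces admissible Cauchy data in the sense of \Cref{sec:maintheorems}. Applying \Cref{thm:estimates} with $m=2$ then yields a uniform bound $\mathrm{S}_2[\phi,A](\tau)\lesssim\mathrm{S}_2[\phi,A](0)$ on the whole closed interval $\tau\in[-\pi/2,\pi/2]$. The Sobolev embedding $H^2(\mathbb{S}^3)\hookrightarrow L^\infty(\mathbb{S}^3)$ (and its one-derivative version used through $\dot\phi,\dot{\mathbf A}\in H^1$) then produces pointwise bounds $|\phi|,|A_0|,|\mathbf A|_{\mathfrak s_3}\lesssim 1$ uniformly on $\widehat{\mathrm{dS}}_4$. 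Converting to the physical fields using $\tilde\phi=\Omega\phi$, $\tilde A_a=A_a$, and $\Omega=H/\cosh(H\eta)$, the scalar estimate $|\tilde\phi|\lesssim\Omega\lesssim\e^{-H|\eta|}$ is immediate; for $\tilde A_\eta$ I would use the change-of-basis relation $\d\tau=(H/\cosh(H\eta))\,\d\eta$, which implies $\tilde A_\eta=\Omega A_0$ and hence $|\tilde A_\eta|\lesssim\e^{-H|\eta|}$; the spatial component bound $|\tilde{\mathbf A}|_{\mathfrak s_3}\lesssim 1$ follows because the spatial components are conformally invariant and $\mathfrak s_3$ is intrinsic to the sphere.

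For the static coordinate estimates, I would compute the Jacobian of the map $(\tau,\zeta)\mapsto(t,r)$ defined by $r=\sin\zeta/(H\cos\tau)$ and $\tanh(Ht)=\sin\tau/\cos\zeta$, and expand each component of $\partial_t$, $\partial_r$ in terms of $\partial_\tau,\partial_\zeta$. Fixing $r$ and letting $|t|\to\infty$ one finds $\cos\tau\to 0$ and $\sin\zeta\to 0$, with precise rate extracted by equating $\tanh(Ht)\simeq 1-2\e^{-2H|t|}$ to the Taylor expansion of $\sin\tau/\cos\zeta$ in $\delta=\pi/2-|\tau|$ and using $\sin\zeta=Hr\cos\tau$. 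This gives $\delta\simeq 2F(r)^{-1/2}\e^{-H|t|}$ and $\Omega\simeq 2HF(r)^{-1/2}\e^{-H|t|}$, from which $|\tilde\phi|\lesssim_r\e^{-H|t|}$ follows directly. The components $\tilde A_t$, $\tilde A_r$ and the spatial piece $r^{-1}|\tilde A|_{\mathfrak s_2}$ are linear combinations of $A_0$ and the components of $\mathbf{A}$ with coefficients given by the Jacobian, and tracking those coefficients through the above expansion (together with the uniform bound on $A_0$ and $\mathbf A$) produces the stated $\e^{-H|t|}$ rate with constants depending on $r$.

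Finally, for the asymptotic expansion statement I would upgrade to $m=3$ in \Cref{thm:estimates}, which gives $\phi\in H^3(\mathbb{S}^3)$ uniformly in $\tau$ with $\dot\phi\in H^2$, hence $\phi\in C^1_{\tau,x}$ on $\widehat{\mathrm{dS}}_4$ by Sobolev embedding. Writing $\phi(\tau,x)=\phi(\pi/2,x)+O(\delta)$ at any fixed point of $\scri^+$, and using $\tilde\phi=\Omega\phi$ together with the expansions $\Omega\simeq 2HF(r)^{-1/2}\e^{-Ht}$ and $\Omega\delta=O(\e^{-2Ht})$ from the previous paragraph, I would obtain
\[
\tilde\phi=\Omega\,\phi|_{\scri^+}+O(\Omega\delta)=c\,F(r)^{-1/2}\e^{-Ht}+O(\e^{-2Ht})=c\,\tilde\Phi_1+O(\e^{-2Ht}),
\]
where $c$ is $2H$ times the (fixed-$r$) limiting value of $\phi$ at the corner of $\scri^+$ being approached. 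The main obstacle is the third step: the coordinate inversion $(\tau,\zeta)\leftrightarrow(t,r)$ is singular on $\scri^+$, and one must show that the Jacobian factors appearing in $\tilde A_t,\tilde A_r,|\tilde A|_{\mathfrak s_2}$ do not destroy the $\e^{-H|t|}$ rate; this requires the uniform $L^\infty$ control coming from $\mathrm{S}_2$, not only the energy-type bounds, which is exactly why \Cref{thm:estimates} is applied at the level $m=2$ (and $m=3$ for the sharper asymptotic identification of the leading eigenmode $\tilde\Phi_1$).
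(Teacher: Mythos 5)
Your proposal is correct, and for the first two groups of estimates it follows essentially the same route as the paper: transfer smallness via \cref{scalingofenergies}, apply \Cref{thm:estimates} with $m=2$, use $H^2(\mathbb{S}^3)\hookrightarrow L^\infty$ to get uniform continuity of $\phi$, $\mathbf{A}$, $A_0$ up to $\scri^\pm$, and then read the decay off the conformal factor and the Jacobian of the static coordinate change (the paper writes these Jacobian relations out explicitly; your $\tilde A_\eta=\Omega A_0$ and $\Omega\simeq 2HF(r)^{-1/2}\e^{-H|t|}$ agree with its formulas, and the only cosmetic difference is that for $r^{-1}|\tilde A|_{\mathfrak{s}_2}$ the angular components do not mix under the coordinate change --- the rate comes purely from the factor $\sin\zeta=\sech(Ht)HrF_t(r)^{-1/2}$ in the $\mathfrak{s}_3$-norm, as in the paper).

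Where you genuinely diverge is the $m=3$ asymptotic expansion. The paper does \emph{not} Taylor-expand at the corner: it shows that $\partial_\zeta\phi$ and $(\sin\zeta)^{-1}|\nabla^{\mathfrak{s}_2}\phi|$ have continuous traces on $\scri^+$, converts these to the bounds $|\partial_t\tilde\phi|,|\partial_r\tilde\phi|\la_r\e^{-Ht}$ and $|r^{-1}\nabla^{\mathfrak{s}_2}\tilde\phi|\la_r\e^{-2Ht}$, and then derives a pair of limiting relations for $\tilde\varphi=\e^{Ht}\tilde\phi$ at $t=+\infty$, namely $\partial_t\tilde\varphi\approx 0$ and $H^2r\tilde\varphi\approx F(r)\partial_r\tilde\varphi$, whose solution is $\tilde\varphi\approx F(r)^{-1/2}\tilde\varphi(0)$. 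Your argument instead observes that every fixed-$r$ curve terminates at the single corner point $p_+=(\tau,\zeta)=(\pi/2,\pi)$ of $\widehat{\mathrm{dS}}_4$, so that $\tilde\phi=\Omega\phi(p_+)+\Omega\cdot O(\mathrm{dist})$ with $\Omega=2HF(r)^{-1/2}\e^{-Ht}+O_r(\e^{-3Ht})$; the $F(r)^{-1/2}$ profile is then forced by the conformal factor alone. Both give the same answer with $c=2H\phi(p_+)$. Your route is more elementary and makes the $O(\e^{-2Ht})$ error transparent, and it gets the $r$-independence of $c$ and the spherical symmetry of the profile for free from the fact that all curves hit the same point; the paper's route buys the identification of the profile as a solution of the limiting radial ODE, i.e.\ as the eigenmode $\tilde\Phi_1$, and uses the angular bound \eqref{delS2phi} to justify that the limit is $\mathbb{S}^2$-independent. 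One bookkeeping point to repair in your write-up: the expansion $\phi(\tau,x)=\phi(\pi/2,x)+O(\delta)$ only accounts for the $\tau$-variation, whereas along a fixed-$r$ curve the spatial point also moves, with $|\zeta-\pi|\sim Hr\cos\tau=O_r(\e^{-Ht})$; you therefore need the \emph{joint} Lipschitz estimate $|\phi-\phi(p_+)|\la \delta+|\zeta-\pi|$ near the corner, which $\mathrm{S}_3$ does supply ($\phi\in H^3\hookrightarrow C^1$ spatially and $\dot\phi\in H^2\hookrightarrow L^\infty$ uniformly in $\tau$), and which contributes an error of the same order $\Omega\cdot O_r(\e^{-Ht})=O_r(\e^{-2Ht})$, so the conclusion stands.
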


\section{Field Equations and Gauge Fixing} \label{sec:fieldequationsgaugefixing}

The field equations \eqref{phiAequation1} can be written out in terms of the Maxwell potential $A_a$,
\begin{align} \begin{split} \label{phiAequation2} & \Box A_a - \nabla_a (\nabla^b A_b) + R_{ab} A^b = - \Im \left( \bar{\phi} \D_a \phi \right), \\
& \Box \phi + 2iA_a \nabla^a \phi + \left( \frac{1}{6} R - A_a A^a + i \nabla^a A_a \right) \phi = 0.
\end{split}
\end{align}
We shall be commuting differential operators into these equations, so it will be convenient to introduce the operators representing their left-hand sides. For any $1$-form $\omega$ and any scalar field $\psi$ we set
\[ \mathfrak{M}(\omega)_a \defeq \Box \omega_a - \nabla_a (\nabla_b \omega^b) + R_{ab} \omega^b \quad \text{and} \quad \mathfrak{S}(\psi) \defeq \D^a \D_a \psi + \frac{1}{6} R \psi. \]
The system \eqref{phiAequation2} is then equivalent to 
\begin{equation} \label{abstractfieldequations} \mathfrak{M}(A)_a = - \operatorname{Im}\left( \bar{\phi} \D_a \phi \right) \quad \text{and} \quad \mathfrak{S}(\phi) = 0. \end{equation}
In the following sections we specialise to the case of the Einstein cylinder $(\mathfrak{E}, \mathfrak{e})$. As noted earlier, for ease of notation we will not hat any rescaled quantities on $\mathfrak{E}$ and instead denote the corresponding physical quantities on $\mathrm{dS}_4$ with a tilde, as in $\tilde{\phi}$ or $\tilde{A}_a$. For the metric $\mathfrak{e}$ we compute
\[ R = 6 \qquad \text{and} \qquad R_{ab} \d x^a \d x^b = -2 \mathfrak{s}_3 . \]

\subsection{Strong Coulomb Gauge} \label{sec:strongCoulombgauge}

We will work in the Coulomb gauge adapted to the foliation $\mathcal{F}$,
\begin{equation} \label{Coulombgauge} \slashgrad \cdot \mathbf{A} = 0, \end{equation}
but will also need to use the residual gauge freedom to fix the gauge fully. More precisely, given a solution $(A, \phi)$ to the Maxwell-scalar field system \eqref{phiAequation2}, a general gauge transformation sends $\phi \mapsto \mathrm{e}^{- i \chi} \phi $ and $A_a \mapsto A_a + \nabla_a \chi$, and \cref{Coulombgauge} is imposed by solving the elliptic equation
\[ \slashed{\Delta} \chi = - \slashgrad \cdot \mathbf{A} \]
on $\mathbb{S}^3_\tau$ for every fixed $\tau$. This does not determine $\chi$ uniquely: there is still the residual gauge freedom of $\chi \mapsto \chi + \chi_{\text{res.}}$, where $\chi_{\text{res.}}$ solves
\[ \slashed{\Delta} \chi_{\text{res.}} = 0 \]
on each $\mathbb{S}^3_\tau$. Because $\mathbb{S}^3$ is compact, the kernel of the Laplacian $\slashed{\Delta}$ is just the vector space of constant functions, i.e. those $\chi_{\text{res.}}$ which satisfy $\slashgrad \chi_{\text{res.}} = 0$, but the $\tau$ dependence in the $\chi_{\text{res.}}$ is still arbitrary. Thus in the Coulomb gauge we have the residual gauge freedom 
\begin{align*} & \phi \longmapsto \mathrm{e}^{-i \chi_{\text{res.}}(\tau)} \phi , && 
A_0 \longmapsto A_0 + \dot{\chi}_{\text{res.}}(\tau), && \mathbf{A} \longmapsto \mathbf{A},
\end{align*}
which allows one to choose
\[ \dot{\chi}_{\text{res.}}(\tau) = - \frac{1}{|\mathbb{S}^3|} \int_{\mathbb{S}^3} A_0 (\tau) \dvol_{\mathfrak{s}_3} \eqdef - \bar{A}_0(\tau) \]
and so impose the additional gauge condition
\[ \bar{A}_0(\tau) = 0. \]
This determines $\chi_{\text{res.}}$ up to the addition of a global constant, so there is very little remaining gauge freedom. Indeed, constants are irrelevant for the gauge transformation of $A_a$ and only impart a constant phase change in $\phi$, so we have now fixed the gauge as completely as possible. We call this stronger gauge fixing condition
\begin{equation} \label{strongCoulombgauge} \slashgrad \cdot \mathbf{A} = 0, \qquad \bar{A}_0 =0 \end{equation}
\emph{strong Coulomb gauge}. For us, the most useful feature of the strong Coulomb gauge will be the fact that in this gauge $A_0$ will obey the Poincar\'e inequality on each leaf $\mathbb{S}^3_\tau$ of $\mathcal{F}$,
\[ \| A_0 \|_{L^2}(\tau) \leq C \| \slashgrad A_0 \|_{L^2}(\tau). \]
In strong Coulomb gauge the field equations \eqref{phiAequation2} are equivalent to the system
\begin{align} \begin{split} \label{phiAequationsSCG} & \Box \phi + 2i A_0 \dot{\phi} - 2i \mathbf{A} \cdot \slashgrad \phi + (1 - A^2_0 + |\mathbf{A}|^2 + i \dot{A}_0 ) \phi = 0, \\
& \Box \mathbf{A} + (2 + |\phi|^2) \mathbf{A} = - \operatorname{Im}(\bar{\phi} \slashgrad \phi) + \slashgrad \dot{A}_0, \\ 
 - &\slashed{\Delta} A_0 + |\phi|^2 A_0 = - \operatorname{Im}(\bar{\phi} \dot{\phi}), \\
& \slashgrad \cdot \mathbf{A} = 0, \\
& \bar{A}_0(\tau) = 0.
\end{split}
\end{align}
We do not prescribe initial data on $A_0$ since it is non-dynamical: it is completely determined by $\phi$ and $\dot{\phi}$ via the elliptic equation on each slice of constant $\tau$. It is convenient to incorporate the constraint $\slashgrad \cdot \mathbf{A} = 0$ into the equations by projecting the equation for $\mathbf{A}$ onto divergence free $1$-forms on $\mathbb{S}^3$. Let $\mathcal{P}$ be this projection (see \Cref{sec:divergencefree1forms}); then since 
\[ \slashgrad \cdot \Box \mathbf{A} = \Box (\slashgrad \cdot \mathbf{A}) - 2 \slashgrad \cdot \mathbf{A} = 0 \]
and
\[ \operatorname{curl} \slashgrad \dot{A}_0 = 0, \]
applying $\mathcal{P}$ to the equation for $\mathbf{A}$ gives
\[ \Box \mathbf{A} + 2 \mathbf{A} + \mathcal{P}\left(|\phi|^2 \mathbf{A}\right) = - \mathcal{P} \left( \operatorname{Im} ( \bar{\phi} \slashgrad \phi ) \right). \]
Thus the system \eqref{phiAequationsSCG} is equivalent to 
\begin{align} \begin{split} \label{phiAequationsCGprojected} & \Box \phi + 2i A_0 \dot{\phi} - 2i \mathbf{A} \cdot \slashgrad \phi + (1 - A_0^2 + |\mathbf{A}|^2 + i \dot{A}_0) \phi = 0, \\
& \Box \mathbf{A} + 2 \mathbf{A} + \mathcal{P} \left( |\phi|^2 \mathbf{A} \right) = - \mathcal{P} \left( \operatorname{Im} ( \bar{\phi} \slashgrad \phi) \right), \\
 - &\slashed{\Delta} A_0 + |\phi|^2 A_0 = - \operatorname{Im} (\bar{\phi} \dot{\phi} ), \\
& \bar{A}_0(\tau) = 0,
\end{split}
\end{align}
provided one considers divergence-free initial data for $\mathbf{A}$ and $\dot{\mathbf{A}}$. Indeed, it is easily seen that $v = \slashgrad \cdot \mathbf{A}$ satisfies 
\[ \Box v = 0, \]
so $v \equiv 0$ whenever $v = 0$ and $\dot{v} = 0$ initially.

In addition to the restriction $\slashgrad \cdot \mathbf{A}_0 = 0 = \slashgrad \cdot \mathbf{A}_1$ on the initial data, the extra gauge condition $\bar{A}_0 = 0$ restricts the set of initial data further. Suppose we prescribe initial data $\phi(\tau = 0) = \phi_0$ and $\dot{\phi}(\tau = 0) = \phi_1$. We must then solve for $A_0(\tau = 0) = a_0$ by solving
\begin{equation} \label{ellipticequationa0} - \slashed{\Delta} a_0 + |\phi_0|^2 a_0 = - \operatorname{Im}(\bar{\phi}_0 \phi_1), \end{equation}
so we must choose the initial data so that this solution has $\bar{a}_0 = 0$. Because $A_0$ is non-dynamical, it is not possible to write down an evolution equation for $\bar{A}_0$, but the gauge $\bar{A}_0 = 0$ is propagated nonetheless. This can be seen by simply replacing all instances of $A_0$ in the system \eqref{phiAequationsCGprojected} with $A_0^{\circ} \defeq A_0 - \bar{A}_0$ and solving the system for $A_0^{\circ}$ in the space of mean zero functions. While $A_0$ need not part of the initial data (prescribing $(\phi, \mathbf{A}, \dot{\phi}, \dot{\mathbf{A}})|_\Sigma = (\phi_0, \mathbf{A}_0, \phi_1, \mathbf{A}_1)$ is enough), we can consider $A_0$ as part of the initial data if it is equal to the $a_0$ obtained by solving the elliptic equation initially.

We call data satisfying the above conditions \emph{admissible}.

\begin{remark} The condition $\bar{a}_0 = 0$ is a condition on the initial data for $\phi$ and can be seen explicitly as follows. Consider the operator
\[ L \defeq - \slashed{\Delta} + |\phi_0|^2 \]
on $\mathbb{S}^3$ and assume that $\phi_0$ is not identically zero (if it is, then the equation becomes $\slashed{\Delta} a_0 = 0$ and we can trivially choose the zero solution). We can classify the kernel of $L$ if the data $(\phi_0, \phi_1)$ is sufficiently regular, say $(\phi_0, \phi_1) \in H^2(\mathbb{S}^3) \times H^1(\mathbb{S}^3)$. Multiplying the equation $Lu = 0$ by $u$ and integrating we get
\[ \int_{\mathbb{S}^3} |\slashgrad u |^2 \dvol_{\mathfrak{s}_3} + \int_{\mathbb{S}^3} |\phi_0|^2 u^2 \dvol_{\mathfrak{s}_3} = 0, \]
so that $\slashgrad u = 0$. If $u \in H^2(\mathbb{S}^3) \hookrightarrow C^0(\mathbb{S}^3)$, continuity of $u$ and $\| \phi_0 u \|_{L^2} = 0$ imply that $u \equiv 0$. Thus as an operator from $H^2(\mathbb{S}^3)$ to $L^2(\mathbb{S}^3)$\footnote{For $\phi_0, u \in H^2(\mathbb{S}^3)$ it is easy to check that $|\phi_0|^2 u \in L^2(\mathbb{S}^3)$, so $L$ does indeed map into $L^2(\mathbb{S}^3)$.}, $L$ has trivial kernel. It follows from standard elliptic theory that the equation $L u = \psi $ has a unique solution $u \in H^2(\mathbb{S}^3)$ for $\psi \in L^2(\mathbb{S}^3)$, which we write as $u = L^{-1} \psi$. Since $(\phi_0, \phi_1) \in H^2(\mathbb{S}^3) \times H^1(\mathbb{S}^3)$ ensures\footnote{In fact, $H^2(\mathbb{S}^3) \cdot H^1(\mathbb{S}^3) \subset H^1(\mathbb{S}^3)$, by Sobolev Embedding.} that $\bar{\phi}_0 \phi_1 \in L^2(\mathbb{S}^3)$, we have
\[ a_0 = -L^{-1}\operatorname{Im}(\bar{\phi}_0 \phi_1) = (\slashed{\Delta} - |\phi_0|^2)^{-1} \operatorname{Im}(\bar{\phi}_0 \phi_1). \]
The requirement $\bar{a}_0 = 0$ may thus be written as the condition 
\begin{equation} \label{strongCGconstraint} \int_{\mathbb{S}^3} (\slashed{\Delta} - |\phi_0|^2)^{-1} \operatorname{Im}(\bar{\phi}_0 \phi_1) \dvol_{\mathfrak{s}_3} = 0 \end{equation}
on the initial data $(\phi_0, \phi_1)$.
\end{remark}

\begin{remark} If one defines the electric field $E_a \defeq F_{ab} T^b$, then the index $a = 0$ Maxwell's equation in \eqref{phiAequation1} reads
\[ \slashgrad \cdot \mathbf{E} = \operatorname{Im}(\bar{\phi} \D_0 \phi). \]
Integrating this over $\mathbb{S}^3$ shows that
\[ \int_{\mathbb{S}^3} \operatorname{Im}(\bar{\phi} \D_0 \phi) \dvol_{\mathfrak{s}_3} = 0. \]
In flat space the same observation imposes precise decay rates on the eletric field $\mathbf{E}$ at spatial infinity $i^0$ (and in particular implies a non-zero $r^{-2}$ term), so the source term $\operatorname{Im}(\bar{\phi} \D_0 \phi)$ is said to correspond to charge at $i^0$. Recent work by Yang and Yu \cite{YangYu2018} and Candy, Kauffman, and Lindblad \cite{CandyKauffmanLindblad2018} quantifies such non-zero charge decay rates of the Maxwell-scalar field system in flat space. In de Sitter space, due to the spatial compactness of the topology, there is no analogous behaviour.
\end{remark}

\begin{remark} The system \eqref{phiAequationsCGprojected} in principle exhibits the null structure of Klainerman and Machedon \cite{KlainermanMachedon1994}. However, their original null form estimates \cite{KlainermanMachedon1993} rely on the structure of the real numbers to use Fourier techniques, and are not immediately extendible to curved space.	
\end{remark}

\section{Well-Posedness} \label{sec:wellposedness}

We state a classical theorem, due to Choquet-Bruhat, and apply it to our case. It should be noted that the original theorem is slightly more general (for example, it considers the Dirac--Maxwell--Klein--Gordon system), but we do not wish to clutter the presentation with unnecessary details. Let $I$ be an interval in $\mathbb{R}$ and let
\[ E_m(\mathbb{S}^n \times I) \defeq \bigcap_{k=0}^m C^k_b ( I ; H^{m-k}(\mathbb{S}^n) ) \]
be the standard finite $m$-energy space for hyperbolic systems. The following theorem elucidates why first order (that is, $H^1$) energy estimates are insufficient to construct a scattering theory for the Maxwell-scalar field system and why $H^2$ estimates are good enough ($2 >3/2$).

\begin{theorem}[Y. Choquet-Bruhat, \cite{ChoquetBruhat1982}] \label{thm:ChoquetBruhatexistence} Consider the system \eqref{phiAequation1} on $\mathbb{S}^n \times \mathbb{R}$. Let $T$ be the timelike unit normal to $\mathbb{S}^n_\tau \defeq \mathbb{S}^n \times \{ \tau \}$, set $\mathbf{E}_a \defeq F_{ab} T^b = \nabla_a A_0 - \dot{A}_a$, and suppose that we are given data 
$a$, $\phi_0 \in H^m(\mathbb{S}_0^n)$ and $\mathbf{E}$, $\phi_1 \in H^{m-1}(\mathbb{S}_0^n)$ satisfying the constraint
\begin{equation} \label{constraint} \tag{$\dagger$} \slashgrad \cdot \mathbf{E} = a_0 |\phi_0|^2 + \operatorname{Im}(\bar{\phi}_0 \phi_1), \end{equation}
where $\slashgrad$ is the Levi--Civita connection on $\mathbb{S}^n_0$. Then there exists an interval $I_\sigma = (-\sigma, \sigma) \subset \mathbb{R}$ and $(A_a,\phi) \in E_m(\mathbb{S}^n \times I_\sigma)$ satisfying the system \eqref{phiAequation1} and the Lorenz gauge condition $\nabla_a A^a = 0$ such that 
\[ A|_{\mathbb{S}^n_0} = a, \quad F \cdot T |_{\mathbb{S}^n_0} = \mathbf{E}, \quad \phi|_{\mathbb{S}^n_0} = \phi_0, \quad \dot{\phi}|_{\mathbb{S}^n_0} = \phi_1 \]
if $ m >n/2$. The supremum of such numbers $\sigma > 0$ depends continuously on 
\[ M_1 = \| a \|_{H^m} + \|\phi_0\|_{H^m} + \| \phi_1 \|_{H^{m-1}} + \| \mathbf{E} \|_{H^{m-1}} \]
and tends to infinity as $M_1$ tends to zero. The solution $(A,\phi)$ is unique in $E_m(\mathbb{S}^n \times I_\sigma)$ up to gauge transformations preserving the Lorenz gauge.
\end{theorem}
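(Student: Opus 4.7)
The plan is to impose the Lorenz gauge $\nabla^a A_a = 0$, which reduces the system \eqref{phiAequation2} to the coupled semilinear wave system
\begin{align*}
\Box A_a + R_{ab} A^b &= -\operatorname{Im}(\bar\phi \D_a \phi), \\
\Box \phi + 2i A^a \nabla_a \phi + \bigl( \tfrac{1}{6} R - A^a A_a \bigr) \phi &= 0,
\end{align*}
and to solve this by a standard Picard iteration in the finite-energy space $E_m(\mathbb{S}^n \times I_\sigma)$. Before iterating I would complete the Cauchy data for the reduced system: the spatial components $\dot{A}_\mu|_{\tau = 0}$ are recovered from $\mathbf{E}_\mu = \slashgrad_\mu a_0 - \dot{A}_\mu$, while $\dot{A}_0|_{\tau=0} = \slashgrad \cdot \mathbf{a}$ is dictated by the Lorenz condition, producing a full set $(A_a, \dot{A}_a, \phi, \dot\phi)|_{\tau=0}$ in $H^m \times H^{m-1} \times H^m \times H^{m-1}$.

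For the iteration I would set $(A^{(0)}, \phi^{(0)}) = 0$ and solve, at each step, linear wave equations with the previous iterates substituted into the quadratic and cubic nonlinearities $\bar\phi \D_a \phi$, $A^a \nabla_a \phi$ and $A^a A_a \phi$. The hypothesis $m > n/2$ supplies the Sobolev embedding $H^m \hookrightarrow L^\infty$ and the Moser-type multiplicative estimate $\|fg\|_{H^{m-1}} \lesssim \|f\|_{H^m}\|g\|_{H^{m-1}}$, which suffice to close the $E_m$ energy estimate for the reduced wave equations on an interval $I_\sigma$ whose length depends only on $M_1$. A contraction estimate at the $E_1$ level gives convergence of the iteration, and a standard persistence-of-regularity argument preserves the $E_m$ bound on the limit.

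The conceptual core of the proof is propagation of the Lorenz gauge. Writing $\chi \defeq \nabla^a A_a$ and exploiting the identity $\nabla^a \mathfrak{M}(A)_a = \Box \chi$, valid in the Lorenz gauge since then $\mathfrak{M}(A)_a = \Box A_a + R_{ab} A^b$, the reduced Maxwell equation yields $\Box \chi = -\nabla^a J_a$, where $J_a \defeq \operatorname{Im}(\bar\phi \D_a \phi)$. A direct computation using $\mathfrak{S}(\phi) = 0$, which holds by construction, shows $\nabla^a J_a = 0$ and hence $\Box \chi = 0$. The choice of $\dot{A}_0|_{\tau=0}$ makes $\chi|_{\tau = 0} = 0$, and evaluating $\dot{\chi}|_{\tau=0}$ by inserting the $a = 0$ component of the reduced Maxwell equation at $\tau = 0$ and using the definition of $\mathbf{E}$ collapses it precisely to the constraint \eqref{constraint}; uniqueness for the homogeneous wave equation then forces $\chi \equiv 0$ on $I_\sigma$, so the original system \eqref{phiAequation1} is recovered.

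Lifetime, continuous dependence on $M_1$, and uniqueness up to Lorenz-preserving gauge transformations follow from the standard continuation criterion and from energy estimates for the difference of two solutions (two Lorenz-gauged solutions of the same initial data differ by $A \mapsto A + \nabla\chi$, $\phi \mapsto \e^{-i\chi}\phi$ with $\Box \chi = 0$ and $\chi|_{\tau = 0} = \dot\chi|_{\tau = 0} = 0$, hence $\chi \equiv 0$). I expect the main obstacle to be the careful bookkeeping of the gauge propagation argument: verifying the current conservation $\nabla^a J_a = 0$ as a consequence of $\mathfrak{S}(\phi) = 0$, and rearranging $\dot\chi|_{\tau=0}$ so that the initial constraint \eqref{constraint} is exactly what is needed to cancel the remaining terms, in tandem with the choice of $\dot{A}_0|_{\tau=0}$ dictated by the Lorenz condition.
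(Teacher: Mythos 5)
The paper does not actually prove this statement: it is quoted as a classical theorem of Choquet-Bruhat, cited from the literature and used as a black box (the only argument supplied in \Cref{sec:wellposedness} is the derivation of \Cref{cor:wellposedness} from it), so there is no in-paper proof to compare yours against. Your sketch is the standard route to the result --- Lorenz-gauge reduction to a semilinear wave system, Picard iteration closed by $H^m \hookrightarrow L^\infty$ and Moser-type product estimates for $m > n/2$, and propagation of the gauge via a wave equation for $\chi \defeq \nabla^a A_a$ with vanishing Cauchy data, the constraint \eqref{constraint} being precisely the statement $\dot{\chi}|_{\tau=0} = 0$ --- and the overall architecture is sound.

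One step is stated imprecisely and hides a small circularity. You claim $\mathfrak{S}(\phi) = 0$ ``holds by construction'' and deduce $\nabla^a J_a = 0$, hence $\Box \chi = 0$. But the reduced scalar equation you actually solve, $\Box \phi + 2i A^a \nabla_a \phi + (\tfrac{1}{6}R - A^a A_a)\phi = 0$, is $\mathfrak{S}(\phi) = i(\nabla^a A_a)\phi = i \chi \phi$, not $\mathfrak{S}(\phi) = 0$; the latter follows only once $\chi \equiv 0$ is known, which is what you are trying to prove. The identity $\nabla^a J_a = \Im\left(\bar{\phi}\, \D^a \D_a \phi\right)$ then yields $\nabla^a J_a = \chi |\phi|^2$ rather than zero, so the gauge-propagation equation is $\Box \chi + |\phi|^2 \chi = 0$. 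This is still a homogeneous linear wave equation in $\chi$ with vanishing Cauchy data, so uniqueness still forces $\chi \equiv 0$ and the argument closes --- but as written the step is circular. (Alternatively, retain the full covariant equation $\D^a \D_a \phi + \tfrac{1}{6}R\phi = 0$ in the iteration, in which case $\nabla^a J_a = 0$ genuinely holds and $\Box \chi = 0$.)
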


\begin{corollary} \label{cor:wellposedness} Consider the system \eqref{phiAequationsCGprojected} on $\mathfrak{E} = \mathbb{S}^3 \times \mathbb{R}$ and suppose that for $m \geq 2$ we are given data $\mathbf{A}_0$, $\phi_0 \in H^m(\mathbb{S}^3_0)$ and $\mathbf{A}_1$, $\phi_1 \in H^{m-1}(\mathbb{S}^3_0)$ satisfying the strong Coulomb gauge initially. Then there exists an interval $I_\sigma = (-\sigma, \sigma) \subset \mathbb{R}$ and $(A_0,\mathbf{A},\phi) \in E_m(\mathbb{S}^3 \times I_\sigma)$ satisfying the system \eqref{phiAequationsCGprojected} and the strong Coulomb gauge conditions $\bar{A}_0 = 0$, $\slashgrad \cdot \mathbf{A} = 0$ such that
\[ \mathbf{A}|_{\mathbb{S}^3_0} = \mathbf{A}_0, \quad \dot{\mathbf{A}}|_{\mathbb{S}^3_0} = \mathbf{A}_1, \quad \phi|_{\mathbb{S}^3_0} = \phi_0, \quad \dot{\phi}|_{\mathbb{S}^3_0} = \phi_1.  \]
The supremum of such numbers $\sigma > 0$ depends continuously on 
\[ M_2 = \| a_0 \|_{H^m} + \|\mathbf{A}_0 \|_{H^m} + \| \mathbf{A}_1 \|_{H^{m-1}} + \| \phi_0 \|_{H^{m-1}} + \| \phi_1 \|_{H^{m-1}} \simeq \mathrm{S}_m[\phi, A](0)^{1/2}  \]
and tends to infinity as $M_2$ tends to zero, where $a_0$ is determined by $\phi_0$ and $\phi_1$ via the elliptic equation \eqref{ellipticequationa0} on $\mathbb{S}^3_0$. The solution $(A_0, \mathbf{A}, \phi)$ is unique in $E_m(\mathbb{S}^3 \times I_\sigma)$ up to gauge transformations preserving the strong Coulomb gauge\footnote{Recall that the gauge transformations preserving the strong Coulomb gauge are just the trivial ones $\chi = \mathrm{e}^{i \theta}$ for global constants $\theta \in \mathbb{R}$.}.
\end{corollary}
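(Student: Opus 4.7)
The strategy is to reduce the corollary to \Cref{thm:ChoquetBruhatexistence} by (i) translating admissible strong Coulomb gauge data into Lorenz gauge Cauchy data, (ii) invoking that theorem to obtain a Lorenz-gauge solution, and (iii) pulling it back to strong Coulomb gauge via a $\mathrm{U}(1)$ gauge transformation. Given admissible data $(\phi_0, \mathbf{A}_0, \phi_1, \mathbf{A}_1)$, I would first solve the elliptic equation \eqref{ellipticequationa0} on $\mathbb{S}^3_0$ for $a_0$. Admissibility guarantees $\bar{a}_0 = 0$, and since $m \geq 2$ makes $|\phi_0|^2 \in H^m \hookrightarrow L^\infty(\mathbb{S}^3)$, standard elliptic theory gives $a_0 \in H^{m+1}(\mathbb{S}^3)$. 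I would then take as Lorenz-gauge initial data $a \defeq a_0\,\mathrm{d}\tau + \mathbf{A}_0$ and $\mathbf{E}_\mu \defeq \slashgrad_\mu a_0 - (\mathbf{A}_1)_\mu$, which are $H^m$ and $H^{m-1}$ respectively. Using the Coulomb condition $\slashgrad \cdot \mathbf{A}_1 = 0$, the Gauss constraint $(\dagger)$ reduces to $\slashed{\Delta} a_0 = a_0 |\phi_0|^2 + \operatorname{Im}(\bar{\phi}_0 \phi_1)$, which is nothing but \eqref{ellipticequationa0}. \Cref{thm:ChoquetBruhatexistence} (applied with $n = 3$, $m \geq 2 > 3/2$) then yields a Lorenz-gauge solution $(\tilde{A}, \tilde{\phi}) \in E_m(\mathbb{S}^3 \times I_\sigma)$, with $\sigma$ depending continuously on the data size and tending to infinity as that size tends to zero.

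Next I would construct the gauge function $\chi$ slicewise. On each $\mathbb{S}^3_\tau$ let $\chi_\perp(\cdot, \tau)$ be the unique mean-zero solution of $\slashed{\Delta} \chi_\perp = - \slashgrad \cdot \tilde{\mathbf{A}}$; this is solvable because the right-hand side is a spatial divergence. Differentiating this equation in $\tau$ and iterating, elliptic regularity together with $\tilde{\mathbf{A}} \in E_m$ yields $\chi_\perp \in E_{m+1}$, so $\nabla \chi_\perp \in E_m$. Adjoining a spatially constant correction $c(\tau)$ defined by $c(0) = 0$ and $\dot{c}(\tau) = - \overline{\tilde{A}_0}(\tau)$, I set $\chi \defeq \chi_\perp + c$ and $(A, \phi) \defeq (\tilde{A} + \nabla \chi, \mathrm{e}^{-i\chi} \tilde{\phi})$. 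Because $H^m(\mathbb{S}^3)$ is a Banach algebra for $m \geq 2 > 3/2$ and $\mathrm{e}^{-i\chi}$ preserves $H^{m+1}$ by Moser/Sobolev composition, $(A, \phi) \in E_m$ and by construction solves \eqref{phiAequationsCGprojected} in strong Coulomb gauge. Since the original data already satisfy the strong Coulomb gauge, evaluating the slicewise elliptic equation at $\tau = 0$ gives $\chi_\perp(\cdot, 0) \equiv 0$; differentiating in $\tau$ and using $\slashgrad \cdot \mathbf{A}_1 = 0$ gives $\dot{\chi}_\perp(\cdot, 0) \equiv 0$; and $c(0) = 0$ with $\dot{c}(0) = - \bar{a}_0 = 0$ by admissibility. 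Therefore $\chi$ and $\dot{\chi}$ vanish on $\{ \tau = 0 \}$, so the prescribed initial data are reproduced exactly.

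Uniqueness up to a global constant phase follows by reversing the construction: two $E_m$ strong Coulomb gauge solutions of \eqref{phiAequationsCGprojected} with the same admissible data, pushed to Lorenz gauge, agree by the uniqueness clause of \Cref{thm:ChoquetBruhatexistence} up to a gauge $\chi'$ with $\Box \chi' = 0$; demanding that $\chi'$ preserve strong Coulomb gauge requires $\slashed{\Delta} \chi'(\cdot, \tau) = 0$ and $\overline{\dot{\chi}'}(\tau) = 0$, and compactness of $\mathbb{S}^3$ forces $\chi'$ to be a global real constant, in agreement with the footnote. The main technical point throughout is propagating the $E_m$ regularity cleanly through the slicewise elliptic inversion and the nonlinear composition $\mathrm{e}^{-i\chi} \tilde{\phi}$; the hypothesis $m \geq 2$ is precisely what makes both of these routine, via the Banach algebra property of $H^m(\mathbb{S}^3)$ and standard Moser-type estimates.
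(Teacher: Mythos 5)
Your proposal follows essentially the same route as the paper: solve the elliptic equation \eqref{ellipticequationa0} for $a_0$, package the data as Lorenz-gauge Cauchy data $(a, \mathbf{E}) = (a_0\,\d\tau + \mathbf{A}_0,\ \slashgrad a_0 - \mathbf{A}_1)$ satisfying the constraint \eqref{constraint}, invoke \Cref{thm:ChoquetBruhatexistence}, and gauge-transform the resulting Lorenz-gauge solution back to strong Coulomb gauge. Your write-up in fact supplies more detail than the paper's on the slicewise construction of $\chi$ and the propagation of $E_m$ regularity; the only point the paper addresses that you pass over is the explicit verification that $M_1 \simeq M_2$, which is needed for the stated continuous dependence of $\sigma$ on $\mathrm{S}_m[\phi,A](0)^{1/2}$ but follows readily from your elliptic estimate for $a_0$.
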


\begin{proof} Given admissible $\phi_0 \in H^m(\mathbb{S}^3_0)$ and $\phi_1 \in H^{m-1}(\mathbb{S}^3_0)$, the equation
\[ - \slashed{\Delta} a_0 + |\phi_0|^2 a_0 = - \operatorname{Im}(\bar{\phi}_0 \phi_1) \]
on $\mathbb{S}^3_0$ has a unique solution $a_0$ in $H^m$ which by \cref{strongCGconstraint} satisfies $\bar{a}_0 = 0$. We define $\mathbf{E} \defeq \slashgrad a_0 - \mathbf{A}_1$, which by construction satisfies \eqref{constraint}. We may thus apply \Cref{thm:ChoquetBruhatexistence}. Note that we do not prescribe $\dot{A}_0$, but instead construct it so that the Lorenz gauge condition is satisfied initially. The Lorenz gauge is then propagated by the equations \eqref{phiAequation1} in Lorenz gauge (but note that, of course, the strong Coulomb gauge is not). We thus have a solution $(A_a,\phi) \in E_m(\mathbb{S}^3 \times I_\sigma)$ of \eqref{phiAequation1} satisfying $\nabla_a A^a = 0$ throughout $\mathbb{S}^3 \times I_\sigma$. Now perform a gauge transformation as in \Cref{sec:strongCoulombgauge} to convert this solution to a solution $(A_0, \mathbf{A}, \phi) \in E_m(\mathbb{S}^3 \times I_{\sigma})$ of \eqref{phiAequationsCGprojected} satisfying the strong Coulomb gauge. It is easy to see that this gauge transformation preserves $E_m$ regularity, while uniqueness up to gauge transformations is also clear. As for the continuous dependence of $\sigma$ on the data, we note that
\begin{align*} M_1 &= \| a \|_{H^2} + \| \phi_0 \|_{H^2} + \| \phi_1 \|_{H^1} + \| \mathbf{E} \|_{H^1} \\ & \la \| a_0 \|_{H^2} + \| \mathbf{A}_0 \|_{H^2} + \| \phi_0 \|_{H^2} + \| \phi_1 \|_{H^1} + \| \slashgrad a_0 \|_{H^1} + \| \mathbf{A}_1 \|_{H^1} \\ & \la \| a_0 \|_{H^2} + \| \mathbf{A}_0 \|_{H^2} + \| \mathbf{A}_1 \|_{H^1} + \| \phi_0 \|_{H^2} + \| \phi_1 \|_{H^1}  = M_2,
\end{align*}
and similarly $M_2 \la M_1$. Thus $M_1 \simeq M_2$ and we are done.
\end{proof}

\section{Geometric and Sobolev Energies} \label{sec:energies}

\subsection{The Maxwell Sector}

For ease of presentation we treat the Maxwell and the scalar field sectors of the energy-momentum tensor $\mathbf{T}_{ab}$ separately. The energy-momentum tensor for the Maxwell sector in terms of the Maxwell field $F_{ab}$ on $\mathfrak{E}$ is 
\[ \mathbf{T}_{ab}[F] = - F_a^{\phantom{a}c} F_{bc} + \frac{1}{4} \mathfrak{e}_{ab} F_{cd} F^{cd}, \]
or in terms of the potential $A_a$
\begin{align*} \mathbf{T}_{ab}[A] = & - \nabla_a A^c \nabla_b A_c + \nabla^c A_a \nabla_b A_c + \nabla_a A^c \nabla_c A_b - \nabla^c A_a \nabla_c A_b \\ &+ \frac{1}{2} \mathfrak{e}_{ab} \left( \nabla_c A_d \nabla^c A^d - \nabla_c A_d \nabla^d A^c \right). \end{align*}
The Maxwell sector energy density with respect to the foliation $\mathcal{F}$ is given by the component
\begin{align*} \mathbf{T}_{00}[A] &= \mathbf{T}_{ab} T^a T^b \\
& = -\dot{A}^c \dot{A}_c + 2 \dot{A}_c \nabla^c A_0 - \nabla^c A_0 \nabla_c A_0 + \frac{1}{2} \left( \nabla_c A_d \nabla^c A^d - \nabla_c A_d \nabla^d A^c \right),	
\end{align*}
where in the above we have denoted by $A_0 \defeq T^a A_a$ and $\dot{A}_a \defeq T^b \nabla_b A_a$. Note that the metric $\mathfrak{e}$ splits as the direct sum $\mathfrak{e} = g_{\mathbb{R}} \oplus (- \mathfrak{s}_3)$, so in particular the full connection $\nabla$ also splits as $\nabla = \nabla^{\mathbb{R}} \oplus \nabla^{\mathfrak{s}_3} = \partial_\tau \oplus \slashgrad$. This can also be seen at the level of the Christoffel symbols on $\mathfrak{E}$ in \Cref{prop:Christoffels}. Furthermore, there is no curvature in the $\tau$ direction (see \Cref{prop:Riemanntensorcomponents}), so in particular $\partial_\tau$ commutes with the $3$-sphere derivatives, $[ \partial_\tau, \, \slashgrad] = 0$. We have
\begin{equation} \label{T00Maxwell} \mathbf{T}_{00}[A] = \frac{1}{2} | \dot{\mathbf{A}} |^2 + \frac{1}{2} | \slashgrad A_0 |^2 + \frac{1}{2} | \slashgrad \mathbf{A} |^2 - \dot{\mathbf{A}} \cdot \slashgrad A_0 - \frac{1}{2} (\slashgrad_\mu \mathbf{A}_\nu)( \slashgrad^\nu \mathbf{A}^\mu ).
\end{equation}
We impose the Coulomb gauge
\[ \slashgrad \cdot \mathbf{A} = 0 \]
on each $\mathbb{S}^3_\tau \simeq \mathbb{S}^3$ so that the last two terms become non-negative-definite upon integration by parts:
\begin{align*} \int_{\mathbb{S}^3} - \dot{\mathbf{A}} \cdot \slashgrad A_0 \, \dvol_{\mathfrak{s}_3} = \int_{\mathbb{S}^3} A_0 \slashgrad \cdot \dot{\mathbf{A}} \, \dvol_{\mathfrak{s}_3} = 0,
\end{align*}
and
\begin{align*} \int_{\mathbb{S}^3} - \frac{1}{2} (\slashgrad_\mu \mathbf{A}_\nu) (\slashgrad^\nu \mathbf{A}^\mu ) \, \dvol_{\mathfrak{s}_3} &= \int_{\mathbb{S}^3} \left( \frac{1}{2} \mathbf{A}^\mu \slashgrad_\mu \slashgrad_\nu \mathbf{A}^\nu - \frac{1}{2} R({\mathfrak{s}_3)}_{\mu \nu} \mathbf{A}^\mu \mathbf{A}^\nu \right) \dvol_{\mathfrak{s}_3} \\
&= \int_{\mathbb{S}^3} | \mathbf{A} |^2 \, \dvol_{\mathfrak{s}_3}.
\end{align*}
Thus the Maxwell energy on surfaces of constant $\tau$ is
\begin{align*} \mathcal{E}_\tau[A] & \defeq \int_{\mathbb{S}^3} \mathbf{T}_{00}[A] \, \dvol_{\mathfrak{s}_3} (\tau) \\  
& \simeq \| \mathbf{A} \|^2_{H^1}(\tau) + \| \dot{\mathbf{A}} \|^2_{L^2}(\tau) + \| \slashgrad A_0 \|^2_{L^2}(\tau) \\
& = \mathrm{S}_1[\mathbf{A}](\tau) + \| \slashgrad A_0 \|^2_{L^2}(\tau).
\end{align*}
Imposing the additional condition $\bar{A}_0(\tau) = 0$, one has that $\| A_0 \|_{L^2(\mathbb{S}^3)}^2 \la \| \slashgrad A_0 \|^2_{L^2(\mathbb{S}^3)}$, so
\begin{equation} \label{Maxwellphysicalsobolevequivalence1} \mathcal{E}_\tau[A] \simeq \mathrm{S}_1[A](\tau) \end{equation}
for all $\tau \in \mathbb{R}$.

\subsubsection{Higher Order Energies}

More generally, for a $1$-form $\alpha$ set 
\begin{align*} \mathbf{T}_{ab}[\alpha] & \defeq - \nabla_a \alpha^c \nabla_b \alpha_c + \nabla^c \alpha_a \nabla_b \alpha_c + \nabla_a \alpha^c \nabla_c \alpha_b - \nabla^c \alpha_a \nabla_c \alpha_b \\
& + \frac{1}{2} \mathfrak{e}_{ab} \left( \nabla_c \alpha_d \nabla^c \alpha^d - \nabla_c \alpha_d \nabla^d \alpha^c \right).
\end{align*}
When $\alpha_a = A_a$, this is, of course, just the Maxwell energy-momentum tensor written out in terms of the potential. As in \cref{T00Maxwell}, we have
\[ \mathbf{T}_{00}[\alpha] = \frac{1}{2} | \dot{\boldsymbol{\alpha}} |^2 + \frac{1}{2} | \slashgrad \alpha_0 |^2 + \frac{1}{2} | \slashgrad \boldsymbol{\alpha} |^2 - \dot{\boldsymbol{\alpha}}_\mu \slashgrad^\mu \alpha_0 - \frac{1}{2}( \slashgrad_\mu \boldsymbol{\alpha}_\nu )( \slashgrad^\nu \boldsymbol{\alpha}^\mu ).  \]
Integrating by parts as before we obtain
\begin{align*} \mathcal{E}_\tau[\alpha]  \defeq &\int_{\mathbb{S}^3} \mathbf{T}_{00}[\alpha] \, \dvol_{\mathfrak{s}_3} \\  =& \frac{1}{2} \int_{\mathbb{S}^3} | \dot{\boldsymbol{\alpha}} |^2 \, \dvol_{\mathfrak{s}_3} + \frac{1}{2} \int_{\mathbb{S}^3} | \slashgrad \alpha_0 |^2 \, \dvol_{\mathfrak{s}_3} + \frac{1}{2} \int_{\mathbb{S}^3} | \slashgrad \boldsymbol{\alpha} |^2 \, \dvol_{\mathfrak{s}_3} \\ &+ \int_{\mathbb{S}^3} \alpha_0 \slashgrad_\mu \dot{\boldsymbol{\alpha}}^\mu \, \dvol_{\mathfrak{s}_3} - \frac{1}{2} \int_{\mathbb{S}^3} | \slashgrad \cdot \boldsymbol{\alpha} |^2 \, \dvol_{\mathfrak{s}_3} + \int_{\mathbb{S}^3} | \boldsymbol{\alpha} |^2 \, \dvol_{\mathfrak{s}_3}.  \end{align*}
For our second order estimates we will want to set $\alpha_a = X_i^\mu \slashgrad_\mu A_a \defeq \slashgrad_i A_a$ and sum over $i$ for a basis of vector fields $\{ X_i \}_i$ on $\mathbb{S}^3$ (e.g. a basis of left-invariant vector fields on $\mathbb{S}^3 \simeq \mathrm{SU}(2)$). The first term in the above is then clearly
\[ \sum_i | \dot{\boldsymbol{\alpha}} |^2 = \sum_i \slashgrad_i \dot{\mathbf{A}}_\mu \slashgrad_i \dot{\mathbf{A}}^\mu = | \slashgrad \dot{\mathbf{A}} |^2, \]
the second term becomes
\[ \sum_i | \slashgrad \alpha_0 |^2 = \sum_i \slashgrad_\mu \slashgrad_i A_0 \slashgrad^\mu \slashgrad_i A_0 = | \slashgrad^2 A_0 |^2 + \text{l.o.t.s}, \]
the third term becomes
\[ \sum_i |\slashgrad \boldsymbol{\alpha} |^2 = \sum_i \slashgrad_\mu \slashgrad_i \mathbf{A}_\nu \slashgrad^\mu \slashgrad_i \mathbf{A}^\nu = | \slashgrad^2 \mathbf{A} |^2 + \text{l.o.t.s}, \]
the fourth term, after commuting derivatives to impose the Coulomb gauge $\slashgrad \cdot \mathbf{A} = 0$, is
\[ \sum_i \alpha_0 \slashgrad_\mu \dot{\boldsymbol{\alpha}}^\mu = \sum_i \slashgrad_i A_0 \slashgrad_\mu \slashgrad_i \dot{\mathbf{A}}^\mu = \text{l.o.t.s}, \]
and the fifth term similarly becomes
\[ \sum_i | \slashgrad \cdot \boldsymbol{\alpha} |^2 =  \sum_i \slashgrad_\mu \slashgrad_i \mathbf{A}^\mu \slashgrad_\nu \slashgrad_i \slashed{\mathbf{A}}^\nu = \text{l.o.t.s}, \]
where in the above we have written $\slashgrad_j \defeq X_j^\mu \slashgrad_\mu$, and the lower order terms are at most quadratic and of order zero and one in derivatives of $A_a$. The sixth and final term is
\[ \sum_i |\boldsymbol{\alpha} |^2 = \sum_i | \slashgrad_i \mathbf{A} |^2 = \text{l.o.t.s}. \]
The lower order terms can be controlled by $\mathcal{E}_\tau[A] \simeq \mathrm{S}_1[A](\tau)$, so we can find a constant $C>0$ large enough such that
\begin{align*} \mathcal{E}_{\tau}[A] + \sum_i \mathcal{E}_\tau[\slashgrad_i A] &\simeq C \mathcal{E}_\tau[A] + \sum_i \mathcal{E}_\tau[\slashgrad_i A] \\ 
& \simeq \| \mathbf{A} \|^2_{H^2}(\tau) + \| \dot{\mathbf{A}} \|^2_{H^1}(\tau) + \| \slashgrad A_0 \|^2_{H^1}(\tau) \\
&= \mathrm{S}_2[\mathbf{A}](\tau) + \| \slashgrad A_0 \|^2_{H^1}(\tau).
\end{align*}
As before, the strong Coulomb gauge implies $\| A_0 \|_{L^2} \la \| \slashgrad A_0 \|_{L^2}$, and so
\begin{equation} \label{Sobolev2equivalenceA} \mathcal{E}_\tau[A] + \sum_i \mathcal{E}_\tau[\slashgrad_i A] \simeq \mathrm{S}_2[A](\tau).
\end{equation}
Similarly, it is easy to see that the strong Coulomb gauge gives
\[ \sum_{k=0}^{m-1} \mathcal{E}_\tau [ \slashgrad^k A ] \simeq \mathrm{S}_m [A] (\tau), \]
where $\mathcal{E}_\tau [ \slashgrad^k A]$ denotes $\sum_{i_1, \dots, i_k } \mathcal{E}_\tau [ \slashgrad_{i_1} \dots \slashgrad_{i_k} A ]$.

\subsection{The Scalar Field Sector}

The energy-momentum tensor for the scalar field sector on $\mathfrak{E}$ is
\[ \mathbf{T}_{ab}[\phi] = \overline{\D_{(a} \phi} \D_{b)} \phi - \frac{1}{2} \mathfrak{e}_{ab} \overline{\D_c \phi} \D^c \phi + \frac{1}{2} \mathfrak{e}_{ab} | \phi |^2, \]
and we calculate
\begin{align*} \mathbf{T}_{00}[\phi] =& |\D_0 \phi |^2 - \frac{1}{2} \overline{\D_c \phi} \D^c \phi + \frac{1}{2} | \phi |^2 \\ = & \frac{1}{2} |\D_0 \phi |^2 + \frac{1}{2} \overline{\slashed{\D}_\mu \phi} \slashed{\D}^\mu \phi + \frac{1}{2} | \phi |^2 
\end{align*}
and
\[ \mathcal{E}_\tau[\phi] = \frac{1}{2} \| \D_0 \phi \|^2_{L^2}(\tau) + \frac{1}{2} \| \slashed{\D} \phi \|^2_{L^2}(\tau) + \frac{1}{2} \| \phi \|^2_{L^2}(\tau), \]
where $\D_0 \phi = \dot{\phi} + iA_0 \phi$ and $\slashed{\D}_\mu = \slashgrad_\mu + i \mathbf{A}_\mu$.
More generally, we set 
\[ \mathbf{T}_{ab}[\psi] = \overline{\D_{(a} \psi} \D_{b)} \psi - \frac{1}{2} \mathfrak{e}_{ab} \overline{\D_c \psi} \D^c \psi + \frac{1}{2} \mathfrak{e}_{ab} | \psi |^2 \]
and
\[ \mathcal{E}_\tau[\psi] = \frac{1}{2} \| \D_0 \psi \|^2_{L^2}(\tau) + \frac{1}{2} \| \slashed{\D} \psi \|^2_{L^2}(\tau) + \frac{1}{2} \| \psi \|^2_{L^2}(\tau) \]
for any complex scalar field $\psi$ on $\mathfrak{E}$.
As with the Maxwell sector, we will want to choose $\psi = \slashgrad_i \phi$ for our second order estimates.

\subsubsection{Conversion Between Geometric and Sobolev Energies}

\begin{proposition} \label{prop:gradslashphibound} For any fixed $\tau \in \mathbb{R}$ and any sufficiently smooth complex scalar field $\psi$ on $\mathfrak{E}$ there exists $\epsilon > 0$ small enough such that if $\mathrm{S}_1[\mathbf{A}](\tau) \leq \epsilon$, then
\[ \| \slashgrad \psi \|^2_{L^2}(\tau) \la \mathcal{E}_\tau[\psi]. \]
\end{proposition}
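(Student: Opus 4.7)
The plan is to exploit the difference between $\slashgrad \psi$ and the gauge covariant $\slashed{\D}\psi$, viewing $\slashgrad \psi = \slashed{\D}\psi - i \mathbf{A}\psi$ as a perturbation controlled by the smallness of $\mathbf{A}$. Since $\mathcal{E}_\tau[\psi]$ directly bounds $\|\slashed{\D}\psi\|_{L^2}^2$ and $\|\psi\|_{L^2}^2$, what is left is to show that the $|\mathbf{A}|^2 |\psi|^2$ cross term is small enough that it can be absorbed.

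First I would apply the pointwise triangle-type bound $|\slashgrad \psi|^2 \leq 2 |\slashed{\D}\psi|^2 + 2|\mathbf{A}|^2 |\psi|^2$ and integrate over $\mathbb{S}^3_\tau$. The first resulting term is immediately controlled by $\mathcal{E}_\tau[\psi]$ up to a constant. For the second, I would estimate by H\"older:
\[ \int_{\mathbb{S}^3} |\mathbf{A}|^2 |\psi|^2 \, \dvol_{\mathfrak{s}_3} \leq \| \mathbf{A} \|_{L^4}^2 \, \| \psi \|_{L^4}^2. \]
Then I would invoke the Sobolev embedding $H^1(\mathbb{S}^3) \hookrightarrow L^4(\mathbb{S}^3)$ (which is valid since $4 < 6$, and $H^1 \hookrightarrow L^6$ is the critical embedding in dimension $3$) to bound both $L^4$ norms by the corresponding $H^1$ norms. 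This yields
\[ \int_{\mathbb{S}^3} |\mathbf{A}|^2 |\psi|^2 \, \dvol_{\mathfrak{s}_3} \la \mathrm{S}_1[\mathbf{A}](\tau) \cdot \bigl( \| \psi \|_{L^2}^2 + \| \slashgrad \psi \|_{L^2}^2 \bigr) \leq \epsilon \, C \bigl( \| \psi \|_{L^2}^2 + \| \slashgrad \psi \|_{L^2}^2 \bigr). \]

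Combining the two estimates gives a bound of the form
\[ \| \slashgrad \psi \|_{L^2}^2 \leq C_1 \mathcal{E}_\tau[\psi] + C_2 \epsilon \| \slashgrad \psi \|_{L^2}^2 + C_2 \epsilon \| \psi \|_{L^2}^2. \]
Choosing $\epsilon$ small enough that $C_2 \epsilon < 1/2$ lets me absorb the $\| \slashgrad \psi \|_{L^2}^2$ term on the right into the left. Since $\| \psi \|_{L^2}^2$ is already controlled by $2 \mathcal{E}_\tau[\psi]$, the inequality $\| \slashgrad \psi \|_{L^2}^2 \la \mathcal{E}_\tau[\psi]$ follows.

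The only delicate point is really just the absorption step, i.e.\ making sure the constants line up so that the smallness threshold $\epsilon$ is independent of $\psi$. There is no PDE-level difficulty here because we are working at a fixed time slice and using only Sobolev embedding on the compact manifold $\mathbb{S}^3$; all constants depend only on the geometry of $(\mathbb{S}^3,\mathfrak{s}_3)$. The assumption of sufficient smoothness of $\psi$ is used only to make sense of $\slashed{\D}\psi$ and $\slashgrad\psi$ in $L^2$.
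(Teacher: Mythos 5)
Your proposal is correct and follows essentially the same route as the paper: split $\slashgrad\psi = \slashed{\D}\psi - i\mathbf{A}\psi$, control the cross term by H\"older plus Sobolev embedding on $\mathbb{S}^3$, and absorb the resulting $\epsilon\,\|\slashgrad\psi\|_{L^2}^2$ term. The only (immaterial) difference is your choice of the symmetric $L^4\times L^4$ H\"older split where the paper uses $L^6\times L^3$; both reduce to $H^1(\mathbb{S}^3)\hookrightarrow L^6$ and give the same conclusion.
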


\begin{proof} We suppress the $\tau$ variable. Clearly 
\begin{align*} \| \slashgrad \psi \|^2_{L^2} \la \| \slashed{\D} \psi \|^2_{L^2} + \| \mathbf{A} \psi \|^2_{L^2} \la \mathcal{E}[\psi] + \| \mathbf{A} \|^2_{L^6} \| \psi \|^2_{L^3}. \end{align*}
Now since $\mathbb{S}^3$ is compact, $\| \psi \|_{L^3} \la \| \psi \|_{L^6}$, and by Sobolev Embedding (\Cref{thm:sobolevembedding})
\[ \| \mathbf{A} \|^2_{L^6} \la \| \slashgrad \mathbf{A} \|^2_{L^2} + \| \mathbf{A} \|^2_{L^2} \la \mathrm{S}_1[\mathbf{A}] \]
and
\[ \| \psi \|^2_{L^6} \la \| \slashgrad \psi \|^2_{L^2} + \| \psi \|^2_{L^2} \la \| \slashgrad \psi \|^2_{L^2} + \mathcal{E}[\psi]. \]
This gives
\[ \| \slashgrad \psi \|^2_{L^2} \leq C (1+\mathrm{S}_1[\mathbf{A}] )  \mathcal{E}[\psi] + C \mathrm{S}_1[\mathbf{A}] \| \slashgrad \psi \|^2_{L^2} \leq C \epsilon \| \slashgrad \psi \|^2_{L^2} + C (1+ \epsilon) \mathcal{E}[\psi], \]
so
\[ \| \slashgrad \psi \|^2_{L^2} \leq C \left( \frac{1 + \epsilon}{1 - \epsilon C} \right) \mathcal{E}[\psi] \la \mathcal{E}[\psi] \]
for $\epsilon > 0$ small enough.
\end{proof}

\begin{proposition} \label{prop:phidotbound} For any fixed $\tau \in \mathbb{R}$ and any sufficiently smooth complex scalar field $\psi$ on $\mathfrak{E}$ there exists $\epsilon > 0$ such that if $\mathrm{S}_1[\mathbf{A}](\tau) \leq \epsilon$, then 
\[ \| \dot{\psi} \|^2_{L^2}(\tau) \la (1+\mathrm{S}_1[A_0](\tau)) \, \mathcal{E}_\tau[\psi]. \]
\end{proposition}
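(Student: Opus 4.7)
The plan is to follow the same scheme as \Cref{prop:gradslashphibound}: rewrite $\dot{\psi}$ in terms of the gauge covariant derivative, exploit that $\D_0\psi$ is directly controlled by $\mathcal{E}_\tau[\psi]$, and then handle the remaining $A_0 \psi$ cross term by H\"older together with Sobolev embedding on $\mathbb{S}^3$. Suppress $\tau$ throughout.

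First I would write $\dot{\psi} = \D_0 \psi - i A_0 \psi$ and use the triangle inequality to obtain
\[ \| \dot{\psi} \|_{L^2}^2 \la \| \D_0 \psi \|_{L^2}^2 + \| A_0 \psi \|_{L^2}^2 \la \mathcal{E}[\psi] + \| A_0 \psi \|_{L^2}^2, \]
since $\| \D_0 \psi \|_{L^2}^2 \leq 2 \mathcal{E}[\psi]$ by the definition of $\mathcal{E}[\psi]$.

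Next I would estimate the cross term by H\"older with conjugate exponents $(6,3)$, giving $\| A_0 \psi \|_{L^2}^2 \leq \| A_0 \|_{L^6}^2 \| \psi \|_{L^3}^2$. The Sobolev embedding $H^1(\mathbb{S}^3) \hookrightarrow L^6(\mathbb{S}^3)$ (valid in spatial dimension $3$) yields $\| A_0 \|_{L^6}^2 \la \| A_0 \|_{H^1}^2 = \mathrm{S}_1[A_0]$. For the $\psi$ factor I would use compactness of $\mathbb{S}^3$ to interpolate $\| \psi \|_{L^3} \la \| \psi \|_{L^6}$ and then Sobolev embedding again to obtain $\| \psi \|_{L^6}^2 \la \| \slashgrad \psi \|_{L^2}^2 + \| \psi \|_{L^2}^2$. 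At this point I would invoke \Cref{prop:gradslashphibound}, which under the smallness hypothesis $\mathrm{S}_1[\mathbf{A}] \leq \epsilon$ gives $\| \slashgrad \psi \|_{L^2}^2 \la \mathcal{E}[\psi]$, while $\| \psi \|_{L^2}^2 \leq 2 \mathcal{E}[\psi]$ is immediate from the definition. Combining these produces $\| \psi \|_{L^3}^2 \la \mathcal{E}[\psi]$.

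Putting everything together one obtains
\[ \| \dot{\psi} \|_{L^2}^2 \la \mathcal{E}[\psi] + \mathrm{S}_1[A_0] \, \mathcal{E}[\psi] = (1 + \mathrm{S}_1[A_0]) \, \mathcal{E}[\psi], \]
which is the claimed estimate. There is no genuine obstacle here beyond bookkeeping: the only point requiring care is that the reduction $\| \slashgrad \psi \|_{L^2}^2 \la \mathcal{E}[\psi]$ is not free, but is supplied by the preceding proposition at the cost of the smallness assumption on $\mathrm{S}_1[\mathbf{A}]$, and that is precisely why the hypothesis $\mathrm{S}_1[\mathbf{A}] \leq \epsilon$ reappears in the statement. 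Note in particular that, unlike in \Cref{prop:gradslashphibound}, no Poincar\'e-type inequality on $A_0$ (and hence no use of the additional gauge condition $\bar{A}_0 = 0$) is needed here, since the full $H^1$ norm of $A_0$ naturally appears through the factor $\mathrm{S}_1[A_0]$.
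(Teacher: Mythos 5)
Your argument is correct and follows essentially the same route as the paper: the splitting $\dot{\psi} = \D_0\psi - iA_0\psi$, H\"older with exponents $(6,3)$, the Sobolev embedding $H^1(\mathbb{S}^3)\hookrightarrow L^6(\mathbb{S}^3)$, and the reduction of $\|\slashgrad\psi\|^2_{L^2}$ to $\mathcal{E}[\psi]$ via \Cref{prop:gradslashphibound} under the smallness hypothesis on $\mathrm{S}_1[\mathbf{A}]$. Your closing observation that the factor $(1+\mathrm{S}_1[A_0])$ absorbs the full $H^1$ norm of $A_0$ without any Poincar\'e inequality is also consistent with the paper's treatment.
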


\begin{proof} Working similarly to the previous proposition,
\begin{align*} \| \dot{\psi} \|^2_{L^2} &\la \| \D_0 \psi \|^2_{L^2} + \| A_0 \psi \|^2_{L^2} \la \mathcal{E}[\psi] + \| A_0 \|^2_{L^6} \| \psi \|^2_{L^3} \la \mathcal{E}[\psi] + \| A_0 \|^2_{H^1} \|\psi\|^2_{L^6}.
\end{align*}
Also $\| \psi \|^2_{L^6} \la \| \slashgrad \psi \|^2_{L^2} + \| \psi \|^2_{L^2} $, so
\begin{align*} \| \dot{\psi} \|^2_{L^2} & \la \mathcal{E}[\psi] + \mathrm{S}_1[A_0] \left( \| \slashgrad \psi \|^2_{L^2} + \| \psi \|^2_{L^2} \right) \la \left( 1 + \mathrm{S}_1[A_0] \right) \mathcal{E}[\psi] + \mathrm{S}_1[A_0] \| \slashgrad \psi \|^2_{L^2}.
\end{align*}
\Cref{prop:gradslashphibound} now gives the result for small $\mathrm{S}_1[\mathbf{A}]$.
\end{proof}

\begin{proposition} \label{prop:Dslashphismall} For any fixed $\tau \in \mathbb{R}$ and any sufficiently smooth complex scalar field $\psi$ on $\mathfrak{E}$ one has
\[ \| \slashed{\D} \psi \|^2_{L^2}(\tau) \la  \mathrm{S}_1[\psi](\tau)( 1 + \mathrm{S}_1[\mathbf{A}](\tau) ). \]
\end{proposition}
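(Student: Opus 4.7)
The plan is to expand out the gauge covariant derivative and estimate the two resulting terms by direct means, mirroring the opening steps of \Cref{prop:gradslashphibound,prop:phidotbound} but without any absorption trick, since here we are allowed a multiplicative factor of $1 + \mathrm{S}_1[\mathbf{A}]$ on the right.

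Concretely, I would first write $\slashed{\D}_\mu \psi = \slashgrad_\mu \psi + i \mathbf{A}_\mu \psi$ and invoke the elementary inequality $|a+b|^2 \la |a|^2 + |b|^2$ to reduce the claim to bounding $\|\slashgrad \psi\|^2_{L^2}$ and $\|\mathbf{A} \psi\|^2_{L^2}$ separately. The first term is controlled by the definition of $\mathrm{S}_1[\psi] = \|\dot\psi\|^2_{L^2} + \|\psi\|^2_{H^1}$ at once, which already accounts for the $\mathrm{S}_1[\psi]$ part on the right-hand side (with no $\mathrm{S}_1[\mathbf{A}]$ factor).

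For the second term, the plan is to apply H\"older's inequality with exponents $(6,3)$ to separate $\mathbf{A}$ and $\psi$, giving $\|\mathbf{A}\psi\|^2_{L^2} \la \|\mathbf{A}\|^2_{L^6} \|\psi\|^2_{L^3}$. Since $\mathbb{S}^3$ is compact we trivially have $\|\psi\|_{L^3} \la \|\psi\|_{L^6}$, and then the Sobolev embedding $H^1(\mathbb{S}^3) \hookrightarrow L^6(\mathbb{S}^3)$ (\Cref{thm:sobolevembedding}, already used in the two preceding propositions) yields
\[
\|\mathbf{A}\|^2_{L^6} \la \|\mathbf{A}\|^2_{H^1} \la \mathrm{S}_1[\mathbf{A}], \qquad \|\psi\|^2_{L^6} \la \|\psi\|^2_{H^1} \la \mathrm{S}_1[\psi].
\]
Multiplying these two bounds together produces $\|\mathbf{A}\psi\|^2_{L^2} \la \mathrm{S}_1[\mathbf{A}]\, \mathrm{S}_1[\psi]$, and summing with the $\|\slashgrad \psi\|^2_{L^2}$ bound gives exactly $\mathrm{S}_1[\psi](1+\mathrm{S}_1[\mathbf{A}])$.

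There is no real obstacle here: unlike \Cref{prop:gradslashphibound,prop:phidotbound}, we do not need to absorb a $\|\slashgrad \psi\|^2_{L^2}$ term back to the left-hand side, because the right-hand side already contains the full $\mathrm{S}_1[\psi]$, so no smallness assumption on $\mathrm{S}_1[\mathbf{A}]$ is needed. The only point worth noting is the choice of the H\"older split $(6,3)$, which is dictated by the fact that $H^1 \hookrightarrow L^6$ is the sharp Sobolev embedding in three spatial dimensions.
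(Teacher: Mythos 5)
Your proposal is correct and follows essentially the same route as the paper: split $\slashed{\D}\psi$ into $\slashgrad\psi + i\mathbf{A}\psi$, bound the first term directly by $\mathrm{S}_1[\psi]$, and control $\|\mathbf{A}\psi\|^2_{L^2}$ via H\"older and the embedding $H^1(\mathbb{S}^3)\hookrightarrow L^6(\mathbb{S}^3)$. The only cosmetic difference is that the paper goes straight to $\|\mathbf{A}\|^2_{L^6}\|\psi\|^2_{L^6}$ while you pass through $\|\psi\|^2_{L^3}$ first; both are valid and you correctly observe that no absorption or smallness assumption is needed here.
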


\begin{proof} This is a simple consequence of the compactness of $\mathbb{S}^3$ and the Sobolev Embedding Theorem as above,
\begin{align*} \| \slashed{\D} \psi \|^2_{L^2} \la \| \slashgrad \psi \|^2_{L^2} + \| \mathbf{A} \psi \|^2_{L^2} \la \| \slashgrad \psi \|^2_{L^2} + \| \mathbf{A} \|^2_{L^6} \| \psi \|^2_{L^6} \la  \mathrm{S}_1[\psi] + \mathrm{S}_1[\mathbf{A}] \mathrm{S}_1[\psi] .
\end{align*}
\end{proof}

\begin{proposition} \label{prop:D0phismall} For any fixed $\tau \in \mathbb{R}$ and any sufficiently smooth complex scalar field $\psi$ on $\mathfrak{E}$ one has 
\[ \| \D_0 \psi \|^2_{L^2}(\tau) \la (1+\mathrm{S}_1[A_0](\tau)) \mathrm{S}_1[\psi](\tau). \]
\end{proposition}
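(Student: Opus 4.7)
The plan is to mimic the computation of Proposition on $\|\slashed{\D}\psi\|_{L^2}^2$, replacing $\slashgrad$ with $\partial_\tau$ and $\mathbf{A}$ with $A_0$. First I would expand the gauge covariant derivative as $\D_0 \psi = \dot\psi + iA_0 \psi$ and apply the triangle inequality to get
\[
\|\D_0 \psi\|_{L^2}^2 \la \|\dot\psi\|_{L^2}^2 + \|A_0 \psi\|_{L^2}^2.
\]
The first term is immediately bounded by $\mathrm{S}_1[\psi]$ by the very definition of the Sobolev-type energy.

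Next I would handle the nonlinear term $\|A_0 \psi\|_{L^2}^2$ by H\"older's inequality together with Sobolev embedding on $\mathbb{S}^3$. Using H\"older with exponents $3$ and $6$, one has $\|A_0 \psi\|_{L^2}^2 \la \|A_0\|_{L^6}^2 \|\psi\|_{L^3}^2$, and since $\mathbb{S}^3$ is compact we further bound $\|\psi\|_{L^3} \la \|\psi\|_{L^6}$. The Sobolev Embedding Theorem then gives $\|A_0\|_{L^6}^2 \la \|A_0\|_{H^1}^2 \la \mathrm{S}_1[A_0]$ and $\|\psi\|_{L^6}^2 \la \|\psi\|_{H^1}^2 \la \mathrm{S}_1[\psi]$, so that
\[
\|A_0 \psi\|_{L^2}^2 \la \mathrm{S}_1[A_0] \cdot \mathrm{S}_1[\psi].
\]

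Combining the two estimates yields
\[
\|\D_0 \psi\|_{L^2}^2 \la \mathrm{S}_1[\psi] + \mathrm{S}_1[A_0]\,\mathrm{S}_1[\psi] = (1 + \mathrm{S}_1[A_0])\,\mathrm{S}_1[\psi],
\]
as required. There is no real obstacle here since no smallness assumption on $\mathbf{A}$ is needed (contrast with Propositions on $\|\slashgrad\psi\|_{L^2}^2$ and $\|\dot\psi\|_{L^2}^2$, where absorption on the left-hand side was necessary); because both factors $A_0$ and $\psi$ on the right-hand side are already controlled by the $\mathrm{S}_1$ norms appearing in the statement, no absorption step is needed, and the proof is essentially a direct application of H\"older and Sobolev embedding.
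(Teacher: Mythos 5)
Your proposal is correct and follows essentially the same route as the paper: the paper's proof is precisely the splitting $\| \D_0 \psi \|^2_{L^2} \la \| \dot{\psi} \|^2_{L^2} + \| A_0 \psi \|^2_{L^2} \la \mathrm{S}_1[\psi](1 + \| A_0 \|^2_{H^1})$ via H\"older and Sobolev embedding on $\mathbb{S}^3$. Your closing observation that no absorption or smallness assumption is needed here is also consistent with the paper, which states the proposition without any smallness hypothesis.
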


\begin{proof} This follows from the same splitting and embedding as the previous propositions,
\[ \| \D_0 \psi \|^2_{L^2} \la \| \dot{\psi} \|^2_{L^2} + \| A_0 \psi \|^2_{L^2} \la \mathrm{S}_1[\psi] (1 + \| A_0 \|^2_{H^1} ) \la (1+\mathrm{S}_1[A_0]) \mathrm{S}_1[\psi]. \]
\end{proof}

\begin{theorem} \label{thm:scalarsobolevandphysicalequivalence} For any fixed $\tau \in \mathbb{R}$ and any sufficiently smooth complex scalar field $\psi$ on $\mathfrak{E}$ there exists $\epsilon > 0$ such that if $\mathrm{S}_1[A] \leq \epsilon$, then
\[ \mathrm{S}_1[\psi](\tau) \simeq \mathcal{E}[\psi](\tau). \]
\end{theorem}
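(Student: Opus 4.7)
The plan is to obtain the equivalence by simply combining the four preceding propositions, which have been engineered precisely for this purpose. Since the statement is the conjunction of two inequalities $\mathrm{S}_1[\psi](\tau) \lesssim \mathcal{E}[\psi](\tau)$ and $\mathcal{E}[\psi](\tau) \lesssim \mathrm{S}_1[\psi](\tau)$, I would treat each direction separately and absorb the small-data factors at the end.

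For the bound $\mathrm{S}_1[\psi] \lesssim \mathcal{E}[\psi]$, I would first note that $\|\psi\|^2_{L^2} \leq 2\mathcal{E}[\psi]$ is immediate from the definition of $\mathcal{E}[\psi]$. Then \Cref{prop:gradslashphibound} controls $\|\slashgrad\psi\|^2_{L^2}$ by $\mathcal{E}[\psi]$ provided $\mathrm{S}_1[\mathbf{A}]$ is small, and \Cref{prop:phidotbound} controls $\|\dot\psi\|^2_{L^2}$ by $(1+\mathrm{S}_1[A_0])\mathcal{E}[\psi]$ under the same smallness hypothesis. Adding these three estimates gives
\[ \mathrm{S}_1[\psi] \lesssim (1+\mathrm{S}_1[A_0])\,\mathcal{E}[\psi] \lesssim (1+\mathrm{S}_1[A])\,\mathcal{E}[\psi], \]
and choosing $\epsilon$ small enough so that both $\mathrm{S}_1[\mathbf{A}] \leq \mathrm{S}_1[A] \leq \epsilon$ triggers \Cref{prop:gradslashphibound,prop:phidotbound} and makes the $(1+\mathrm{S}_1[A])$ factor bounded, we obtain $\mathrm{S}_1[\psi] \lesssim \mathcal{E}[\psi]$.

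For the reverse bound $\mathcal{E}[\psi] \lesssim \mathrm{S}_1[\psi]$, I would use the remaining two propositions. \Cref{prop:Dslashphismall} gives $\|\slashed{\D}\psi\|^2_{L^2} \lesssim \mathrm{S}_1[\psi](1+\mathrm{S}_1[\mathbf{A}])$, \Cref{prop:D0phismall} gives $\|\D_0\psi\|^2_{L^2} \lesssim (1+\mathrm{S}_1[A_0])\mathrm{S}_1[\psi]$, and $\|\psi\|^2_{L^2} \leq \mathrm{S}_1[\psi]$ trivially. Summing yields
\[ \mathcal{E}[\psi] \lesssim (1+\mathrm{S}_1[A])\,\mathrm{S}_1[\psi], \]
and again absorbing the bounded factor under the smallness hypothesis $\mathrm{S}_1[A] \leq \epsilon$ gives the desired bound. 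Neither direction involves any genuine obstacle since all the analytic work—Sobolev embedding $H^1(\mathbb{S}^3) \hookrightarrow L^6$, splitting of gauge-covariant derivatives into partial derivatives and potential terms, and the small-data absorption trick—has already been carried out in the lemmas above; the only mild subtlety is to pick a single $\epsilon$ small enough for both \Cref{prop:gradslashphibound} and \Cref{prop:phidotbound} simultaneously, which is harmless since their hypotheses are identical in form.
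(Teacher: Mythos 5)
Your proposal is correct and follows essentially the same route as the paper: the paper's proof likewise just invokes \Cref{prop:gradslashphibound,prop:phidotbound} for the bound $\mathrm{S}_1[\psi] \la \mathcal{E}[\psi]$ and \Cref{prop:Dslashphismall,prop:D0phismall} for the reverse, absorbing the $(1+\mathrm{S}_1[A_0])$ and $(1+\mathrm{S}_1[\mathbf{A}])$ factors using the smallness hypothesis. Your explicit handling of the $\|\psi\|^2_{L^2}$ terms and the choice of a single $\epsilon$ is a harmless elaboration of the same argument.
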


\begin{proof} Suppose $\mathrm{S}_1[A]$ is small. Then in particular both $\mathrm{S}_1[\mathbf{A}]$ and $\mathrm{S}_1[A_0]$ are small, so by \Cref{prop:gradslashphibound} $\| \slashgrad \psi \|^2_{L^2} \la \mathcal{E}[\psi]$. By \Cref{prop:phidotbound}, $\| \dot{\psi} \|^2_{L^2} \la \mathcal{E}[\psi]$, so
\[ \mathrm{S}_1[\psi] \la \mathcal{E}[\psi]. \]
Conversely, by \Cref{prop:Dslashphismall,prop:D0phismall}, $\| \slashed{\D} \psi \|^2_{L^2} \la \mathrm{S}_1[\psi]$ and $\| \D_0 \psi \|^2_{L^2} \la \mathrm{S}_1[\psi]$, so
\[ \mathcal{E}[\psi] \la \mathrm{S}_1[\psi]. \]
\end{proof}

\noindent In particular, $\mathcal{E}[\phi] \simeq \mathrm{S}_1[\phi]$ and $\mathcal{E}[\slashgrad \phi] \simeq \mathrm{S}_1[\slashgrad \phi]$. Since $\mathrm{S}_1[\phi] + \mathrm{S}_1[\slashgrad \phi] \simeq \mathrm{S}_2[\phi]$, one then has
\begin{equation} \label{Sobolev2equivalencephi} \mathcal{E}_\tau[\phi] + \mathcal{E}_\tau[\slashgrad \phi] \simeq \mathrm{S}_2[\phi](\tau) \end{equation}
if $\mathrm{S}_1[A](\tau)$ is sufficiently small. Similarly,
\[ \sum_{k=0}^{m-1} \mathcal{E}_\tau[\slashgrad^k \phi] \simeq \mathrm{S}_m[\phi](\tau) \]
if $\mathrm{S}_1[A](\tau)$ is sufficiently small.

\subsection{Elliptic Estimates}

As we have already seen, one useful feature of the Coulomb gauge is that the field equation for $A_0$ becomes elliptic,
\begin{equation} \label{ellipticequationCG} - \slashed{\Delta} A_0 + |\phi|^2 A_0 = - \operatorname{Im}( \bar{\phi} \dot{\phi} ). \end{equation}
Even though the component $A_0$ is non-dynamical, it still carries energy. This energy is controlled by $\dot{\phi}$ as follows.

\begin{proposition} \label{prop:A0estimates} The non-dynamical component $A_0$ satisfies the estimates
\[ \| \slashgrad A_0 \|^2_{L^2}(\tau) + \| \phi A_0 \|^2_{L^2}(\tau) + \| A_0 \|^2_{L^2}(\tau) \la \| \dot{\phi} \|^2_{L^2}(\tau) \]
for every fixed $\tau \in \mathbb{R}$.
\end{proposition}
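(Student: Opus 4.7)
The plan is a standard energy-type argument on the elliptic equation \eqref{ellipticequationCG} combined with the Poincar\'e inequality that the strong Coulomb gauge makes available. Throughout I suppress the $\tau$ variable and integrate over $\mathbb{S}^3_\tau$ with respect to $\dvol_{\mathfrak{s}_3}$.

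First, I would multiply the elliptic equation
\[ -\slashed{\Delta} A_0 + |\phi|^2 A_0 = -\operatorname{Im}(\bar{\phi}\dot{\phi}) \]
by $A_0$ (which is real) and integrate. Integration by parts on $\mathbb{S}^3$, which has no boundary, yields
\[ \| \slashgrad A_0 \|^2_{L^2} + \| \phi A_0 \|^2_{L^2} = -\int_{\mathbb{S}^3} A_0 \operatorname{Im}(\bar{\phi}\dot{\phi}) \, \dvol_{\mathfrak{s}_3}. \]

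Second, I would bound the right-hand side using Cauchy--Schwarz and Young's inequality, pulling out the factor of $|\phi|$ to match the $\|\phi A_0\|^2_{L^2}$ term on the left:
\[ \left| \int_{\mathbb{S}^3} A_0 \operatorname{Im}(\bar{\phi}\dot{\phi}) \, \dvol_{\mathfrak{s}_3} \right| \leq \| \phi A_0 \|_{L^2} \| \dot{\phi} \|_{L^2} \leq \tfrac{1}{2} \| \phi A_0 \|^2_{L^2} + \tfrac{1}{2} \| \dot{\phi} \|^2_{L^2}. \]
Absorbing the $\|\phi A_0\|^2_{L^2}$ term into the left gives
\[ \| \slashgrad A_0 \|^2_{L^2} + \tfrac{1}{2} \| \phi A_0 \|^2_{L^2} \leq \tfrac{1}{2} \| \dot{\phi} \|^2_{L^2}. \]

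Finally, to recover the $\|A_0\|^2_{L^2}$ piece, I would invoke the Poincar\'e inequality on $\mathbb{S}^3$. This is precisely where the second component $\bar{A}_0 = 0$ of the strong Coulomb gauge is used: since $A_0$ has zero mean on $\mathbb{S}^3_\tau$, it is orthogonal to the kernel of $\slashed{\Delta}$ and hence
\[ \| A_0 \|^2_{L^2} \la \| \slashgrad A_0 \|^2_{L^2} \la \| \dot{\phi} \|^2_{L^2}. \]
Adding the three controlled quantities gives the claimed estimate. There is no real obstacle here; the only non-routine observation is that the strong Coulomb gauge, rather than the ordinary Coulomb gauge, is what promotes the gradient control on $A_0$ to an $L^2$ control, and that the cross-term on the right-hand side of the tested equation factors conveniently as $|\phi A_0|\,|\dot{\phi}|$ so that the quadratic term on the left is what absorbs it.
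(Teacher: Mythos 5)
Your proposal is correct and is essentially identical to the paper's proof: test the elliptic equation for $A_0$ against $A_0$, bound the source term by $\|\phi A_0\|_{L^2}\|\dot{\phi}\|_{L^2}$, absorb via Young's inequality, and use the Poincar\'e inequality afforded by $\bar{A}_0 = 0$ to recover the $L^2$ control. No differences worth noting.
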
 

\begin{proof} Multiplying equation \eqref{ellipticequationCG} by $A_0$ and integrating, we have
\[ \| \slashgrad A_0 \|^2_{L^2} + \| \phi A_0 \|^2_{L^2} = - \int_{\mathbb{S}^3} \Im(\bar{\phi} \dot{\phi}) A_0 \dvol_{\mathfrak{s}_3} \leq \| \phi A_0 \|_{L^2} \| \dot{\phi} \|_{L^2} \leq \frac{1}{2} \| \phi A_0 \|^2_{L^2} + \frac{1}{2} \| \dot{\phi} \|^2_{L^2}, \]
which gives the first two estimates. The third estimate follows from the Poincar\'e inequality for $A_0$.
\end{proof}

We will need these estimates to extend energy smallness assumptions on $\mathbf{A}$ and $\phi$ to $A_0$.

\section{$H^1$ and $H^2$ Energy Estimates} \label{sec:energyestimates}

\subsection{Conservation of Energy}

For general $\alpha$, $\psi$ one finds that
\begin{align} \begin{split} \label{generalerrorterms} & \nabla^a \mathbf{T}_{ab}[\alpha] = \mathfrak{M}(\alpha)^a \left( \nabla_a \alpha_b - \nabla_b \alpha_a \right), \\ & \nabla^a \mathbf{T}_{ab}[\psi] = \frac{1}{2} \overline{\mathfrak{S}(\psi)} \D_b \psi + \frac{1}{2} \mathfrak{S}(\psi) \overline{\D_b \psi} +  F_{ab} \operatorname{Im}\left( \bar{\psi} \D^a \psi \right). \end{split}
\end{align}
When $\alpha_a = A_a$ and $\psi = \phi$, the field equations $\mathfrak{M}(A)_a = - \operatorname{Im}\left( \bar{\phi} \D_a \phi \right)$ and $\mathfrak{S}(\phi) = 0$ imply that
\[ \nabla^a \mathbf{T}_{ab}[\phi,A] = \nabla^a ( \mathbf{T}_{ab}[A] + \mathbf{T}_{ab}[\phi]) = F_{ab} \left( \operatorname{Im}\left( \bar{\phi} \D^a \phi \right) - \operatorname{Im}\left( \bar{\phi} \D^a \phi \right) \right) = 0. \]

\subsection{\texorpdfstring{$H^1$}{H1} estimates}

Consider admissible initial data for the system \eqref{phiAequationsCGprojected}. We can make no a priori assumptions about the smallness of the non-dynamical component $A_0$, but we will of course be able to extract all the required information about $A_0$ using the elliptic equation \eqref{ellipticequationCG}.

\begin{theorem} \label{thm:H1estimates} There exists an $\epsilon > 0$ such that if $\mathrm{S}_1[\phi, \mathbf{A}](0) \leq \epsilon$, then
\[ \mathrm{S}_1[\phi, A](\tau) \simeq \mathrm{S}_1[\phi, A](0) \]
for all $\tau \in \mathbb{R}$.
\end{theorem}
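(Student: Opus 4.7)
The plan is to combine three ingredients already in hand: conservation of the total geometric energy $\mathcal{E}_\tau[\phi, A]$ established in \Cref{sec:energyestimates}, the sector-wise equivalences between the geometric and Sobolev energies, namely~\eqref{Maxwellphysicalsobolevequivalence1} for the Maxwell sector and \Cref{thm:scalarsobolevandphysicalequivalence} for the scalar sector, and the elliptic control of $A_0$ from \Cref{prop:A0estimates}. The crucial structural point, which obviates any bootstrap, is that in strong Coulomb gauge both $\mathbf{T}_{00}[\phi]$ and $\mathbf{T}_{00}[A]$ integrate to non-negative quantities, so the total conserved energy individually controls each sector.

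First I would upgrade the initial hypothesis of $\mathrm{S}_1[\phi, \mathbf{A}](0)$-smallness to smallness of the full $\mathrm{S}_1[\phi, A](0)$ by applying \Cref{prop:A0estimates} on the initial slice: this gives $\mathrm{S}_1[A_0](0) \la \|\dot{\phi}\|^2_{L^2}(0) \la \mathrm{S}_1[\phi](0) \leq \epsilon$. With $\mathrm{S}_1[A](0)$ now small, \Cref{thm:scalarsobolevandphysicalequivalence} applies at $\tau = 0$, and combining it with the unconditional Maxwell equivalence~\eqref{Maxwellphysicalsobolevequivalence1} yields $\mathcal{E}_0[\phi, A] \simeq \mathrm{S}_1[\phi, A](0) \la \epsilon$.

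The engine is then a short chain of implications: conservation gives $\mathcal{E}_\tau[\phi, A] = \mathcal{E}_0[\phi, A]$ for every $\tau$; non-negativity of the scalar sector gives $\mathcal{E}_\tau[A] \leq \mathcal{E}_\tau[\phi, A]$; and the \emph{unconditional} Maxwell equivalence~\eqref{Maxwellphysicalsobolevequivalence1} converts this into $\mathrm{S}_1[A](\tau) \simeq \mathcal{E}_\tau[A] \la \epsilon$ for every $\tau$. This is the key step, and it works precisely because \eqref{Maxwellphysicalsobolevequivalence1} comes with no smallness hypothesis. Once $\mathrm{S}_1[A](\tau)$ is known to be small on every slice, \Cref{thm:scalarsobolevandphysicalequivalence} applies pointwise in $\tau$ to give $\mathcal{E}_\tau[\phi] \simeq \mathrm{S}_1[\phi](\tau)$, and adding in the Maxwell equivalence one last time yields $\mathrm{S}_1[\phi, A](\tau) \simeq \mathcal{E}_\tau[\phi, A] = \mathcal{E}_0[\phi, A] \simeq \mathrm{S}_1[\phi, A](0)$, as claimed.

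I anticipate no serious obstacle at first order: the argument is essentially a chain of equivalences once the smallness of $\mathrm{S}_1[A]$ has been propagated, which in turn follows for free from non-negativity of the sector energies together with the unconditional Maxwell equivalence. The only delicate point is the reverse direction of \Cref{thm:scalarsobolevandphysicalequivalence} at $\tau = 0$, which is why the elliptic estimate for $a_0$ is essential to establish initial smallness of $\mathrm{S}_1[A_0]$ from the dynamical data alone. I expect the situation to be qualitatively different at higher orders, where commuting $\slashgrad^k$ into the field equations produces nonlinear error terms that the geometric energy no longer conserves exactly, so the analogous argument there will have to be closed by a Gr\"onwall-type inequality rather than by pure algebra.
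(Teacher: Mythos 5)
Your proposal is correct and follows essentially the same route as the paper's proof: initial smallness of $\mathrm{S}_1[A_0]$ via the elliptic estimate of \Cref{prop:A0estimates}, conservation of the total geometric energy, the unconditional Maxwell equivalence \eqref{Maxwellphysicalsobolevequivalence1} together with non-negativity of the sector energies to propagate smallness of $\mathrm{S}_1[A](\tau)$, and then \Cref{thm:scalarsobolevandphysicalequivalence} applied on each slice. You have merely made explicit the non-negativity step that the paper leaves implicit.
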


\begin{proof} Since $\nabla^a \mathbf{T}_{ab} [\phi, A] = 0$ and $T^b = \partial_\tau$ is Killing on $\mathfrak{E}$, integrating $\mathsf{e}_1 \defeq \nabla^a ( T^b \mathbf{T}_{ab}[\phi, A]) = 0$ over the region $\mathbb{S}^3 \times [0,\tau]$ for any $\tau > 0$ immediately gives
\[ 0 = \int_{\mathbb{S}^3 \times [0, \tau]} \mathsf{e}_1 \dvol = \int_{\mathbb{S}^3_\tau } \mathbf{T}_{00}[\phi,A] \dvol_{\mathfrak{s}_3} - \int_{\mathbb{S}^3_0 } \mathbf{T}_{00}[\phi,A] \dvol_{\mathfrak{s}_3},  \]
i.e. 
\begin{equation} \label{conservationofenergy} \mathcal{E}_\tau[\phi] + \mathcal{E}_\tau[A] = \mathcal{E}_{\tau}[\phi,A] = \mathcal{E}_0[\phi, A] = \mathcal{E}_0[\phi] + \mathcal{E}_0[A]. \end{equation}
Now the smallness assumption $\mathrm{S}_1[\phi, \mathbf{A}](0) \leq \epsilon$ implies that $\mathrm{S}_1[\mathbf{A}](0) \leq \epsilon$ and  $\mathrm{S}_1[\phi](0) \leq \epsilon$, so by \Cref{prop:A0estimates}
\[ \| \slashgrad A_0 \|^2_{L^2}(0) \la \mathrm{S}_1[\phi](0) \leq \epsilon, \]
and so $\mathrm{S}_1[A](0) \la \epsilon$. Then by \Cref{thm:scalarsobolevandphysicalequivalence}, $\mathcal{E}_0[\phi] \simeq \mathrm{S}_1[\phi](0)$. Now equation \eqref{Maxwellphysicalsobolevequivalence1} reads $\mathcal{E}_\tau[A] \simeq \mathrm{S}_1[A](\tau), $ which in particular holds at $\tau = 0$, so we have $\mathcal{E}_0[\phi] + \mathcal{E}_0[A] \simeq \mathrm{S}_1[\phi](0) + \mathrm{S}_1[A](0)$, and so by \cref{conservationofenergy} 
\[ \mathcal{E}_\tau[\phi] + \mathcal{E}_\tau[A] \simeq \mathrm{S}_1[\phi](0) + \mathrm{S}_1[A](0). \]
This means that $\mathcal{E}_\tau[\phi] + \mathcal{E}_\tau[A]$ is small too, $\mathcal{E}_\tau[\phi, A] \la \epsilon$. In particular, $\mathcal{E}_\tau[A] \simeq \mathrm{S}_1[A](\tau)$ is small, so again by \Cref{thm:scalarsobolevandphysicalequivalence}, $\mathcal{E}_\tau[\phi] \simeq \mathrm{S}_1[\phi](\tau)$. We deduce that
\begin{equation} \label{H1estimate} \mathrm{S}_1[\phi](\tau) + \mathrm{S}_1[A](\tau) \simeq \mathrm{S}_1[\phi](0) + \mathrm{S}_1[A](0) \end{equation}
for all $\tau > 0$. The same argument works for $\tau < 0$.

\end{proof}

\subsection{\texorpdfstring{$H^2$}{H2} estimates}

\subsubsection{A Nonlinear Gr\"onwall Inequality}

Some useful small data nonlinear Gr\"onwall inequalities may be proved by reduction to the standard Gr\"onwall inequality using a careful change of variables. More precisely, suppose $g(\tau)$ satisfies a nonlinear differential inequality, say
\[ g'(\tau) \leq F\left(g(\tau)\right). \]
If we can find a function $G$ such that 
\[ G(g(\tau))' = G'(g(\tau)) g'(\tau) \overset{!}{\leq} G(g(\tau)), \]
then we can apply the standard Gr\"onwall inequality to $G(\tau) \defeq G(g(\tau))$ and possibly invert $G(g)$ to recover an inequality for $g$. This will not in general produce an immediately useful statement due to the nonlinear nature of $F$ (and hence $G$), but with a smallness assumption on $g(0)$ the offending terms can frequently be dealt with. Clearly finding such a $G$ amounts to solving the differential inequality
\[ G'(g) F(g) \leq G(g). \]

\begin{lemma} \label{lem:nonlinearGronwall} Let $\tau \in [0,1]$ and $f:[0,1] \to \mathbb{R}$ be a continuous non-negative function. Suppose $f$ satisfies the inequality
\[ f(\tau) \leq f(0) + \int_0^\tau f(\sigma) P ( f(\sigma)^{1/2} ) \, \d \sigma \]
for some polynomial $P$ with positive coefficients. Then there exists $\epsilon > 0$ small enough such that if $f(0) \leq \epsilon$, then 
\[ f(\tau) \leq C f(0) \]
for some $C>1$ and all $\tau \in [0,1]$.
\end{lemma}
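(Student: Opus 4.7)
My plan is to exploit the integral inequality directly via a bootstrap argument combined with the standard linear Gr\"onwall inequality. First, I would introduce the auxiliary continuous non-negative function
\[ g(\tau) \defeq f(0) + \int_0^\tau f(\sigma) P(f(\sigma)^{1/2}) \, \d \sigma, \]
which by hypothesis satisfies $f(\tau) \leq g(\tau)$. By the fundamental theorem of calculus $g$ is $C^1$ with $g'(\tau) = f(\tau) P(f(\tau)^{1/2}) \geq 0$, so $g$ is non-decreasing. Since $P$ has positive coefficients it is non-decreasing on $[0, \infty)$, hence substituting $f \leq g$ yields the pointwise differential inequality
\[ g'(\tau) \leq g(\tau) P(g(\tau)^{1/2}). \]

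The second step is a standard bootstrap. Discarding the trivial case $P \equiv 0$, we have $P(1) > 0$; set $C \defeq \e^{P(1)} > 1$ and choose $\epsilon > 0$ so small that $C \epsilon < 1/2$. Given $f(0) \leq \epsilon$, so that $g(0) = f(0) \leq \epsilon$, define
\[ T^* \defeq \sup\{ T \in [0,1] : g(\tau) \leq 1 \text{ for all } \tau \in [0,T] \}. \]
Continuity of $g$ together with $g(0) \leq \epsilon < 1$ guarantees $T^* > 0$. On $[0, T^*]$, monotonicity of $P$ gives $P(g(\tau)^{1/2}) \leq P(1)$, whence the linear Gr\"onwall inequality applied to $g'(\tau) \leq P(1) g(\tau)$ produces
\[ g(\tau) \leq g(0) \e^{P(1) \tau} \leq C f(0) \leq C \epsilon < \tfrac{1}{2} \]
for all $\tau \in [0, T^*]$. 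This strictly improves the bootstrap bound $g(\tau) \leq 1$, so a standard continuity argument (using compactness of $[0,1]$ and continuity of $g$) forces $T^* = 1$. We then conclude $f(\tau) \leq g(\tau) \leq C f(0)$ throughout $[0,1]$.

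The main (and essentially only) obstacle is ensuring that $P(g^{1/2})$ is uniformly bounded so that one may invoke the linear Gr\"onwall inequality; the bootstrap achieves this using monotonicity of $P$ on $[0,\infty)$ together with compactness of the time interval $[0,1]$. No appeal to a subtler integrating factor $G$ with $G'(g) g P(g^{1/2}) \leq G(g)$ (as alluded to in the motivating discussion) is actually needed on a bounded time interval once smallness of $f(0)$ is available; such a device would become indispensable only if one wished to iterate the inequality over intervals whose length grew with decreasing $f(0)$, which is not required here.
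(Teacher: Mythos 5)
Your proof is correct, but it takes a genuinely different route from the paper's. You run a bootstrap (continuity) argument: majorize $f$ by the $C^1$ non-decreasing function $g$, use monotonicity of $x \mapsto xP(x^{1/2})$ to get $g' \leq g P(g^{1/2})$, freeze the nonlinearity on the region where $g \leq 1$ so that the \emph{linear} Gr\"onwall inequality applies with constant $P(1)$, and then close the bootstrap using $C\epsilon < 1/2$. The paper instead first crudely bounds $P(f^{1/2})$ by $D(1 + f^{d/2})$ (splitting the time integral according to whether $f \lessgtr 1$), and then constructs an explicit integrating factor $G(g) = g^{1/D}D^{-2/(Dd)}(1+g^{d/2})^{-2/(Dd)}$ satisfying $G' \leq G$, applies linear Gr\"onwall to $G$, and finally uses smallness of $g(0)$ to absorb the resulting algebraic inequality $g(\tau)^{d/2} \la g(0)^{d/2}(1+g(\tau)^{d/2})$ — which is in effect a bootstrap performed algebraically at the very end rather than via a continuity argument in $\tau$. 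Both proofs use compactness of the time interval and smallness of $f(0)$ in an essential way, so neither is more general in that respect; your version is shorter, avoids the somewhat delicate computation verifying $G' \leq G$, and yields the clean explicit constant $C = \e^{P(1)}$, whereas the paper's change-of-variables device is the one advertised in its preceding discussion and illustrates a reusable technique for converting nonlinear Gr\"onwall inequalities into linear ones. One point worth making explicit in your write-up: the set $\{\tau : g(\tau) \leq 1\}$ is closed, so the Gr\"onwall bound indeed holds up to and including $T^*$, which is what lets the strict improvement $g(T^*) < 1/2$ contradict maximality of $T^*$ when $T^* < 1$.
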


\begin{proof} The case when $P$ has order zero is trivial, so assume that $P (x) = \sum_{k=0}^d P_k x^k$ for some $d>0$ and some non-negative real numbers $\{ P_k \}_k$. We may reduce the inequality as follows,
\begin{align*} f(\tau) & \leq f(0) + \int_0^\tau f(\sigma) P(f(\sigma)^{1/2}) \, \d \sigma \\
& \leq f(0) + \int_0^\tau \sum_{k=0}^d P_k f(\sigma)^{k/2 + 1} \, \d \sigma \\
& \leq f(0)   + \int_{\{ 0 < \sigma < \tau \, : \, f(\sigma) < 1 \}} \sum_{k=0}^d P_k f(\sigma)^{k/2 + 1} \, \d \sigma \\
& + \int_{ \{ 0 < \sigma < \tau \, : \, f(\sigma) >1 \} } \sum_{k=0}^d P_k f(\sigma)^{k/2 + 1} \, \d \sigma \\
& \leq f(0) + \int_0^\tau \sum_{k=0}^d P_k f(\sigma) \, \d \sigma + \int_0^\tau \sum_{k=0}^d P_k f(\sigma)^{d/2 + 1} \, \d \sigma \\
& \leq f(0) + \int_0^\tau D f(\sigma) \, \d \sigma + \int_0^\tau D f(\sigma)^{d/2 + 1} \, \d \sigma, 
\end{align*}
where $D = (d+1) \max_k P_k$. Now set 
\[ g(\tau) \defeq f(0) + \int_0^\tau D f(\sigma) \, \d \sigma + \int_0^\tau D f(\sigma)^{d/2 +1} \, \d \sigma.  \]
Then $f(\tau) \leq g(\tau)$, $f(0) = g(0)$, and $g'(\tau) \leq D f(\tau) + D f(\tau)^{d/2 + 1} \leq D g(\tau) \left(1 + g(\tau)^{d/2}\right)$. Defining
\[ G(\tau) \defeq g(\tau)^{1/D} D^{-2/(Dd)} \left( 1+ g(\tau)^{d/2} \right)^{-2/(Dd)} \]
and differentiating, one obtains
\begin{align*} G'(\tau) & = g'(\tau) g(\tau)^{1/D -1} D^{-2/(Dd) - 1} \left( 1 + g(\tau)^{d/2} \right)^{-2/(Dd) - 1} \\
& \leq g(\tau)^{1/D} D^{-2(Dd)} \left( 1 + g(\tau)^{d/2} \right)^{-2/(Dd)},	
\end{align*}
so that $G'(\tau) \leq G(\tau)$. Since $\tau$ is contained in a compact interval, this gives $G(\tau) \la G(0)$, or equivalently
\begin{align*} g(\tau)^{1/D} \left( 1 + g(\tau)^{d/2} \right)^{-2/(Dd)} & \la g(0)^{1/D} \left( 1 + g(0)^{d/2} \right)^{-2/(Dd)} \\
& \la g(0)^{1/D}.
\end{align*}
Rearranging gives
\[ g(\tau)^{d/2} \la g(0)^{d/2} \left( 1 + g(\tau)^{d/2} \right), \]
so if $g(0) = f(0)$ is small enough one has $g(\tau)^{d/2} \la g(0)^{d/2}$ and so
\[ f(\tau) \leq g(\tau) \leq C g(0) \leq C f(0). \]
\end{proof}

\begin{remark} Clearly the above proof goes through exactly the same with $[0,1]$ replaced with any interval $[0,r]$, $r \in \mathbb{R}$.
\end{remark}

\subsubsection{Commutators}

\begin{proposition} One has the following bounds on the commutators of $\slashgrad$ with the field equation operators $\mathfrak{M}$ and $\mathfrak{S}$:
\[ \left| [ \slashgrad, \, \mathfrak{M}]A \right|_{\mathbb{S}^3} \la |\slashgrad^2 \mathbf{A} | + | \slashgrad \mathbf{A} | + | \slashgrad \dot{A}_0 |, \]
and
\begin{align*} \left| [ \slashgrad, \, \mathfrak{S} ] (\phi) \right| & \la |\dot{\phi} \slashgrad A_0 | + |\phi \slashgrad \dot{A}_0 | + |\phi A_0 \slashgrad A_0 | + |\slashgrad^2 \phi | + |\phi \slashgrad^2 \mathbf{A} | \\
& + | \mathbf{A} \slashgrad \phi | + | \slashgrad \phi | + | \phi \slashgrad \mathbf{A} | + | \mathbf{A} \phi | + | \slashgrad \phi \slashgrad \mathbf{A} | + | \phi \mathbf{A} \slashgrad \mathbf{A} |.
\end{align*}

\end{proposition}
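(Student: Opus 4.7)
My strategy is direct computation of both commutators, exploiting the product structure $\mathfrak{E} = \mathbb{R} \times \mathbb{S}^3$ and the constant-curvature geometry of $\mathbb{S}^3$. Because $\mathfrak{e} = g_{\mathbb{R}} \oplus (-\mathfrak{s}_3)$, the full connection decomposes as $\nabla = \partial_\tau \oplus \slashgrad$, so $[\partial_\tau, \slashgrad_i] = 0$ and only the purely spatial derivatives can generate nontrivial commutators. Moreover, the Ricci tensor $R_{ab}$ and the Riemann tensor of $\mathfrak{s}_3$ are covariantly constant, so commutators of $\slashgrad_i$ with these structures vanish, and the only nonvanishing geometric terms come from the identity $[\slashgrad_i, \slashgrad_\mu] T = -(\slashgrad_\mu X_i^\nu)\slashgrad_\nu T + \operatorname{Riem}(\mathfrak{s}_3) \cdot T$ and its iterates, whose coefficients are uniformly bounded on the compact manifold $\mathbb{S}^3$.

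For $[\slashgrad, \mathfrak{M}]A$ (whose $\mathbb{S}^3$-projection is what is being estimated) I expand the spatial components
\[ \mathfrak{M}(A)_\mu = \Box A_\mu - \slashgrad_\mu(\dot A_0 - \slashgrad\cdot\mathbf{A}) + R_{\mu\nu} A^\nu \]
and commute $\slashgrad_i$ past each piece. The contributions split into three families: one from $[\slashgrad_i, \slashed{\Delta}]\mathbf{A}$ inside $\Box$, one from $[\slashgrad_i, \slashgrad_\mu\slashgrad\cdot]\mathbf{A}$, and one from $[\slashgrad_i, \slashgrad_\mu]\dot A_0$. By the Leibniz rule together with the commutator identity above, each family produces at worst a term of the form $(\slashgrad X_i)\cdot \slashgrad^2 \mathbf{A}$ or $(\slashgrad X_i)\cdot\slashgrad \dot A_0$, plus lower-order pieces controlled by $|\slashgrad \mathbf{A}|$. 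Although we work in Coulomb gauge so that $\slashgrad\cdot\mathbf{A} = 0$, the differentiated 1-form $\slashgrad_i\mathbf{A}$ is not divergence-free, which is precisely why $|\slashgrad^2\mathbf{A}|$ survives on the right-hand side.

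For $[\slashgrad, \mathfrak{S}]\phi$ I use the form
\[ \mathfrak{S}(\phi) = \Box\phi + 2iA_0\dot\phi - 2i\mathbf{A}\cdot\slashgrad\phi + \bigl(1 - A_0^2 + |\mathbf{A}|^2 + i\dot A_0 - i\slashgrad\cdot\mathbf{A}\bigr)\phi \]
and commute $\slashgrad_i$ through each term via Leibniz. The $\Box\phi$ piece yields $|\slashgrad^2\phi|$ and $|\slashgrad\phi|$ exactly as in the Maxwell case, with the $|\slashgrad^2\phi|$ arising from $(\slashgrad X_i)\cdot\slashgrad^2\phi$ in $[\slashgrad_i, \slashed{\Delta}]\phi$. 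The first-order coupling terms $A_0\dot\phi$ and $\mathbf{A}\cdot\slashgrad\phi$ produce $|\dot\phi\,\slashgrad A_0|$, $|\slashgrad\phi\,\slashgrad\mathbf{A}|$ and $|\mathbf{A}\,\slashgrad\phi|$; the quadratic potential produces $|\phi A_0\,\slashgrad A_0|$ and $|\phi\mathbf{A}\,\slashgrad\mathbf{A}|$; and the gauge term $i(\dot A_0 - \slashgrad\cdot\mathbf{A})\phi$ contributes $|\phi\,\slashgrad\dot A_0|$ and $|\phi\,\slashgrad^2\mathbf{A}|$. The remaining pieces $|\phi\,\slashgrad\mathbf{A}|$ and $|\mathbf{A}\phi|$ come from the $\slashgrad X_i$ and Riemann-tensor terms in the commutator $[\slashgrad_i, \slashgrad_\mu]$ acting on the first-order coupling.

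The main obstacle is not analytic but combinatorial: one must carefully track all Leibniz and commutator terms to verify that each fits into the schematic list given in the statement, and in particular that no terms involving $|\slashgrad A_0|$ alone or $|\slashgrad^2 A_0|$ appear in the Maxwell bound. The key reduction throughout is that every geometric coefficient, whether a derivative of $X_i$ or a curvature component of $\mathbb{S}^3$, is uniformly bounded, so these factors may be absorbed into the implicit constant in $\la$; what remains is exactly the algebraic structure of the nonlinear products on the right-hand side.
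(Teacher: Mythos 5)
Your computation is correct and follows essentially the same route as the paper: a direct term-by-term commutation of $\slashgrad_i = X_i^\mu \slashgrad_\mu$ through the field operators, using $[\partial_\tau,\slashgrad_i]=0$ and absorbing the uniformly bounded factors $\slashgrad X_i$ and the curvature of $\mathbb{S}^3$ into the implicit constant. The only (harmless) organizational difference is in the scalar sector, where the paper computes $[\slashgrad_i,\D_0\D_0]$ and $[\slashgrad_i,\slashed{\D}_\mu\slashed{\D}^\mu]$ via the elementary commutators $[\slashgrad_i,\D_0]\psi = i\psi\slashgrad_i A_0$ and $[\slashgrad_i,\slashed{\D}_\mu]\psi = -(\slashgrad_\mu X_i^\nu)\slashgrad_\nu\psi + i\psi\slashgrad_i\mathbf{A}_\mu$ rather than expanding $\mathfrak{S}$ into its fully written-out component form, but both bookkeepings produce exactly the terms listed.
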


\begin{proof} Note that in the following the index $i$ always refers to a contraction with a basis vector field $X_i$. Recall that the operator $\mathfrak{M}_\mu$ on $A$ is given by $\mathfrak{M}(A)_\mu = \Box \mathbf{A}_\mu - \slashgrad_\mu \dot{A}_0 - 2 \mathbf{A}_\mu$, so for any $i$
\begin{align*} |[ \slashgrad_i, \, \mathfrak{M}](A)|_{\mathbb{S}^3} & = | \slashgrad_i \mathfrak{M}(A)_\mu - \mathfrak{M}(\slashgrad_i A)_\mu | \\ & = \left| \slashgrad_i \left( \Box \mathbf{A}_\mu - \slashgrad_\mu \dot{A}_0 - 2 \mathbf{A}_\mu \right) - \Box ( \slashgrad_i \mathbf{A}_\mu ) + \slashgrad_\mu \slashgrad_i \dot{A}_0 + 2 \slashgrad_i \mathbf{A}_\mu  \right| \\ & = \left| \slashgrad_i \slashgrad^\nu \slashgrad_\nu \mathbf{A}_\mu - \slashgrad^\nu \slashgrad_\nu ( X_i^\lambda \slashgrad_\lambda \mathbf{A}_\mu ) + \slashgrad_\mu X_i^\nu \slashgrad_\nu \dot{A}_0 \right| \\ & \leq C \left[ | \slashgrad^2 \mathbf{A} | + | \slashgrad \mathbf{A} | + | \slashgrad \dot{A}_0 | \right], 
\end{align*}
where the constant $C$ depends on the geometry of $\mathbb{S}^3$. To calculate the other commutator we need a couple of preliminary formulae. Let $\psi$ be any sufficiently regular complex scalar field. Then
\[ [ \slashgrad_i, \, \D_0 ] (\psi) = \slashgrad_i( \dot{\psi} + i A_0 \psi ) - \D_0 \slashgrad_i \psi = i \psi \slashgrad_i A_0, \]
and similarly
\[ [\slashgrad_i, \, \slashed{\D}_\mu](\psi) = - (\slashgrad_\mu X_i^\nu ) \slashgrad_\nu \psi + i \psi \slashgrad_i \mathbf{A}_\mu, \]
so 
\begin{align*} [\slashgrad_i, \, \D_0 \D_0](\phi) & = \D_0 [\slashgrad_i, \, \D_0 ] (\phi) + [ \slashgrad_i, \, \D_0] (\D_0 \phi) \\ &= \D_0 ( i \phi \slashgrad_i A_0) + i \D_0 \phi \slashgrad_i A_0 \\ & = i \phi \slashgrad_i \dot{A}_0 + 2 i \dot{\phi} \slashgrad_i A_0 - 2 \phi A_0 \slashgrad_i A_0.
\end{align*}
Further, for any vector field $\mathbf{V}$ on $\mathbb{S}^3$
\begin{align*} [\slashgrad_i, \, \slashed{\D}_\mu] \mathbf{V}^\mu & = \slashgrad_i (\slashgrad_\mu \mathbf{V}^\mu + i \mathbf{A}_\mu \mathbf{V}^\mu ) - (\slashgrad_\mu + i \mathbf{A}_\mu ) ( \slashgrad_i \mathbf{V}^\mu ) \\ &= \slashgrad_i \slashgrad_\mu \mathbf{V}^\mu - \slashgrad_\mu \slashgrad_i \mathbf{V}^\mu + i (\slashgrad_i \mathbf{A}_\mu) \mathbf{V}^\mu \\ & \leq C \left[ | \slashgrad \mathbf{V} | + | \mathbf{V} | + | \mathbf{V} \slashgrad \mathbf{A} |  \right], 
\end{align*}
where, as before, $C$ depends on the geometry of $\mathbb{S}^3$. Then
\begin{align*} [\slashgrad_i, \, \slashed{\D}_\mu \slashed{\D}^\mu]\phi & = \slashed{\D}^\mu [ \slashgrad_i, \, \slashed{\D}_\mu]\phi + [\slashgrad_i, \, \slashed{\D}_\mu] \slashed{\D}^\mu \phi \\ 
& \leq \slashed{\D}^\mu \left( - \slashgrad_\mu X_i^\nu \slashgrad_\nu \phi + i \phi \slashgrad_i \mathbf{A}_\mu \right) + C \left[ | \slashgrad \slashed{\D} \phi | + | \slashed{\D} \phi | + | \slashed{\D} \phi \slashgrad \mathbf{A} | \right] \\ 
& \leq - \slashed{\Delta} X_i^\nu \slashgrad_\nu \phi - \slashgrad_\mu X_i^\nu \slashgrad^\mu \slashgrad_\nu \phi + i \slashgrad^\mu \phi \slashgrad_i \mathbf{A}_\mu + i \phi \slashgrad_\mu \slashgrad_i \mathbf{A}_\mu \\
& - i \mathbf{A}^\mu \slashgrad_\mu X_i^\nu \slashgrad_\nu \phi - \phi \mathbf{A}^\mu \slashgrad_i \mathbf{A}_\mu \\
& + C \left[ | \slashgrad^2 \phi | + |\slashgrad(\mathbf{A} \phi)| + | \slashgrad \phi | + | \mathbf{A} \phi | + | \slashgrad \phi \slashgrad \mathbf{A} | + | \mathbf{A} \phi \slashgrad \mathbf{A} | \right] \\ 
& \la |\slashgrad \phi | + |\slashgrad^2 \phi | + |\slashgrad \phi \slashgrad \mathbf{A} | + | \phi \slashgrad^2 \mathbf{A} | + |\phi \slashgrad \mathbf{A} | + | \mathbf{A} \slashgrad \phi | \\
& + | \phi \mathbf{A} \slashgrad \mathbf{A} | + |\mathbf{A} \phi | .
\end{align*}
Putting these together, we have
\begin{align*} [\slashgrad_i, \, \mathfrak{S}]\phi &= [\slashgrad_i, \, \D^a \D_a + 1]\phi \\ 
&= [ \slashgrad_i, \, \D_0 \D_0 ] \phi - [\slashgrad_i, \, \slashed{\D}^\mu \slashed{\D}_\mu ] \phi \\ 
& \la | \phi \slashgrad \dot{A}_0 | + | \dot{\phi} \slashgrad A_0 | + | \phi A_0 \slashgrad A_0 | +  |\slashgrad \phi | + |\slashgrad^2 \phi | + |\slashgrad \phi \slashgrad \mathbf{A} | \\
& + | \phi \slashgrad^2 \mathbf{A} | + |\phi \slashgrad \mathbf{A} | + | \mathbf{A} \slashgrad \phi | + | \phi \mathbf{A} \slashgrad \mathbf{A} | + |\mathbf{A} \phi | .
\end{align*}
\end{proof}

\noindent Most of the terms in the above estimates we can control by the energy directly, with the exception of time derivatives of $A_0$. These terms we shall control using the elliptic equation for $A_0$ and the evolution equation for $\phi$.

\begin{proposition} \label{prop:A0dotH1bound} For any fixed $\tau \in \mathbb{R}$ there exists $\epsilon > 0$ such that if $\mathrm{S}_1[\phi] < \epsilon$ and $A_a$ satisfies the strong Coulomb gauge, then
\[ \| \dot{A}_0 \|^2_{H^1}(\tau) \la \mathrm{S}_2[\phi](\tau) ( 1 + \mathrm{S}_1[A](\tau))^2. \]
\end{proposition}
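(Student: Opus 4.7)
My approach begins by differentiating the elliptic equation $-\slashed{\Delta} A_0 + |\phi|^2 A_0 = -\operatorname{Im}(\bar{\phi}\dot{\phi})$ in $\tau$. Using $\operatorname{Im}(\dot{\bar{\phi}}\dot{\phi})=\operatorname{Im}|\dot{\phi}|^2=0$ on the right-hand side, this yields
\[
-\slashed{\Delta}\dot{A}_0 \;+\; |\phi|^2 \dot{A}_0 \;+\; 2\operatorname{Re}(\bar{\phi}\dot{\phi})\,A_0 \;=\; -\operatorname{Im}(\bar{\phi}\ddot{\phi}).
\]
I would then substitute for $\ddot{\phi}$ from the wave equation in \eqref{phiAequationsCGprojected}, i.e. $\ddot{\phi}=\slashed{\Delta}\phi-2iA_0\dot{\phi}+2i\mathbf{A}\cdot\slashgrad\phi-(1-A_0^2+|\mathbf{A}|^2+i\dot{A}_0)\phi$. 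The real-valued coefficient $1-A_0^2+|\mathbf{A}|^2$ contributes $0$ under $\operatorname{Im}(\,\cdot\,|\phi|^2)$, and what survives is $\operatorname{Im}(\bar{\phi}\slashed{\Delta}\phi)-2A_0\operatorname{Re}(\bar{\phi}\dot{\phi})+2\mathbf{A}\cdot\operatorname{Re}(\bar{\phi}\slashgrad\phi)-\dot{A}_0|\phi|^2$. The last two terms precisely cancel the $|\phi|^2\dot{A}_0$ and $2\operatorname{Re}(\bar{\phi}\dot{\phi})A_0$ terms on the left, leaving the clean Poisson equation
\[
-\slashed{\Delta}\dot{A}_0 \;=\; -\operatorname{Im}(\bar{\phi}\slashed{\Delta}\phi) \;-\; 2\mathbf{A}\cdot\operatorname{Re}(\bar{\phi}\slashgrad\phi).
\]

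I would then pair this equation with $\dot{A}_0$ and integrate over $\mathbb{S}^3$. Integrating by parts on the $\slashed{\Delta}\phi$ term and using $\operatorname{Im}(\overline{\slashgrad\phi}\cdot\slashgrad\phi)=0$ to drop the boundary divergence contribution gives
\[
\|\slashgrad\dot{A}_0\|^2_{L^2} \;=\; \int_{\mathbb{S}^3}\slashgrad\dot{A}_0\cdot\operatorname{Im}(\bar{\phi}\slashgrad\phi)\,\dvol_{\mathfrak{s}_3} \;-\; 2\int_{\mathbb{S}^3}\dot{A}_0\,\mathbf{A}\cdot\operatorname{Re}(\bar{\phi}\slashgrad\phi)\,\dvol_{\mathfrak{s}_3}.
\]
Since the strong Coulomb gauge $\bar{A}_0\equiv 0$ differentiates to $\overline{\dot{A}_0}\equiv 0$, the Poincar\'e inequality on $\mathbb{S}^3$ lets me pass freely between $\|\slashgrad\dot{A}_0\|^2_{L^2}$ and $\|\dot{A}_0\|^2_{H^1}$.

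The rest is routine Sobolev bookkeeping. For the first integral, Cauchy--Schwarz together with $\|\phi\slashgrad\phi\|_{L^2}\leq\|\phi\|_{L^\infty}\|\slashgrad\phi\|_{L^2}\la\mathrm{S}_2[\phi]^{1/2}\mathrm{S}_1[\phi]^{1/2}$, via the embedding $H^2(\mathbb{S}^3)\hookrightarrow L^\infty(\mathbb{S}^3)$, controls the first term by $\|\slashgrad\dot{A}_0\|_{L^2}\cdot\mathrm{S}_2[\phi]^{1/2}\mathrm{S}_1[\phi]^{1/2}$. For the quartic second integral, a H\"older split with exponents $(6,6,6,2)$, the embedding $H^1(\mathbb{S}^3)\hookrightarrow L^6(\mathbb{S}^3)$, and the Poincar\'e inequality for $\dot{A}_0$ produce a bound by $\|\slashgrad\dot{A}_0\|_{L^2}\,\mathrm{S}_1[\mathbf{A}]^{1/2}\mathrm{S}_1[\phi]^{1/2}\mathrm{S}_2[\phi]^{1/2}$. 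Young's inequality absorbs the factor $\|\slashgrad\dot{A}_0\|_{L^2}$ on the left, and the smallness hypothesis $\mathrm{S}_1[\phi]<\epsilon$ together with $\mathrm{S}_1[\mathbf{A}]\leq\mathrm{S}_1[A]$ then gives
\[
\|\dot{A}_0\|^2_{H^1} \;\la\; \mathrm{S}_2[\phi]\,(1+\mathrm{S}_1[A]),
\]
which is stronger than (and hence implies) the stated inequality.

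The main obstacle I anticipate is spotting and carefully verifying the cancellation that collapses the equation for $\dot{A}_0$ to a plain Poisson equation; without it, one is left with an explicit $\operatorname{Im}(\bar{\phi}\ddot{\phi})$ on the right, which at this level of regularity would either force an unnatural estimate of $\ddot{\phi}$ or introduce extra powers of $\mathrm{S}_1[A]$ and $\mathrm{S}_2[\phi]$ that spoil the linearity in $\mathrm{S}_2[\phi]$. Once the Poisson equation is in hand, the remainder is a standard elliptic multiplier argument, with the smallness of $\mathrm{S}_1[\phi]$ used only to absorb a stray factor of $\mathrm{S}_2[\phi]^{1/2}$.
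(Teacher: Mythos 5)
Your proof is correct, and it takes a genuinely different---and in fact sharper---route than the paper's. The paper also differentiates the elliptic equation in $\tau$, but it keeps $\operatorname{Im}(\bar{\phi}\ddot{\phi})$ as a source, tests against $\dot{A}_0$, and then separately estimates $\| \ddot{\phi} \|_{L^2}$ from the wave equation; this is what produces the factor $(1+\mathrm{S}_1[A])^2$ and, crucially, a term $\| \dot{A}_0 \|^2_{H^1} \mathrm{S}_1[\phi]$ on the right-hand side that must be absorbed using the smallness hypothesis. You instead substitute the wave equation into the differentiated elliptic equation \emph{before} testing, and the cancellation you identify is genuine: the $i\dot{A}_0 \phi$ and $2iA_0\dot{\phi}$ terms of the wave equation contribute exactly $+\dot{A}_0|\phi|^2$ and $+2A_0\operatorname{Re}(\bar{\phi}\dot{\phi})$ to $-\operatorname{Im}(\bar{\phi}\ddot{\phi})$, cancelling the corresponding terms on the left. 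This is no accident---it is the identity $\slashed{\Delta}\dot{A}_0 = \dot{J}_0 = \slashgrad \cdot \mathbf{J}$ expressing conservation of the current $J_a = \operatorname{Im}(\bar{\phi}\D_a\phi)$ in Coulomb gauge. The resulting Poisson equation has a source that is only first order in $\phi$ after one integration by parts and contains no $A_0$ or $\dot{A}_0$ at all, so your multiplier argument closes with no absorption, yields the stronger bound $\| \dot{A}_0 \|^2_{H^1} \la \mathrm{S}_1[\phi]\, \mathrm{S}_2[\phi] (1 + \mathrm{S}_1[\mathbf{A}])$, and uses $\mathrm{S}_1[\phi] < \epsilon$ only to discard a bounded prefactor. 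The one trade-off is that the paper's explicit control of $\| \ddot{\phi} \|_{L^2}$ serves as the template for the inductive higher-order bound \eqref{generalA0dotbound}; with your approach the divergence identity would need to be re-derived after commuting $\slashgrad^m$ through the equations, though since the cancellation is gauge-theoretic rather than accidental it should persist at every order.
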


\begin{proof} First note that in the strong Coulomb gauge $\bar{A}_0(\tau) = 0$ for all $\tau$, and so $\dot{\bar{A}}_0(\tau) = 0$ for all $\tau$ as well. Thus $\| \dot{A}_0 \|_{L^2} \la \| \slashgrad \dot{A}_0 \|_{L^2}$, and we only need to estimate $\| \slashgrad \dot{A}_0 \|_{L^2}$. Differentiating \cref{ellipticequationCG} in $\tau$, we have
\[ - \slashed{\Delta} \dot{A}_0 + | \phi |^2 \dot{A}_0 = - \operatorname{Im}(\bar{\phi} \ddot{\phi}) - \bar{\phi} \dot{\phi} A_0 - \dot{\bar{\phi}} \phi A_0. \]
Multiplying through by $\dot{A}_0$ and integrating we have
\[ \| \slashgrad \dot{A}_0 \|^2_{L^2} + \| \phi \dot{A}_0 \|^2_{L^2} \leq \| \phi \dot{A}_0 \|_{L^2} \| \ddot{\phi} \|_{L^2} + 2 \| \phi \dot{A}_0 \|_{L^2} \| \dot{\phi} A_0 \|_{L^2} \]
which gives
\begin{equation} \label{dotA0estimate} \| \slashgrad \dot{A}_0 \|^2_{L^2} + \delta \| \phi \dot{A}_0 \|^2_{L^2} \la \| \ddot{\phi} \|^2_{L^2} + \| \dot{\phi} A_0 \|^2_{L^2}
\end{equation}
for some $0<\delta<1$. We thus need to estimate $\| \ddot{\phi} \|_{L^2}$, for which we shall use the field equation for $\phi$,
\[ \Box \phi + 2 i A_0 \dot{\phi} - 2 i \mathbf{A} \cdot \slashgrad \phi + ( 1 - A_0^2 + |\mathbf{A} |^2 + i \dot{A}_0 ) \phi = 0. \]
We estimate
\begin{equation} \label{ddotphibound} | \ddot{\phi} |^2 \la | \slashed{\Delta} \phi |^2 + | A_0 \dot{\phi} |^2 + | \mathbf{A} \slashgrad \phi |^2 + |\phi|^2 + | A_0^2 \phi |^2 + | \mathbf{A}^{2} \phi |^2 + | \dot{A}_0 \phi |^2 .
\end{equation}
With the exception of the term $| \dot{A}_0 \phi |^2$, the right-hand side of \cref{ddotphibound} will be easily controlled as we will see shortly. To deal with the problematic term we will use smallness of the data. Integrating \cref{ddotphibound} over the $3$-sphere we have
\begin{align*} \| \ddot{\phi} \|^2_{L^2} & \la \| \slashed{\Delta} \phi \|^2_{L^2} + \| A_0 \dot{\phi} \|^2_{L^2} + \| \mathbf{A} \slashgrad \phi \|^2_{L^2} + \| \phi \|^2_{L^2} + \| A_0^2 \phi \|^2_{L^2} + \| \mathbf{A}^{2} \phi \|^2_{L^2} + \| \dot{A}_0 \phi \|^2_{L^2} \\ 
& \la \| \phi \|^2_{H^2} + \| A_0 \|^2_{L^3} \| \dot{\phi}\|^2_{L^6} + \| \mathbf{A} \|^2_{L^3} \| \slashgrad \phi \|^2_{L^6} \\
& + \| A_0 \|_{L^6}^4 \| \phi \|^2_{L^6} + \| \mathbf{A} \|^4_{L^6} \| \phi \|^2_{L^6} + \| \dot{A}_0 \|^2_{L^3} \| \phi \|^2_{L^6} \\ 
& \la \| \phi \|^2_{H^2} + \| A_0 \|^2_{H^1} \| \dot{\phi} \|^2_{H^1} + \| \mathbf{A} \|^2_{H^1} \| \phi \|^2_{H^2} \\
& + \| A_0 \|^4_{H^1} \| \phi \|^2_{H^1} + \| \mathbf{A} \|^4_{H^1} \| \phi \|^2_{H^1} + \| \dot{A}_0 \|^2_{H^1} \| \phi \|^2_{H^1} \\ 
& \la \mathrm{S}_2[\phi] + \mathrm{S}_1[A] \mathrm{S}_2[\phi] + \mathrm{S}_1[\mathbf{A}] \mathrm{S}_2[\phi] + \mathrm{S}_1[A]^2 \mathrm{S}_1[\phi] + \mathrm{S}_1[\mathbf{A}]^2 \mathrm{S}_1[\phi] + \| \dot{A}_0 \|^2_{H^1} \mathrm{S}_1[\phi] \\ 
& \la \mathrm{S}_2[\phi] ( 1 + \mathrm{S}_1[A] )^2 + \| \dot{A}_0 \|^2_{H^1} \mathrm{S}_1[\phi].
\end{align*}
Putting this into \cref{dotA0estimate} gives
\[ \| \slashgrad \dot{A}_0 \|^2_{L^2} \la \mathrm{S}_2[\phi] ( 1 + \mathrm{S}_1[A] )^2 +  \| \dot{A}_0 \|^2_{H^1} \mathrm{S}_1[\phi],  \]
so provided $\mathrm{S}_1[\phi]$ is sufficiently small the Poincar\'e inequality gives
\[ \| \slashgrad \dot{A}_0 \|^2_{L^2} \la \mathrm{S}_2[\phi] ( 1+ \mathrm{S}_1[A])^2. \]
\end{proof}

\subsubsection{Estimate Algebra}

For ease of presentation we outline a schematic procedure to track how we bound the various terms arising in our $H^2$ estimates. The idea is simply to track the number of derivatives and their Sobolev exponents of the error terms and check that they do not exceed certain critical values. Let $f$ denote either $A$ or $\phi$, let $\partial$ denote either the $\mathbb{S}^3$-derivatives $\slashgrad$ or the $\tau$-derivative $\partial_\tau$, and let $\partial^2$ denote either $\slashgrad^2$ or $\partial_\tau \slashgrad$ (that is, not $\partial_\tau^2$). Then all the error terms that we encounter will in fact be of the form 
\[ \| | \partial^2 f|^m |\partial f|^k |f|^l \|_{L^1(\mathbb{S}^3)}, \]
where $m$, $k$, and $l$ are non-negative integers and in particular $m=0$, $1$, or $2$.

If $m=0$, we have
\[ \| |\partial f |^k |f|^l \|_{L^1} \leq \| f \|_{L^\infty}^l \| \partial f \|^k_{L^k}. \]
Now since $\mathbb{S}^3$ is compact, the Lebesgue spaces $L^p(\mathbb{S}^3)$ form a decreasing sequence in $p$, 
\[ L^{\infty}(\mathbb{S}^3) \hookrightarrow \dots \hookrightarrow L^p(\mathbb{S}^3) \hookrightarrow \dots \hookrightarrow L^q(\mathbb{S}^3) \hookrightarrow \dots \hookrightarrow L^1(\mathbb{S}^3), \]
$p > q$, where $\hookrightarrow$ denotes continuous inclusion. As $\mathbb{S}^3$ has dimension $3$, by Sobolev Embedding we also have
\[ H^1(\mathbb{S}^3) \hookrightarrow L^6(\mathbb{S}^3) \quad \text{ and } \quad H^2(\mathbb{S}^3) \hookrightarrow C^{0,\frac{1}{2}}(\mathbb{S}^3) \hookrightarrow L^\infty(\mathbb{S}^3), \]
so provided $k \leq 6$ we have
\[ \| |\partial f|^k | f |^l \|_{L^1} \la \| f \|^l_{2} \| f \|^k_{2} = \| f \|_{2}^{k+l}, \]
where
\[ \| f \|_2 \defeq \| f \|_{H^2(\mathbb{S}^3)} + \| \dot{f} \|_{H^1(\mathbb{S}^3)} \]
(notice that the norm $\| \cdot \|^2_2$ is the familiar Sobolev-type energy $\mathrm{S}_2$). 

If $m=1$, we perform the splitting
\begin{align*} \| | \partial^2 f | | \partial f |^k |f|^l \|_{L^1} & = \int |\partial^2 f | |\partial f|^k | f |^l \\
& \leq \int | \partial^2 f |^2 + \int |\partial f|^{2k} | f |^{2l} \leq \| f \|^2_{2} + \| | \partial f |^{2k} | f |^{2l} \|_{L^1}.	
\end{align*}

Now provided $ 2k \leq 6$, the second term in the above may be dealt with as in the case $m=0$, so we have
\[ \| | \partial^2 f | | \partial f |^k |f|^l \|_{L^1} \la \| f \|^2_{2} + \| f \|_{2}^{2(k+l)}. \]

Finally, when $m=2$ it will in fact turn out that $k$ is necessarily zero, so we will have
\[ \| | \partial^2 f |^2 |f|^l \|_{L^1} \leq \| f \|^l_{L^\infty} \| f \|^2_{2} \la \| f \|^{l+2}_{2}. \]
It will thus be sufficient to use the following prescription. For terms involving no $|\partial^2 f|$ (i.e. $m=0$) we shall check if $k \leq 6$, and if so, conclude that the term is bounded by $\|f\|_{2}^{k+l}$; for terms involving $| \partial^2 f |$ (i.e. $ m= 1$), we shall check if $k \leq 3$, and if so, conclude that the term is bounded by $\|f\|^2_{2} + \| f \|^{2(k+l)}_{2}$; finally, for terms with $m=2$ we shall check that $k=0$, and if so, conclude that these are bounded by $ \| f \|^{l+2}_{2}$. In the estimates that follow we will write down a term to be estimated,
\[ |\partial^2f|^m |\partial f|^k | f |^l, \]
and underneath note down its `signature' $(m,k,l)$, as in 
\[ \underset{\textcolor{blue}{(m,k,l)}}{|\partial^2f|^m |\partial f|^k | f |^l}. \]
If the criteria outlined above are met (that is, $k \leq 6$ for $m=0$, $k \leq 3$ for $m=1$, and $k=0$ for $m=2$), we shall tick the triplet,
\[ \underset{\textcolor{blue}{(m,k,l)\checkmark}}{|\partial^2f|^m |\partial f|^k | f |^l}. \]
Altogether this notation will thus mean that
\[ \| | \partial^2 f|^m |\partial f|^k |f|^l \|_{L^1(\mathbb{S}^3)} \la Q( \| f \|_2 ) \]
for some polynomial $Q$ with positive coefficients.

\subsubsection{\texorpdfstring{$H^2$}{H2} Error Terms} \label{sec:H2errorterms}

\allowdisplaybreaks

We now take $\alpha_a = \slashgrad_i A_a$ and $\psi = \slashgrad_i \phi$ in \cref{generalerrorterms} and estimate the second order error terms
\[ \mathsf{e}_2 \defeq \sum_i T^b \left( \nabla^a \mathbf{T}_{ab}[\slashgrad_i A] + \nabla^a \mathbf{T}_{ab}[\slashgrad_i \phi ] \right). \]
\Cref{generalerrorterms} gives
\begin{align*} \mathsf{e}_2 &= \sum_i - \mathfrak{M}(\slashgrad_i A)^\mu \left( \slashgrad_\mu \slashgrad_i A_0 - \slashgrad_i \dot{\mathbf{A}}_\mu \right) \\ &+ \sum_i \left( \frac{1}{2} \overline{\mathfrak{S}(\slashgrad_i \phi)} \D_0(\slashgrad_i \phi) + \frac{1}{2} \mathfrak{S}(\slashgrad_i \phi) \overline{\D_0(\slashgrad_i \phi)} - (\slashgrad_\mu A_0 - \dot{\mathbf{A}}_\mu ) \operatorname{Im}(\slashgrad_i \bar{\phi} \slashed{\D}^\mu \slashgrad_i \phi)  \right) \\ & \eqdef \mathsf{e}_2^1 + \mathsf{e}_2^2,
\end{align*}
and we consider $\mathsf{e}_2^1$ and $\mathsf{e}_2^2$ separately. We have
\small
\begin{align*} | \mathsf{e}_2^1 | & = \left| \sum_i - \mathfrak{M}(\slashgrad_i A)^\mu ( \slashgrad_\mu \slashgrad_i A_0 - \slashgrad_i \dot{\mathbf{A}}_\mu ) \right| \\ 
& \leq \sum_i \left| \left( \slashgrad_i \mathfrak{M}(A)^\mu - [\slashgrad_i, \, \mathfrak{M}](A)^\mu \right)\left( \slashgrad_\mu \slashgrad_i A_0 - \slashgrad_i \dot{\mathbf{A}}_\mu \right) \right| \\ 
& \la \left| \slashgrad (\bar{\phi} \slashed{\D} \phi) \right| \left[ | \slashgrad^2 A_0 | + |\slashgrad A_0 | + | \slashgrad \dot{\mathbf{A}} | \right] \\
& + \left[ | \slashgrad^2 \mathbf{A} | + | \slashgrad \mathbf{A} | + | \slashgrad \dot{A}_0 | \right] \left[ | \slashgrad^2 A_0 | + | \slashgrad A_0 | + | \slashgrad \dot{\mathbf{A}} | \right] \\ 
& \la \left[ | \slashgrad \phi |^2 + |\slashgrad \phi ||\phi| | \mathbf{A} | + |\slashgrad^2 \phi ||\phi| + |\slashgrad \mathbf{A} | |\phi|^2 + |\slashgrad \phi ||\phi | |\mathbf{A} | \right] \left[ | \slashgrad^2 A_0 | + | \slashgrad A_0 | + | \slashgrad \dot{\mathbf{A}} | \right] \\ 
& + \left[ | \slashgrad^2 \mathbf{A} | + | \slashgrad \mathbf{A} | + | \slashgrad \dot{A}_0 | \right] \left[ | \slashgrad^2 A_0 | + | \slashgrad A_0 | + | \slashgrad \dot{\mathbf{A}} | \right] \\ 
& \la \underset{\textcolor{blue}{(1,2,0) \checkmark }}{| \slashgrad^2 A_0| | \slashgrad \phi |^2} + \underset{\textcolor{blue}{(1,1,2)\checkmark }}{|\slashgrad^2 A_0 | | \slashgrad \phi | | \phi | | \mathbf{A} |} + \underset{\textcolor{blue}{(2,0,1)\checkmark }}{ |\slashgrad^2 A_0| | \slashgrad^2 \phi | | \phi | } + \underset{\textcolor{blue}{(1,1,2)\checkmark }}{ | \slashgrad^2 A_0 | | \slashgrad \mathbf{A} |  | \phi |^2} \\
& + \underset{\textcolor{blue}{(1,1,2)\checkmark }}{ |\slashgrad^2 A_0 | | \slashgrad \phi | | \phi | | \mathbf{A} | } + \underset{\textcolor{blue}{(0,3,0)\checkmark }}{| \slashgrad A_0 | | \slashgrad \phi |^2} + \underset{\textcolor{blue}{(0,2,2)\checkmark }}{| \slashgrad A_0 | | \slashgrad \phi | | \phi | |\mathbf{A} |} + \underset{\textcolor{blue}{(1,1,1)\checkmark }}{ |\slashgrad^2 \phi ||\slashgrad A_0| |\phi |} \\
& + \underset{\textcolor{blue}{(0,2,2)\checkmark }}{ |\slashgrad A_0 | |\slashgrad \mathbf{A} | |\phi |^2} + \underset{\textcolor{blue}{(0,2,2)\checkmark }}{|\slashgrad A_0 | |\slashgrad \phi| |\phi | |\mathbf{A} |}  + \underset{\textcolor{blue}{(1,2,0) \checkmark }}{|\slashgrad \dot{\mathbf{A}} | | \slashgrad \phi |^2} + \underset{\textcolor{blue}{(1,1,2) \checkmark }}{|\slashgrad \dot{\mathbf{A}} ||\slashgrad \phi | | \phi | | \mathbf{A} |} \\
& + \underset{\textcolor{blue}{(2,0,1)\checkmark }}{|\slashgrad \dot{\mathbf{A}} | | \slashgrad^2 \phi | | \phi|} + \underset{\textcolor{blue}{(1,1,2) \checkmark }}{ | \slashgrad \dot{\mathbf{A}}| | \slashgrad \mathbf{A}| | \phi |^2} + \underset{\textcolor{blue}{(1,1,2)\checkmark }}{|\slashgrad \dot{\mathbf{A}}| | \slashgrad \phi | |\phi | | \mathbf{A} |} +  \underset{\textcolor{blue}{(2,0,0)\checkmark }}{|\slashgrad^2 A_0 | |\slashgrad^2 \mathbf{A}|} + \underset{\textcolor{blue}{(1,1,0)\checkmark }}{|\slashgrad^2 A_0 | | \slashgrad \mathbf{A} |} \\
& + \underset{\textcolor{blue}{(2,0,0)\checkmark }}{|\slashgrad^2 A_0| | \slashgrad \dot{A}_0|} + \underset{\textcolor{blue}{(1,1,0)\checkmark }}{|\slashgrad^2 \mathbf{A} | | \slashgrad A_0 |} + \underset{\textcolor{blue}{(0,2,0)\checkmark }}{|\slashgrad A_0 | | \slashgrad \mathbf{A}|} + \underset{\textcolor{blue}{(1,1,0) \checkmark }}{ |\slashgrad \dot{A}_0 | | \slashgrad A_0 |} + \underset{\textcolor{blue}{(2,0,0) \checkmark }}{|\slashgrad^2 \mathbf{A} | |\slashgrad \dot{\mathbf{A}}|} \\
& + \underset{\textcolor{blue}{(1,1,0) \checkmark }}{|\slashgrad \dot{\mathbf{A}}| | \slashgrad \mathbf{A} |} + \underset{\textcolor{blue}{(2,0,0) \checkmark }}{|\slashgrad \dot{A}_0 | | \slashgrad \dot{\mathbf{A}} |}
\end{align*}
\normalsize
and
\small
\begin{align*} | \mathsf{e}_2^2 | & = \left| \sum_i \left( \frac{1}{2} \overline{\mathfrak{S}(\slashgrad_i \phi)} \D_0(\slashgrad_i \phi) + \frac{1}{2} \mathfrak{S}(\slashgrad_i \phi) \overline{\D_0(\slashgrad_i \phi)} - (\slashgrad_\mu A_0 - \dot{\mathbf{A}}_\mu) \operatorname{Im} ( \slashgrad_i \phi \slashed{\D}^\mu \slashgrad_i \phi) \right) \right| \\ 
&\leq \sum_i \left[ | \mathfrak{S}(\slashgrad_i \phi) | | \D_0 (\slashgrad_i \phi) | + | \slashgrad A_0 - \dot{\mathbf{A}} | |\slashgrad_i \phi | | \slashed{\D} \slashgrad_i \phi | \right] \\
& \la \sum_i | [ \slashgrad_i, \, \mathfrak{S}](\phi) | \left[ |\slashgrad_i \dot{\phi}| + | A_0 \slashgrad_i \phi | \right] + \left[ | \slashgrad A_0 | + | \dot{\mathbf{A}} | \right] | \slashgrad \phi | \left[ | \slashgrad^2 \phi | + | \mathbf{A} \slashgrad \phi | + | \slashgrad \phi | \right] \\ 
& \la \Big[ | \dot{\phi} \slashgrad A_0 | + | \phi \slashgrad \dot{A}_0 | + | \phi A_0 \slashgrad A_0 | + | \slashgrad^2 \phi | + | \phi \slashgrad^2 \mathbf{A} | + | \slashgrad A_0 | | \slashgrad \phi |^2 + | \dot{\mathbf{A}} | | \slashgrad \phi |^2 \\
&+ | \mathbf{A} \slashgrad \phi | + | \slashgrad \phi | + | \phi \slashgrad \mathbf{A} | + | \mathbf{A} \phi | + | \slashgrad \phi \slashgrad \mathbf{A} | + | \phi \mathbf{A} \slashgrad \mathbf{A} |  \Big] \left[ | \slashgrad \dot{\phi} | + | A_0 \slashgrad \phi | \right] \\
& + |\slashgrad^2 \phi | | \slashgrad A_0 | |\slashgrad \phi | + | \slashgrad A_0 | | \slashgrad \phi |^2 |\mathbf{A} | + |\slashgrad^2 \phi| |\slashgrad \phi| |\dot{\mathbf{A}} | + |\slashgrad \phi |^2 | \dot{\mathbf{A}} | | \mathbf{A} |  + |\slashgrad A_0 | |\slashgrad \phi |^2 \\
& + |\dot{\mathbf{A}} | | \slashgrad \phi |^2 \\
& \la \underset{\textcolor{blue}{(1,2,0)\checkmark }}{|\slashgrad \dot{\phi}| | \slashgrad A_0 | |\dot{\phi} |} + \underset{\textcolor{blue}{(2,0,1) \checkmark }}{|\slashgrad \dot{A}_0| | \slashgrad \dot{\phi} | | \phi |} + \underset{\textcolor{blue}{(1,1,2) \checkmark }}{|\slashgrad \dot{\phi}| | \slashgrad A_0 | | \phi | |A_0|} + \underset{\textcolor{blue}{(2,0,0) \checkmark }}{|\slashgrad^2 \phi | | \slashgrad \dot{\phi}|} + \underset{\textcolor{blue}{(2,0,1) \checkmark }}{|\slashgrad^2 \mathbf{A}| | \slashgrad \dot{\phi} | | \phi |} \\
& + \underset{\textcolor{blue}{(1,1,1) \checkmark }}{|\slashgrad \dot{\phi}| | \slashgrad \phi | |\mathbf{A} |} + \underset{\textcolor{blue}{(1,1,0) \checkmark }}{|\slashgrad \dot{\phi} | | \slashgrad \phi |} + \underset{\textcolor{blue}{(1,1,1) \checkmark }}{ | \slashgrad \dot{\phi}| | \slashgrad \mathbf{A}| | \phi | } + \underset{\textcolor{blue}{(1,0,2)\checkmark }}{|\slashgrad \dot{\phi}| | \mathbf{A}| | \phi |} + \underset{\textcolor{blue}{(1,2,0) \checkmark }}{|\slashgrad \dot{\phi}||\slashgrad \phi | |\slashgrad \mathbf{A} |} \\
& + \underset{\textcolor{blue}{(1,1,2)\checkmark }}{| \slashgrad \dot{\phi}| | \slashgrad \mathbf{A} | | \mathbf{A} | | \phi |} + \underset{\textcolor{blue}{(0,3,1) \checkmark }}{|\slashgrad \phi | |\dot{\phi} | | \slashgrad A_0 | |A_0 |} + \underset{\textcolor{blue}{(1,1,2)\checkmark }}{|\slashgrad \dot{A}_0| | \slashgrad \phi| |\phi | |A_0|} + \underset{\textcolor{blue}{(0,2,3) \checkmark }}{|\slashgrad A_0| |\slashgrad \phi | | \phi | |A_0|^2} \\
& + \underset{\textcolor{blue}{(1,1,1)\checkmark }}{|\slashgrad^2 \phi ||\slashgrad \phi| |A_0|} + \underset{\textcolor{blue}{(1,1,2) \checkmark }}{|\slashgrad^2 \mathbf{A}||\slashgrad \phi| |\phi| |A_0|} + \underset{\textcolor{blue}{(0,2,2) \checkmark }}{ |\slashgrad \phi|^2 | A_0 ||\mathbf{A} |} + \underset{\textcolor{blue}{(0,2,1) \checkmark }}{|\slashgrad \phi |^2 |A_0|} + \underset{\textcolor{blue}{(0,2,2)\checkmark }}{|\slashgrad \phi | | \slashgrad \mathbf{A} | | \phi | |A_0|} \\
& + \underset{\textcolor{blue}{(0,1,3) \checkmark }}{|\slashgrad \phi | |\phi | |A_0 | |\mathbf{A}|} + \underset{\textcolor{blue}{(0,3,1)\checkmark }}{|\slashgrad \phi|^2 | \slashgrad \mathbf{A} | |A_0|}  + \underset{\textcolor{blue}{(0,2,3)\checkmark }}{|\slashgrad \phi | |\slashgrad \mathbf{A} | |\phi | |A_0| |\mathbf{A} |} + \underset{\textcolor{blue}{(1,2,0)\checkmark }}{|\slashgrad^2 \phi| |\slashgrad \phi | |\slashgrad A_0|} \\
& + \underset{\textcolor{blue}{(0,3,1)\checkmark }}{|\slashgrad \phi|^2 |\slashgrad A_0| |\mathbf{A}|} + \underset{\textcolor{blue}{(1,2,0)\checkmark }}{|\slashgrad^2 \phi | |\slashgrad \phi| |\dot{\mathbf{A}}|} + \underset{\textcolor{blue}{(0,3,1)\checkmark }}{|\slashgrad \phi|^2 |\dot{\mathbf{A}}| |\mathbf{A} |} + \underset{\textcolor{blue}{(0,3,0) \checkmark }}{| \slashgrad \phi |^2 | \slashgrad A_0 |} + \underset{\textcolor{blue}{(0,3,0) \checkmark }}{ | \slashgrad \phi |^2 |\dot{\mathbf{A}}|}.
\end{align*}
\normalsize
Altogether this says that
\[ \|\mathsf{e}_2\|_{L^1(\mathbb{S}^3)} \la Q^{\text{IV}}\left(\|(\phi, \mathbf{A}, A_0) \|_2 \right) \]
for some polynomial $Q^{\text{IV}}$ with positive coefficients. An inspection of the triplets $(m,k,l)$ above shows that each error term contains at least one full power of $\| f \|_2^2$, so in fact
\begin{align*} \| \mathsf{e}_2 \|_{L^1} & \la \| (\phi, \mathbf{A}, A_0 ) \|_2^2 Q^{\text{III}}\left(\| (\phi, \mathbf{A}, A_0 \right) \|_2) \\
& \la \left( \mathrm{S}_2[\phi, A] + \| \dot{A}_0 \|^2_{H^1} \right) \left( Q^{\text{II}}\left(\mathrm{S}_2[\phi,A]^{1/2} \right) + Q^{\text{I}}\left(\| \dot{A}_0 \|_{H^1}\right) \right)
\end{align*}
for polynomials $Q^{\text{I,II,III}}$. Now by \Cref{prop:A0dotH1bound}, $\| \dot{A}_0 \|^2_{H^1} \la \mathrm{S}_2[\phi] (1 + \mathrm{S}_1[A])^2$. At this point we can either assume the first order estimates (\Cref{thm:H1estimates}), or bound $\| \dot{A}_0 \|^2_{H^1}$ by a polynomial in $\mathrm{S}_2[\phi,A]$ of degree higher than one; both methods are fine, but we will need to assume the first order estimates to close the second order ones anyway, so assuming $\mathrm{S}_1[\phi,A] \la 1$ we have $\| \dot{A}_0 \|^2_{H^1} \la \mathrm{S}_2[\phi,A]$. Hence for any fixed $\tau$
\begin{equation} \label{e2L1estimate} \| \mathsf{e}_2 \|_{L^1}(\tau) \la \mathrm{S}_2[\phi,A](\tau) P \left( \mathrm{S}_2[\phi, A](\tau)^{1/2} \right) \end{equation}
for some polynomial $P$.

\begin{theorem} \label{thm:H2estimates} Let $I$ be a fixed compact interval in $\mathbb{R}$ containing zero. There exists $\epsilon > 0$ such that if $\mathrm{S}_2[\phi, \mathbf{A}](0) \leq \epsilon$, then
\[ \mathrm{S}_2[\phi, A](\tau) \simeq \mathrm{S}_2[\phi, A](0) \]
for all $\tau \in I$.
\end{theorem}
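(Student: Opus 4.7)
The strategy is to run a nonlinear Grönwall bootstrap on the total second order geometric energy, built from the first order error term already controlled by conservation together with the higher order error terms estimated in \Cref{sec:H2errorterms}. Throughout, smallness of the first order Sobolev norm serves as the coupling that allows us to pass freely between geometric and Sobolev quantities.

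First I would fix the compact interval $I$ and use the smallness assumption $\mathrm{S}_2[\phi,\mathbf{A}](0)\leq\epsilon$, which of course implies $\mathrm{S}_1[\phi,\mathbf{A}](0)\leq\epsilon$. By \Cref{thm:H1estimates}, choosing $\epsilon$ small enough, $\mathrm{S}_1[\phi,A](\tau)\simeq\mathrm{S}_1[\phi,A](0)\la\epsilon$ uniformly for $\tau\in I$. This uniform first order smallness is what allows us to invoke \cref{Sobolev2equivalenceA} and \cref{Sobolev2equivalencephi} at every later slice, guaranteeing the equivalence
\[ \mathcal{E}^{(2)}_\tau \defeq \mathcal{E}_\tau[\phi,A]+\sum_i \bigl(\mathcal{E}_\tau[\slashgrad_i\phi]+\mathcal{E}_\tau[\slashgrad_i A]\bigr) \;\simeq\; \mathrm{S}_2[\phi,A](\tau) \]
for all $\tau\in I$ (this equivalence of course also holds at $\tau=0$ by the same argument).

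Next I would integrate the divergence identity for the second order current. Since $T$ is Killing on $\mathfrak{E}$ and the total first order energy is exactly conserved by \cref{conservationofenergy}, Stokes' theorem on the slab $\mathbb{S}^3\times[0,\tau]$ gives
\[ \mathcal{E}^{(2)}_\tau - \mathcal{E}^{(2)}_0 = \int_0^\tau \int_{\mathbb{S}^3} \mathsf{e}_2 \, \dvol_{\mathfrak{s}_3}\, \d\sigma. \]
Applying the bound \cref{e2L1estimate} on $\|\mathsf{e}_2\|_{L^1}$ derived in \Cref{sec:H2errorterms} and the Sobolev/geometric equivalence just established, this yields
\[ \mathrm{S}_2[\phi,A](\tau) \leq C \, \mathrm{S}_2[\phi,A](0) + C\int_0^\tau \mathrm{S}_2[\phi,A](\sigma) \, P\!\left(\mathrm{S}_2[\phi,A](\sigma)^{1/2}\right)\, \d\sigma \]
for some polynomial $P$ with non-negative coefficients (the implicit use of \Cref{prop:A0dotH1bound} to absorb $\|\dot A_0\|_{H^1}^2$ into $\mathrm{S}_2[\phi,A]$ requires precisely the first order smallness that is already in force).

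Finally I would feed this integral inequality into \Cref{lem:nonlinearGronwall} (with the interval adjusted to $I$ as noted in the remark following the lemma), taking $f(\tau)=\mathrm{S}_2[\phi,A](\tau)$. Provided $\epsilon$ is chosen small enough that the hypothesis $f(0)\la\epsilon$ of the lemma is satisfied, we obtain $\mathrm{S}_2[\phi,A](\tau)\leq C' \mathrm{S}_2[\phi,A](0)$ for all $\tau\in I\cap[0,\infty)$. The reverse direction $\mathrm{S}_2[\phi,A](0)\la\mathrm{S}_2[\phi,A](\tau)$, and the analogous bound for $\tau<0$, follow by exactly the same argument run from $\tau$ back to $0$ (resp. in backward time), using the time-reversal symmetry of the Killing vector $T$ and the field equations. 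The main obstacle is conceptual rather than technical: it is ensuring that the bootstrap closes, i.e. that the Sobolev--geometric equivalence and the elliptic control of $\dot A_0$ remain available uniformly over $I$ without any circular dependence on $\mathrm{S}_2$ smallness; this is precisely what the a priori first order estimate \Cref{thm:H1estimates} buys us, and is why the argument is presented only after the $H^1$ theory has been established.
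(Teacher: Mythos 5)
Your proposal is correct and follows essentially the same route as the paper's own proof: propagate first order smallness via \Cref{thm:H1estimates}, use it to establish the equivalence of the second order geometric and Sobolev energies via \cref{Sobolev2equivalenceA} and \cref{Sobolev2equivalencephi}, integrate the divergence identity (adding the exactly conserved first order energy) to obtain the integral inequality from \cref{e2L1estimate}, and close with \Cref{lem:nonlinearGronwall}. The treatment of the reverse inequality and negative times also matches the paper.
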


\begin{proof} Integrating $\mathsf{e}_2$ over the region $\mathbb{S}^3 \times [0, \tau]$, $\tau >0$,
\begin{align} \begin{split} \label{2ndorderestimate} \int_{\mathbb{S}^3 \times [0,\tau]} \mathsf{e}_2 \dvol & = \int_0^\tau \int_{\mathbb{S}^3} \mathsf{e}_2(\sigma) \dvol_{\mathfrak{s}_3} \d \sigma \\
& = \sum_i \left( \mathcal{E}_\tau[\slashgrad_i \phi] + \mathcal{E}_\tau[\slashgrad_i A] \right) - \sum_i \left( \mathcal{E}_0[\slashgrad_i \phi] + \mathcal{E}_0[\slashgrad_i A] \right).
\end{split}	
\end{align}
From \Cref{thm:H1estimates} we know that $\mathrm{S}_1[\phi,A](\tau) \simeq \mathrm{S}_1[\phi,A](0)$, and also that $\mathcal{E}_\tau[A] \simeq \mathrm{S}_1[A](\tau)$ and $\mathcal{E}_\tau[\phi] \simeq \mathrm{S}_1[\phi](\tau)$ for all $\tau$. Furthermore, we have that $\mathrm{S}_1[A](\tau)$ is small, so by \cref{Sobolev2equivalencephi}
\[ \mathcal{E}_\tau[\phi] + \sum_i \mathcal{E}_\tau[\slashgrad_i \phi] \simeq \mathrm{S}_2[\phi](\tau). \]
By \cref{Sobolev2equivalenceA},
\[ \mathcal{E}_\tau[A] + \sum_i \mathcal{E}_\tau[\slashgrad_i A] \simeq \mathrm{S}_2[A](\tau), \]
so adding $\mathcal{E}_\tau[\phi,A] = \mathcal{E}_0[\phi,A]$ to both sides of \cref{2ndorderestimate} we have
\begin{align*} \mathcal{E}_\tau[\phi,A] + \sum_i \left( \mathcal{E}_\tau [\slashgrad_i \phi] + \mathcal{E}_\tau[\slashgrad_i A] \right) = \mathcal{E}_0[\phi,A] & + \sum_i \left( \mathcal{E}_0 [ \slashgrad_i \phi ] + \mathcal{E}_0 [ \slashgrad_i A] \right) \\
& + \int_0^\tau \int_{\mathbb{S}^3} \mathsf{e}_2(\sigma) \dvol_{\mathfrak{s}_3} \d \sigma,	
\end{align*}
or equivalently
\begin{equation} \label{Sobolev2error2} \mathrm{S}_2[\phi,A](\tau) \simeq \mathrm{S}_2[\phi, A](0) + \int_0^\tau \int_{\mathbb{S}^3} \mathsf{e}_2(\sigma) \dvol_{\mathfrak{s}_3} \d \sigma. \end{equation}
Now \cref{e2L1estimate} gives
\begin{align*} \mathrm{S}_2[\phi,A](\tau) & \la \mathrm{S}_2[\phi,A](0) + \int_0^\tau \| \mathsf{e}_2 \|_{L^1(\mathbb{S}^3)}(\sigma) \, \d \sigma \\ 
& \la \mathrm{S}_2[\phi,A](0) + \int_0^\tau \mathrm{S}_2[\phi,A](\sigma) P \left( \mathrm{S}_2[\phi, A](\sigma)^{1/2} \right) \, \d \sigma.
\end{align*}
By \Cref{lem:nonlinearGronwall},
\[ \mathrm{S}_2[\phi,A](\tau) \la \mathrm{S}_2[\phi,A](0) \]
for $\tau \in I$. \Cref{Sobolev2error2} similarly shows that $\mathrm{S}_2[\phi,A](0) \la \mathrm{S}_2[\phi, A](\tau)$, and so
\[ \mathrm{S}_2[\phi,A](\tau) \simeq \mathrm{S}_2[\phi,A](0). \]
for all $\tau \in I$. In particular, picking $I$ large enough to contain $[-\pi/2, \pi/2]$ shows
\[ \mathrm{S}_2[\phi,A](\scri^-) \simeq \mathrm{S}_2[\phi,A](\scri^+). \]
\end{proof}

\section{Higher Order Estimates} \label{sec:higherorderestimates}

From here it is straightforward to play the same game for higher order estimates. It is clear that if for a given $\tau$ and $m\geq 1$ the $(m+1)$-th Sobolev energy $\mathrm{S}_{m+1}[\phi,A](\tau)$ is small enough, then
\[ \sum_{k=0}^{m} \mathcal{E}_\tau[\slashgrad^k \phi] \simeq \mathrm{S}_{m+1}[\phi](\tau) \quad \text{and} \quad \sum_{k=0}^{m} \mathcal{E}_\tau[\slashgrad^k A] \simeq \mathrm{S}_{m+1}[A](\tau), \]
where as before $\mathcal{E}_\tau[\slashgrad^k \phi] = \sum_{i_1, \dots, i_k \in \{ 1,2,3 \}} \mathcal{E}_\tau [ \slashgrad_{i_1} \dots \slashgrad_{i_k} \phi]$, and similarly for $A_a$. We suppress sums over the basis vector fields $\{ X_i \}$ from now. It is clear that to prove that
\begin{equation} \label{higherestimates} \mathrm{S}_{m+1}[\phi,A](\tau) \simeq \mathrm{S}_{m+1}[\phi,A](0) \end{equation}
it is enough to prove the estimate 
\begin{equation} \label{generalerrorbound} \| \mathsf{e}_{m+1} \|_{L^1}(\tau) \la \mathrm{S}_{m+1}[\phi,A](\tau) P\left( \mathrm{S}_{m+1}[\phi,A](\tau)^{1/2} \right) \end{equation}
for a polynomial $P$, since then the proof of \cref{higherestimates} goes through exactly as in the proof of \Cref{thm:H2estimates}. Now because 
\[ H^{m+1}(\mathbb{S}^3) \hookrightarrow C^{m-1}(\mathbb{S}^3), \]
in our $(m+1)$-th order estimates we need only track derivatives of order $m$ and higher, since all the others will be $L^{\infty}$-controlled by $\mathrm{S}_{m+1}$. More precisely, since the $\mathrm{S}_{m+1}$ energies control the $L^\infty$ norms of $\slashgrad^{m-1} \phi$, $\slashgrad^{m-1} A$, $\slashgrad^{m-2} \dot{\phi}$ and $\slashgrad^{m-2} \dot{\mathbf{A}}$, we will only track terms of higher order than these (and also $\dot{A}_0$, which we will deal with separately as before). As before, one can write down the bounds for the commutators of $\slashgrad$ with the field equation operators $\mathfrak{M}$ and $\mathfrak{S}$, acting this time on a general $1$-form $\alpha$ and a general scalar field $\psi$,
\[ \left| [ \slashgrad, \, \mathfrak{M} ](\alpha) \right|_{\mathbb{S}^3} \la | \slashgrad^2 \boldsymbol{\alpha} | + |\slashgrad \dot{\alpha}_0 | + \text{l.o.t.s}, \]
and
\[ \left| [ \slashgrad, \, \mathfrak{S} ](\psi) \right| \la | \psi \slashgrad \dot{A}_0| + | \dot{\psi} \slashgrad A_0 | + | \slashgrad^2 \psi | + | \psi \slashgrad^2 \mathbf{A} | + \text{l.o.t.s}, \]
where the lower order terms are terms that are of order one or zero in derivatives of $\alpha$, $A$, or $\psi$. Now estimate the $(m+1)$-th error term:

\small
\begin{align*} \mathsf{e}_{m+1} & \defeq T^b \left( \nabla^a \mathbf{T}_{ab} [ \slashgrad^m A] + \nabla^a \mathbf{T}_{ab} [ \slashgrad^m \phi ] \right) \\
& = T^b \big( \mathfrak{M}(\slashgrad^m A)^a ( \nabla_a (\slashgrad^m A)_b - \nabla_b (\slashgrad^m A)_a ) + \operatorname{Re} \left( \overline{\mathfrak{S}(\slashgrad^m \phi )} \D_b (\slashgrad^m \phi) \right) \\
& + (\nabla_a A_b - \nabla_b A_a ) \operatorname{Im}\left(\slashgrad^m \bar{\phi} \D^a \slashgrad^m \phi \right) \big) \\
& \leq \left| \mathfrak{M}(\slashgrad^m A)^\mu (\slashgrad_\mu (\slashgrad^m A_0) - \slashgrad^m \dot{\mathbf{A}}_\mu ) \right| + \left| \operatorname{Re} \left( \mathfrak{S}(\slashgrad^m \phi) \D_0 (\slashgrad^m \phi ) \right) \right| \\
& + \left| (\slashgrad_\mu A_0 - \dot{\mathbf{A}}_\mu ) \operatorname{Im} \left( \slashgrad^m \bar{\phi} \slashed{\D}^\mu \slashgrad^m \phi \right) \right| \\
& \la \left| \mathfrak{M}(\slashgrad^m A) \right|_{\mathbb{S}^3} \left[ | \slashgrad^{m+1} A_0 | + | \slashgrad^m \dot{\mathbf{A}} | \right] + \left| \mathfrak{S} (\slashgrad^m \phi) \right| \left[ | \slashgrad^m \dot{\phi} | + | A_0 | | \slashgrad^m \phi | \right] \\ 
& + \left[ | \slashgrad A_0 | + | \dot{\mathbf{A}} | \right] \left[ |\slashgrad^m \phi | | \slashgrad^{m+1} \phi | + | \slashgrad^m \phi | | \mathbf{A} | |\slashgrad^m \phi | \right] +  \text{l.o.t.s} \\
& \la \left[ | \slashgrad^{m+1} A_0 | + | \slashgrad^m \dot{\mathbf{A}} | \right] \Big[ \left| \slashgrad^m \mathfrak{M}(A) \right|_{\mathbb{S}^3} + \left| [ \slashgrad^m, \, \mathfrak{M} ](A) \right|_{\mathbb{S}^3} \Big] + \Big[ \left| \slashgrad^m \mathfrak{S}(\phi) \right| \\
& + \left| [\slashgrad^m, \, \mathfrak{S}](\phi) \right| \Big] | \slashgrad^m \dot{\phi} | + | \slashgrad^m \phi | | \slashgrad^{m+1} \phi | \left[ |\slashgrad A_0 | + | \dot{\mathbf{A}} | \right] + \text{l.o.t.s} \\
& \la \left[ | \slashgrad^{m+1} A_0 | + | \slashgrad^m \dot{\mathbf{A}} | \right] \left[ | \slashgrad^m ( \phi \slashed{\D} \phi ) | + | \slashgrad^{m-1} [ \slashgrad, \, \mathfrak{M} ](A) | + | [ \slashgrad, \, \mathfrak{M} ]| ( \slashgrad^{m-1} A ) | \right] \\
& + |\slashgrad^m \dot{\phi} | \left[ | \slashgrad^{m-1} [ \slashgrad, \, \mathfrak{S}](\phi) | + | [ \slashgrad, \, \mathfrak{S} |( \slashgrad^{m-1} \phi) | \right] + | \slashgrad^m \phi | | \slashgrad^{m+1} \phi | \left[ | \slashgrad A_0 | + | \dot{\mathbf{A}} | \right] \\
& + \text{l.o.t.s} \\
& \la \left[ | \slashgrad^{m+1} A_0 | + | \slashgrad^m \dot{\mathbf{A}} | \right] \textcolor{red}{\bigg[} \left| \slashgrad^m ( \phi \slashgrad \phi + \mathbf{A} \phi^2 ) \right| + \left| \slashgrad^{m-1} ( \slashgrad^2 \mathbf{A} + \slashgrad \dot{A}_0 + \text{l.o.t.s} ) \right| \\ 
& + |\slashgrad^{m+1} \mathbf{A} | + |\slashgrad^m \dot{A}_0 | + \text{l.o.t.s} \textcolor{red}{\bigg]} \\
& + | \slashgrad^m \dot{\phi} | \textcolor{blue}{\bigg[} \left| \slashgrad^{m-1} ( \phi \slashgrad \dot{A}_0 + \dot{\phi} \slashgrad A_0 + \slashgrad^2 \phi + \phi \slashgrad^2 \mathbf{A} + \text{l.o.t.s} ) \right| \\
& + | \slashgrad^{m-1} \phi | | \slashgrad \dot{A}_0 | + | \slashgrad^{m-1} \dot{\phi} | |\slashgrad A_0 | + | \slashgrad^{m+1} \phi | + | \slashgrad^{m-1} \phi | | \slashgrad^2 \mathbf{A} | \textcolor{blue}{\bigg]} \\
& + | \slashgrad^m \phi | | \slashgrad^{m+1} \phi | \left[ | \slashgrad A_0 | + | \dot{\mathbf{A}} | \right] + \text{l.o.t.s} \\
& \la \left[ | \slashgrad^{m+1} A_0 | + |\slashgrad^m \dot{\mathbf{A}} | \right] \textcolor{red}{\Bigg[} \sum_{k=0}^m | \slashgrad^{m-k} \phi | | \slashgrad^{k+1} \phi | + | \slashgrad^m ( \mathbf{A} \phi^2 ) | + | \slashgrad^{m+1} \mathbf{A} | + | \slashgrad^m \dot{A}_0 | \textcolor{red}{\Bigg]} \\
& + | \slashgrad^m \dot{\phi} | \textcolor{blue}{\Bigg[} \sum_{k=0}^{m-1} | \slashgrad^{m-1-k} \phi | | \slashgrad^{k+1} \dot{A}_0 | + \sum_{k=0}^{m-1} | \slashgrad^{m-1-k} \dot{\phi} | | \slashgrad^{k+1} A_0 | + |\slashgrad^{m+1} \phi | \\
& + \sum_{k=0}^{m-1} | \slashgrad^{m-1-k} \phi | |\slashgrad^{k+2} \mathbf{A} | \textcolor{blue}{\Bigg]} + | \slashgrad^m \phi | | \slashgrad^{m+1} \phi | \left[ | \slashgrad A_0 | + | \dot{\mathbf{A}} | \right] + \text{l.o.t.s} \\
& \la_{\mathrm{S}_{m+1}^{1/2}} \left[ | \slashgrad^{m+1} A_0 | + | \slashgrad^m \dot{\mathbf{A}} | \right] \textcolor{red}{\bigg[} |\slashgrad^m \phi | | \slashgrad \phi | + |\phi | |\slashgrad^{m+1} \phi | + |\phi|^2 |\slashgrad^m \mathbf{A} | + |\phi | | \mathbf{A} | |\slashgrad^m \phi | \\
& + |\slashgrad^{m+1} \mathbf{A} | + |\slashgrad^m \dot{A}_0 | \textcolor{red}{\bigg]} + | \slashgrad^m \dot{\phi} | \textcolor{blue}{\bigg[} | \slashgrad^m \dot{A}_0 | + | \slashgrad^{m-1} \dot{\phi} | |\slashgrad A_0 | + |\dot{\phi} | |\slashgrad^m A_0 | \\
& + | \slashgrad^{m+1} \phi | + | \slashgrad^{m+1} \mathbf{A} | \textcolor{blue}{\bigg]} + | \slashgrad^m \phi | | \slashgrad^{m+1} \phi | + \text{l.o.t.s} \\
& \la_{\mathrm{S}_{m+1}^{1/2}} \left[ | \slashgrad^{m+1} A_0 | + | \slashgrad^m \dot{\mathbf{A}} | \right] \textcolor{red}{\bigg[} | \slashgrad^m \phi | + | \slashgrad^{m+1} \phi | + | \slashgrad^m \mathbf{A} | + | \slashgrad^{m+1} \mathbf{A} | + |\slashgrad^m \dot{A}_0 | \textcolor{red}{\bigg]} \\
& + |\slashgrad^m \dot{\phi} | \textcolor{blue}{\bigg[} | \slashgrad^m \dot{A}_0 | + | \slashgrad^{m-1} \dot{\phi} | + |\slashgrad^m A_0 | + | \slashgrad^{m+1} \phi | + | \slashgrad^{m+1} \mathbf{A} | \textcolor{blue}{\bigg]} + |\slashgrad^m \phi | | \slashgrad^{m+1} \phi | \\
& + \text{l.o.t.s} \\
& \la_{\mathrm{S}_{m+1}^{1/2}} \left[ | \slashgrad^{m+1} A_0 | + |\slashgrad^m \dot{\mathbf{A}} | \right] \textcolor{red}{\bigg[} | \slashgrad^{m+1} \phi | + | \slashgrad^{m+1} \mathbf{A} | + |\slashgrad^m \dot{A}_0 | \textcolor{red}{\bigg]} + | \slashgrad^m \dot{\phi} | \textcolor{blue}{\bigg[} | \slashgrad^m \dot{A}_0 | \\
& + | \slashgrad^{m-1} \dot{\phi} | + |\slashgrad^m A_0 | + |\slashgrad^{m+1} \phi | + |\slashgrad^{m+1} \mathbf{A} | \textcolor{blue}{\bigg]} + | \slashgrad^m \phi | | \slashgrad^{m+1} \phi | + \text{l.o.t.s} \\
& \la_{\mathrm{S}_{m+1}^{1/2}} | \slashgrad^{m+1} A_0 | | \slashgrad^{m+1} \phi | + | \slashgrad^{m+1} A_0 | | \slashgrad^{m+1} \mathbf{A} | + | \slashgrad^{m+1} A_0 | | \slashgrad^m \dot{A}_0 | \\
& + | \slashgrad^m \dot{\mathbf{A}} | | \slashgrad^{m+1} \phi | + | \slashgrad^m \dot{\mathbf{A}} | | \slashgrad^{m+1} \mathbf{A} | + | \slashgrad^m \dot{\mathbf{A}} | | \slashgrad^m \dot{A}_0 | + | \slashgrad^m \dot{\phi} | | \slashgrad^m \dot{A}_0 | \\
& + | \slashgrad^m \dot{\phi} | |\slashgrad^{m-1} \dot{\phi} | + | \slashgrad^m \dot{\phi} | | \slashgrad^m A_0 | + |\slashgrad^m \dot{\phi} | | \slashgrad^{m+1} \phi | + | \slashgrad^m \dot{\phi} | | \slashgrad^{m+1} \mathbf{A} | \\
& + | \slashgrad^m \phi | | \slashgrad^{m+1} \phi | + \text{l.o.t.s},
\end{align*}

\normalsize

\noindent where by $\la_{\mathrm{S}^{1/2}_{m+1}}$ we mean ``bounded up to a polynomial in $\mathrm{S}_{m+1}^{1/2}$". Note also that, like in the estimate of \Cref{sec:H2errorterms} where the triplets $(m,k,l)$ sum to at least two, the lower order terms in the above are at least quadratic in the fields so that one can control them by a full power of $\mathrm{S}_{m+1}$. Furthermore, inspecting the leading order terms in the above one sees that, with the exception of $\slashgrad^m \dot{A}_0$, they are all easily controlled by $\mathrm{S}_{m+1}$:
\begin{align*} \| \mathsf{e}_{m+1} \|_{L^1} & \la_{\mathrm{S}_{m+1}^{1/2}} \mathrm{S}_{m+1} + \| \slashgrad^{m+1} A_0 \slashgrad^m \dot{A}_0 \|_{L^1} + \| \slashgrad^m \dot{\mathbf{A}} \slashgrad^m \dot{A}_0 \|_{L^1} + \| \slashgrad^m \dot{\phi} \slashgrad^m \dot{A}_0 \|_{L^1} \\
& \la_{\mathrm{S}_{m+1}^{1/2}} \mathrm{S}_{m+1} + \| \dot{A}_0 \|^2_{H^m}.
\end{align*}
As in \Cref{prop:A0dotH1bound}, standard elliptic and wave equation estimates inductively show that for small $\mathrm{S}_{m}$,
\begin{equation} \label{generalA0dotbound} \| \dot{A}_0 \|^2_{H^m} \la_{\mathrm{S}_{m}^{1/2}} \mathrm{S}_{m+1},  \end{equation}
so altogether we have
\[ \| \mathsf{e}_{m+1} \|_{L^1} \la \mathrm{S}_{m+1} P( \mathrm{S}_{m+1}^{1/2} ) \]
for some polynomial $P$.

\section{Proof of \texorpdfstring{\Cref{thm:estimates}}{estimates}} \label{sec:proofofH2estimates}

The $m=1$ case is trivial, while for $m=2$ we have already proved the estimates $\mathrm{S}_m[\phi,A](\tau) \simeq \mathrm{S}_m[\phi, A](0)$ and $\| \dot{A}_0 \|^2_{H^{m-1}}(\tau) \la \mathrm{S}_m[\phi,A](\tau) $ for small initial data. We proceed by induction. Suppose the estimates
\[ \mathrm{S}_m[\phi, A](\tau) \simeq \mathrm{S}_m[\phi,A](0) \quad \text{and} \quad \| \dot{A}_0 \|^2_{H^{m-1}}(\tau) \la \mathrm{S}_m[\phi, A](\tau) \]
hold for some $m \in \mathbb{N}$ provided $\mathrm{S}_m[\phi,A](0)$ is small enough. The second of these is immediate for $m+1$ by \cref{generalA0dotbound}, which then implies \cref{generalerrorbound}. Arguing as in the proof of \Cref{thm:H2estimates} and applying \Cref{lem:nonlinearGronwall} then gives \cref{higherestimates}. \hspace{\fill} $\blacksquare$

\section{Proof of \texorpdfstring{\Cref{thm:scatteringtheory}}{scatteringtheory}} \label{sec:proofofhigherorderestimates}

We restrict ourselves to the case of $\scri^+$, the case of $\scri^-$ being analogous. Pick admissible initial data $u_0$ on $\Sigma$ such that $\mathrm{S}_m[\phi,\mathbf{A}](\Sigma)$ is small enough. Then $\mathrm{S}_m[\phi,A](\Sigma) < \epsilon_0$ for some small $\epsilon_0 >0$, and by \Cref{cor:wellposedness} there exists a solution $(\phi,A_a)$ in $E_m = \bigcap_{k=0}^m C^k_b(I;H^{m-k})$ to the system \cref{phiAequationsCGprojected} unique up to trivial gauge transformations such that $I$ contains $[-\pi/2,\pi/2]$. Since the solution $(\phi,A_a)$ is at least $C^1$ in $\tau$ for $m \geq 2$, $ u = (\phi, \mathbf{A}, \dot{\phi}, \dot{\mathbf{A}}, A_0)$ has a well-defined restriction to $\scri^+$. This defines the forward wave operator
\begin{align*} \mathfrak{T}^+_m : \mathrm{S}_{m,\epsilon_0}^0 &\longrightarrow \mathrm{S}_m^+, \\
u_0 & \longmapsto u^+ = (\phi, \mathbf{A}, \dot{\phi}, \dot{\mathbf{A}}, A_0)|_{\scri^+}.
\end{align*}
By \Cref{thm:estimates}, whenever $\epsilon_0$ is small enough we have the estimate
\begin{equation} \label{scriplusdataestimate} \mathrm{S}_m[\phi, A](\scri^+) \leq C \mathrm{S}_m[\phi,A](\Sigma) \leq C \epsilon_0 \eqdef \epsilon_1, \end{equation}
so the operator $\mathfrak{T}^+_m$ is bounded. The data $u^+$ on $\scri^+$ has size at most $\epsilon_1 = C \epsilon_0$, so reducing $\epsilon_0$ if necessary, we can evolve $u^+$ backwards in time to find data $\tilde{u}_0$ on $\Sigma$. But by uniqueness $u_0 = \tilde{u}_0$. Thus the map $\mathfrak{T}^+_m$ is injective for $\epsilon_0$ small enough.

Now restrict the co-domain of $\mathfrak{T}^+_m$ to its image:
\[ \mathfrak{T}^+_m : \mathrm{S}^0_{m, \epsilon_0} \longrightarrow \mathfrak{T}^+_m(\mathrm{S}^0_{m, \epsilon_0}) \eqdef \mathscr{D}^+_{m, \epsilon_1}. \]
By definition, $\mathfrak{T}^+_m$ is now surjective and so bijective, and from the estimate \eqref{scriplusdataestimate} it is clear that $\mathscr{D}^+_{m, \epsilon_1} \subset \mathrm{S}^+_{m,\epsilon_1}$. The operator $\mathfrak{T}^+_m$ is thus invertible and satisfies the bounds
\[ \| \mathfrak{T}^+_m u_0 \|^2_{\mathrm{S}_m} \la \|u_0 \|^2_{\mathrm{S}_m} \quad \text{and} \quad \| (\mathfrak{T}^+_m)^{-1} u^+ \|^2_{\mathrm{S}_m} \la \| u^+ \|^2_{\mathrm{S}_m} \]
for $u_0 \in \mathrm{S}^0_{m, \epsilon_0}$, $u^+ \in \mathscr{D}^+_{m, \epsilon_1}$. Furthermore, the set $\mathscr{D}^+_{m,\epsilon_1}$ contains a small ball around the origin in $\mathrm{S}^+_m$. Indeed, if $v^+ \in \mathrm{S}_m^+$ has small enough norm, say $\| v^+ \|^2_{\mathrm{S}_m} < \delta \ll \epsilon_0$, then $\| (\mathfrak{T}^+_m)^{-1} v^+ \|^2_{\mathrm{S}_m} \leq C \| v^+ \|^2_{\mathrm{S}_m} < C \delta < \epsilon_0$, and so $(\mathfrak{T}^+_m)^{-1} v^+ \in \mathrm{S}^0_{m, \epsilon_0}$.

\begin{figure}[h]
\centering
	\begin{tikzpicture}
	\centering
	\node[inner sep=0pt] (wavemaps) at (3.4,0)
    	{\includegraphics[width=.6\textwidth]{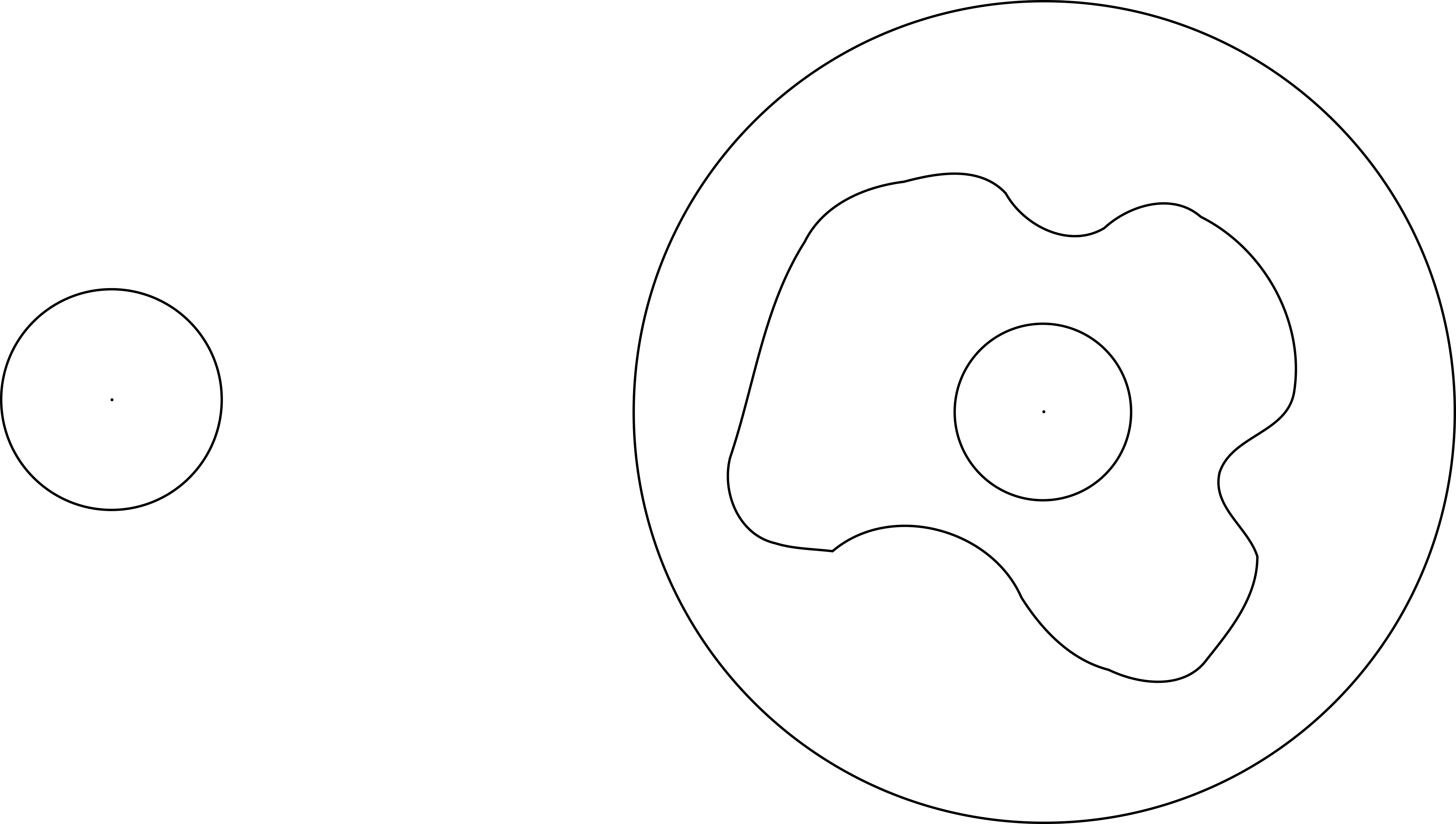}};

	\node[label={[shift={(-0.55,-0.45)}]$\mathrm{S}^0_{m,\epsilon_0}$}] {};
	\node[label={[shift={(1.8,1.7)}]$\mathfrak{T}^+_m$}] {};
	\node[label={[shift={(2.2,-2.5)}]$(\mathfrak{T}^+_m)^{-1}$}] {};
	\node[label={[shift={(4.7,-2.3)}]$\mathrm{S}^+_{m,\epsilon_1}$}] {};
	\node[label={[shift={(5.5,-0.5)}]$\mathrm{S}^+_{m,\delta}$}] {};
	\node[label={[shift={(4.2,-0.8)}]$\mathscr{D}^+_{m,\epsilon_1}$}] {};	
	\draw[->] (0,0.7) .. controls (1, 2) and (2,2) .. (3.7,1);
	\draw[<-] (-0.1,-0.6) .. controls (1.5, -2) and (2.7,-2) .. (4,-1);	
	
	\end{tikzpicture}
	\caption{The image of a small ball under the forward wave operator $\mathfrak{T}^+_m$.} \label{fig:wavemaps}	
\end{figure}

Constructing the scattering operator is now simply a matter of composing the inverse backward wave operator and the forward wave operator. We define
\begin{align*} & \mathscr{S}_m : \mathscr{D}^-_{m, \epsilon_1} \longrightarrow \mathscr{D}^+_{m, \epsilon_1}, \\
& \mathscr{S}_m \defeq \mathfrak{T}^+_m \circ ( \mathfrak{T}^-_m)^{-1}.
\end{align*}
Then $\mathscr{S}_m$ is invertible with inverse $\mathscr{S}_m^{-1} = \mathfrak{T}^-_m \circ ( \mathfrak{T}^+_m)^{-1}$, and the estimates
\[ \| u^+ \|^2_{\mathrm{S}_m} \simeq \| u^- \|^2_{\mathrm{S}_m} \]
for $u^\pm \in \mathscr{D}^\pm_{m,\epsilon_1}$ follow from the estimates for $\mathfrak{T}^\pm_m$. \hspace{\fill} $\blacksquare$

\begin{remark} 
It is not immediately clear what the set $\mathscr{D}^+_{m,\epsilon_1}$ looks like, for two reasons. Firstly, the sets $\mathrm{S}^{\pm,0}_m$ are not vector spaces since admissible initial data is not additive. Secondly, the fact that $\mathfrak{T}^+_m$ is a nonlinear operator precludes any straightforward application of the open mapping theorem, so it is not even obvious that $\mathscr{D}^+_{m,\epsilon_1}$ is open and connected. Nonetheless, by symmetry it is clear that the set of past asymptotic data $\mathscr{D}^-_{m, \epsilon_1}$ and the set of future asymptotic data $\mathscr{D}^+_{m, \epsilon_1}$ are of the same `size' in the sense that they are contained in balls of the same radius in $\mathrm{S}^-_m$ and $\mathrm{S}^+_m$ respectively.
\end{remark}

\begin{remark} The lack of vector space structure on the domains of definition of the operators $\mathfrak{T}_m^{\pm}$ and $\mathscr{S}_m$ makes it difficult to discuss their regularity beyond boundedness. This lack of vector space structure stems, most importantly, from the constraint equations in the system \eqref{phiAequationsCGprojected}. It is fairly easy to see that any extension of e.g. $\mathscr{S}_m$ off the constraint surface that preserves boundedness will automatically be continuous at the zero solution, but continuity at more general solutions will require a more careful analysis of \eqref{phiAequationsCGprojected} linearized around said solution, as well as a choice of extension. Differentiability will pose further complications.
\end{remark}

\section{Proof of \texorpdfstring{\Cref{thm:decayrates}}{decayrates}} \label{sec:proofofdecayrates}

Suppose $\mathrm{S}_m[\tilde{\phi}, \tilde{\mathbf{A}}](\eta = 0)$ is small. We derive the asymptotics for $\scri^+$, the ones for $\scri^-$ being analogous. By \cref{scalingofenergies}, $\mathrm{S}_m[\phi,\mathbf{A}](\tau=0)$ is small too, and $A_0$ estimates imply that the full $\mathrm{S}_m[\phi, A](\tau = 0)$ is small. Then according to our estimates and Sobolev embeddings, $\phi$, $\mathbf{A}$ and $A_0$ are continuous on all of $\widehat{\mathrm{dS}}_4$ with a $C^{m-2}$ trace on $\scri^+$.

Let $m=2$. Then $\phi = \Omega^{-1} \tilde{\phi}$ has a continuous limit on $\scri^+$, so 
\[ | \tilde{\phi} | \la \Omega \la \frac{1}{\cosh(H \eta)} \la \e^{-H \eta} \]
as $\eta \to + \infty$. The timelike component of $A_a$ is $A_0 = \partial_\tau^a A_a = H^{-1} \cosh(H \eta) \partial_\eta^a \tilde{A}_a = H^{-1} \cosh(H \eta) \tilde{A}_\eta$
and has a continuous limit on $\scri^+$, so similarly
\[ | \tilde{A}_\eta | \la \e^{-H \eta} \]
as $\eta \to + \infty $. Finally the $\mathbb{S}^3$ components of $A$ are
\[ |\mathbf{A}|^2_{\mathfrak{s}_3} = - \mathfrak{e}^{ab} \mathbf{A}_a \mathbf{A}_b = - \Omega^{-2} \tilde{g}^{ab} \tilde{\mathbf{A}}_a \tilde{\mathbf{A}}_b = \Omega^{-2} \frac{H^2}{\cosh^2(H \eta)} \mathfrak{s}_3^{\mu \nu} \tilde{\mathbf{A}}_\mu \tilde{\mathbf{A}}_\nu = | \tilde{\mathbf{A}} |^2_{\mathfrak{s}_3}, \]
so $ | \tilde{\mathbf{A}} |_{\mathfrak{s}_3} \la 1$.

Next work in the static coordinates \eqref{physicaldSmetricstatic}. These coordinates are only appropriate in region I of \Cref{fig:desitterpenrose} since they become singular on the horizons $r=1/H$, and $\partial_t$ is spacelike in regions II and IV and past-pointing in region III. Following the flow of the vector field $\partial_t$ in region I, one is forced to the top right corner of \Cref{fig:desitterpenrosestatic} as $t \to +\infty$. A preferred point on $\scri^+$ has therefore been singled out for an observer following the flow of $\partial_t$; this point is the timelike infinity for observers living in region $\mathrm{I}$ of \Cref{fig:desitterpenrose}.

In these coordinates the conformal factor $\Omega$ is given by
\[ \Omega = \frac{H}{\cosh(Ht)} \frac{1}{\sqrt{F_t(r)}}, \]
where $F_t(r) = 1 - \tanh^2(Ht) H^2 r^2$. Keeping $r$ fixed, for the scalar field we then have
\[ | \tilde{\phi} | \la \Omega \la_r \e^{-Ht} \]
as $t \to + \infty$. For the Maxwell potential we find the relations
\begin{align*} & \tilde{A}_t = H^2  \sech^2(Ht)	 F_t(r)^{-1} \left( -r F(r)^{1/2} \sinh(Ht) A_\zeta + H^{-1} F(r)^{1/2}  \cosh(Ht) A_\tau \right), \\
& \tilde{A}_r = H^2 \sech^2(Ht) F_t(r)^{-1} \left( H^{-1} F(r)^{-1/2} \cosh(Ht) A_\zeta - r F(r)^{-1/2} \sinh(Ht) A_\tau \right).
\end{align*}
Since $A_\zeta$ and $A_\tau$ have continuous limits as $t \to +\infty$ for $r$ fixed, we have
\[ | \tilde{A}_t | \la_r \e^{-Ht} \quad \text{and} \quad | \tilde{A}_r | \la_r \e^{-Ht}. \]
Expanding the $3$-sphere norm $|\tilde{\mathbf{A}}|^2_{\mathfrak{s}_3}$,
\[ | \tilde{\mathbf{A}} |^2_{\mathfrak{s}_3} = \tilde{A}_\zeta^2 + \frac{1}{\sin^2 \zeta} | \tilde{A} |^2_{\mathfrak{s}_2} \la 1, \]
we see that $| \tilde{A} |_{\mathfrak{s}_2} \la \sin \zeta$, where one computes $\sin \zeta = \sech(Ht) Hr F_t(r)^{-1/2}$. Thus
\[  \frac{1}{r} |\tilde{A}|_{\mathfrak{s}_2} \la_r \e^{-Ht} \]
as $t \to + \infty$.

Now suppose $m=3$. This in particular means that
\[ | \slashgrad \phi |^2 = ( \partial_\zeta \phi )^2 + \frac{1}{\sin^2 \zeta} | \nabla^{\mathfrak{s}_2} \phi |^2 \]
has a continuous limit on $\scri^+$, and so $\partial_\zeta \phi$ and $ (\sin \zeta)^{-1} | \nabla^{\mathfrak{s}_2} \phi |$ do too. Since $\phi$ scales conformally as $\phi = \Omega^{-1} \tilde{\phi}$, one computes
\begin{align} \label{delzetaphi} \begin{split} \partial_\zeta \phi & = H^{-1} \cosh(Ht) F_t(r)^{1/2} \Big( r F(r)^{-1/2} \sinh(Ht) \partial_t \tilde{\phi} \\
& + H^{-1} F(r)^{1/2} \cosh(Ht) \partial_r \tilde{\phi} \Big)
\end{split}
\end{align}
and
\begin{align} \label{deltauphi}  \begin{split} \partial_\tau \phi + (\partial_\tau \Omega) \Omega^{-1} \phi &= H^{-1} \cosh(Ht) F_t(r)^{1/2} \Big( H^{-1} F(r)^{-1/2} \cosh(Ht) \partial_t \tilde{\phi} \\
& + r F(r)^{1/2} \sinh(Ht) \partial_r \tilde{\phi} \Big).
\end{split}
\end{align}
Since $\Omega \partial_\zeta \phi$ and $\Omega \partial_\tau \phi + (\partial_\tau \Omega ) \phi$ have continuous limits on $\scri^+$, one sees that 
\[ |\partial_t \tilde{\phi}| \la_r \e^{-Ht} \quad \text{and} \quad | \partial_r \tilde{\phi} | \la_r \e^{-Ht} \]
as $t \to + \infty$. For the $\mathbb{S}^2$ derivatives, the fact that $(\sin \zeta)^{-1} |\nabla^{\mathfrak{s}_2} \phi| = \Omega^{-1} (\sin \zeta)^{-1} | \nabla^{\mathfrak{s}_2} \tilde{\phi} |$ has a continuous limit on $\scri^+$ implies that
\begin{equation} \label{delS2phi} \left| \frac{1}{r} \nabla^{\mathfrak{s}_2} \tilde{\phi} \right| \la_r \e^{-2Ht} \end{equation}
as $t \to + \infty$. Let us study the $\e^{-Ht}$ component of $\tilde{\phi}$,
\[ \tilde{\varphi} \defeq \e^{Ht} \tilde{\phi}. \]
Rewriting \cref{delzetaphi} and \cref{deltauphi} in terms of $\tilde{\varphi}$, one has
\[ \mathcal{O}\left(\e^{-Ht}\right) = r F(r)^{-1/2} \sinh(Ht) \e^{-Ht} ( \partial_t \tilde{\varphi} - H \tilde{\varphi}) + H^{-1} F(r)^{1/2} \cosh(Ht)\e^{-Ht}\partial_r \tilde{\varphi}  \]
and
\begin{align*} \mathcal{O}\left(\e^{-Ht}\right) - F(r)^{1/2} \sinh(Ht) \e^{-Ht} \tilde{\varphi} &= H^{-1} F(r)^{-1/2} \cosh(Ht) \e^{-Ht} ( \partial_t \tilde{\varphi}  - H \tilde{\varphi}  ) \\
& + r F(r)^{1/2} \sinh(Ht) \e^{-Ht} \partial_r \tilde{\varphi},
\end{align*}
which taking the limit $t \to +\infty$ become
\begin{align*}  0 & \approx Hr \partial_t \tilde{\varphi} - H^2 r \tilde{\varphi} + F \partial_r \tilde{\varphi}, \\
- H F \tilde{\varphi} & \approx \partial_t \tilde{\varphi} - H \tilde{\varphi} + Hr F \partial_r \tilde{\varphi},
\end{align*}
where $\approx$ denotes equality at $t = + \infty$. Solving these algebraically shows that $\partial_t \tilde{\varphi} \approx 0$ and
\[ H^2 r \tilde{\varphi} \approx F(r) \partial_r \tilde{\varphi}. \]
The bound \eqref{delS2phi} shows that at $t= +\infty$ the function $\tilde{\varphi}$ is independent of the $\mathbb{S}^2$ coordinates, so the above equation is an ODE in $r$, with solution 
\[ \tilde{\varphi}(r) \approx \frac{1}{\sqrt{F(r)}} \tilde{\varphi}(0). \]
We conclude that there exists a constant $c$ such that
\[ \tilde{\phi} \sim c F(r)^{-1/2}  \e^{-Ht} + \mathcal{O} \left( \e^{-2Ht} \right) \quad \text{as } t \to +\infty. \]
as $t \to +\infty$. One can check by hand that $\tilde{\Phi}_1(t,r) = F(r)^{-1/2} \e^{-Ht}$ is a spherically symmetric solution of the uncharged ($A_a = 0$) conformally invariant free wave equation
\[ (\tilde{\Box} + \frac{1}{6} \tilde{R})\tilde{\Phi}_1 = F(r)^{-1}\partial_t^2 \tilde{\Phi}_1 - \frac{1}{r^2} \partial_r ( r^2 F(r) \partial_r \tilde{\Phi}_1) - \frac{1}{r^2} \Delta^{\mathfrak{s}_2}\tilde{\Phi}_1 = 0. \] 
\hspace{\fill} $\blacksquare$

\section*{Acknowledgments} The author wishes to thank Lionel Mason, Qian Wang, Jan Sbierski, \mbox{Jean-Philippe} Nicolas and Mihalis Dafermos for useful guidance and discussions. This work was supported by the Engineering and Physical Sciences Research Council grant [EP/L05811/1].


\appendix


\section{The Geometry of \texorpdfstring{$\mathbb{S}^3$}{S3}}

\subsection{Projection onto Divergence Free \texorpdfstring{$1$}{1}-Forms} \label{sec:divergencefree1forms}

Let $*$ denote the Hodge star operator on $\mathbb{S}^3$ and $\d$ the exterior derivative on $\mathbb{S}^3$. Let $\mathbf{A}$ be a $1$-form and $f$ a function on $\mathbb{S}^3$, and write
\begin{align*} & \operatorname{curl} \mathbf{A} \defeq * \, \d \mathbf{A}, \\
& \operatorname{div} \mathbf{A} \defeq *\,  \d * \mathbf{A}, \\
& \operatorname{grad} f \defeq \d f.
\end{align*}
It is easy to check that the definitions of $\operatorname{div} \mathbf{A}$ and $\operatorname{grad} f$ coincide with the notions of $\operatorname{div}$ and $\operatorname{grad}$ in terms of the Levi--Civita connection $\slashgrad$ on $\mathbb{S}^3$, that is $\operatorname{div} \mathbf{A} = \slashgrad_\mu \mathbf{A}^\mu$ and $ (\operatorname{grad} f )_\mu = \slashgrad_\mu f$. With these definitions
\[ \operatorname{curl} ( \operatorname{curl} \mathbf{A} ) - \operatorname{grad} ( \operatorname{div} \mathbf{A} ) = * \, \d * \d \mathbf{A} - \d * \d * \mathbf{A} = \delta \d \mathbf{A} + \d \delta \mathbf{A} \eqdef - \slashed{\Delta}^{(1)} \mathbf{A}, \]
where $\delta \defeq (-1)^{3k} * \d *$ is the \emph{codifferential} acting on $k$-forms on $\mathbb{S}^3$ and the operator
\begin{align*} -\slashed{\Delta}^{(1)} : \Gamma (\Lambda^1 \mathbb{S}^3) & \longrightarrow \Gamma(\Lambda^1 \mathbb{S}^3), \\ - \slashed{\Delta}^{(1)} \defeq \delta & \d + \d \delta,
\end{align*}
is the \emph{Hodge Laplacian} on $1$-forms on $\mathbb{S}^3$. The operator $\slashed{\Delta}^{(1)}$ can be extended to act on arbitrary $k$-forms in the obvious way (giving a number of operators $\slashed{\Delta}^{(k)}$, if one wishes to distinguish between their domains), but it is important to note that if $k\neq 0$ the action of $\slashed{\Delta}^{(k)}$ differs from the connection Laplacian $\slashed{\Delta} \defeq \slashgrad^\mu \slashgrad_\mu$ in a way that depends on the degree of the forms it is acting on. The difference is given by the Weitzenb\"ock formula, which in the case of $1$-forms is known as Bochner's theorem (see \S2.2.2 of \cite{Rosenberg1997}).

\begin{theorem}[Bochner's Theorem] Let $(\mathcalboondox{N}, g)$ be a Riemannian manifold with a positive definite metric $g$ and let $\nabla$ be the Levi--Civita connection of $g$. Considered as operators $\Gamma(\Lambda^1 \mathcalboondox{N}) \to \Gamma(\Lambda^1 \mathcalboondox{N})$, the Hodge Laplacian $\Delta^{(1)}$ and the connection Laplacian $\Delta = \nabla_\mu \nabla^\mu$ are related by
\[ - \Delta^{(1)} = \Delta + R, \] 
where $R$ is the scalar curvature of $g$.
\end{theorem}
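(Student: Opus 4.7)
The proof is a local computation in tensor calculus: unpacking $\delta d + d\delta$ on a $1$-form via the Levi--Civita connection and converting the commutator of covariant derivatives that arises into a Ricci tensor term. Specifically, since $\nabla$ is torsion-free, $(d\omega)_{ab} = \nabla_a \omega_b - \nabla_b \omega_a$, and the codifferential acts as minus a divergence: $\delta\omega = -\nabla^a \omega_a$ on $1$-forms and $(\delta \Omega)_a = -\nabla^b \Omega_{ba}$ on $2$-forms. Substituting these into $(\delta d \omega)_a$ and $(d \delta \omega)_a$ and summing, the plain Laplacian pieces assemble into $-\nabla^b \nabla_b \omega_a = -\Delta \omega_a$, and the remaining terms collapse into the single commutator $[\nabla^b, \nabla_a] \omega_b$.

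The next step is to invoke the Ricci identity in Penrose's convention, $[\nabla_c, \nabla_a] \omega_b = R^d{}_{bca} \omega_d$, and contract the upper index against the $b$ in $\omega_b$. Using the pair-swap symmetry $R_{abcd} = R_{cdab}$ together with the definition $R_{ab} = R^c{}_{acb}$, the doubly-traced Riemann tensor $g^{bc} R^d{}_{bca}$ reduces (up to an overall sign fixed by the curvature conventions) to the Ricci endomorphism $R^d{}_a$. Combining this with the first step yields the Weitzenb\"ock identity for $1$-forms
\[
-\Delta^{(1)} \omega_a = \Delta \omega_a + R_{ab}\omega^b,
\]
valid on any Riemannian manifold. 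The scalar form stated in the theorem is then equivalent on Einstein manifolds --- in particular on $\mathbb{S}^3$ with the standard metric --- where $R_{ab}$ is proportional to $g_{ab}$ with proportionality constant determined by the scalar curvature, so that the Ricci endomorphism simply acts as multiplication.

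The only place requiring care is this single index-gymnastic step: identifying the twice-traced Riemann tensor with the Ricci tensor, which is a one-line application of pair-swap symmetry (or alternatively of the first Bianchi identity combined with the antisymmetries $R_{abcd} = -R_{bacd} = -R_{abdc}$). The remaining steps --- expanding $d$ and $\delta$ via $\nabla$, collecting terms, and invoking the Ricci identity --- are completely routine. Since the identity is purely local and algebraic in the derivatives of $\omega$ and the curvature, no analytic hypotheses beyond sufficient regularity of $\omega$ are needed.
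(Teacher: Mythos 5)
Your strategy --- expand $\delta \d + \d \delta$ on a $1$-form via the Levi--Civita connection and turn the resulting commutator into a curvature term --- is the standard Weitzenb\"ock computation, and since the paper offers no proof of this statement (it only cites Rosenberg \S2.2.2), your argument must stand on its own. Up to the identity
\[ (\delta \d + \d \delta)\omega_a \;=\; -\nabla^b \nabla_b\, \omega_a + [\nabla^b, \nabla_a]\,\omega_b \;=\; -\nabla^b\nabla_b\,\omega_a \pm R_a^{\phantom{a}b}\omega_b \]
(the sign of the Ricci term depending on the curvature convention), what you do is correct. The genuine gap is the final step, where you pass from the Ricci endomorphism to the scalar curvature by asserting the two forms are ``equivalent on Einstein manifolds.'' They are not: on an Einstein $n$-manifold $R_{ab} = \tfrac{R}{n} g_{ab}$, so the Ricci endomorphism acts as multiplication by $R/n$, not by $R$. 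On $\mathbb{S}^3$ in the paper's conventions this is $-2$ versus $-6$, a factor of $3$; and on a general Riemannian manifold (the stated hypothesis) no scalar multiple reproduces the Ricci endomorphism at all. Your (essentially correct) computation therefore proves the Weitzenb\"ock identity with $\mathrm{Ric}$ and actually \emph{contradicts} the statement with $R$ --- the statement as quoted is a misquotation of Bochner's theorem, and the specialisation $-\slashed{\Delta}^{(1)} = \slashed{\Delta} - 6$ used afterwards inherits the same factor. Your proof does not close this gap; it papers over it with the ``Einstein manifold'' remark.

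A secondary issue is internal sign consistency. Your first step gives $(\delta \d + \d\delta)\omega_a = -\Delta \omega_a + [\nabla^b,\nabla_a]\omega_b$ with $\Delta = \nabla^b\nabla_b$, so with the definition $-\Delta^{(1)} \defeq \delta\d + \d\delta$ used in the paper your final display should carry $-\Delta\omega_a$, not $+\Delta\omega_a$; the sign of the rough Laplacian cannot be absorbed into a choice of curvature convention the way the sign of the Ricci term can. You should fix one convention for each of $\Delta^{(1)}$, $\Delta$ and $R^{a}_{\phantom{a}bcd}$ (noting that the paper's Penrose convention makes $R(\mathbb{S}^3) = -6 < 0$ and $R_{\mu\nu} = -2(\mathfrak{s}_3)_{\mu\nu}$, flipping the Ricci term relative to the geometer's convention) and verify that both terms of the identity come out consistently.
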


If $\mathcalboondox{N} = \mathbb{S}^3$, we thus have
\[ -\slashed{\Delta}^{(1)} = \slashed{\Delta} - 6. \]
Now suppose that $\mathbf{A} \in \Gamma(\Lambda^1 \mathbb{S}^3)$ satisfies the Coulomb gauge $\operatorname{div} \mathbf{A} = 0$. Then
\[ \operatorname{curl} (\operatorname{curl} \mathbf{A}) = - \slashed{\Delta}^{(1)} \mathbf{A} = (\slashed{\Delta} - 6) \mathbf{A}. \]
Given any $\mathbf{A} \in \Gamma(\Lambda^1 \mathbb{S}^3)$, the elliptic equation
\begin{equation} \label{projectiondivergencefree} (\slashed{\Delta} - 6) \mathbf{B} = \operatorname{curl}(\operatorname{curl} \mathbf{A}) \end{equation}
on $\mathbb{S}^3$ has a unique solution $\mathbf{B} \in \Gamma(\Lambda^1 \mathbb{S}^3)$. This allows us to define the projection onto divergence free $1$-forms $\mathcal{P} : \Gamma(\Lambda^1  \mathbb{S}^3) \to \Gamma(\Lambda^1 \mathbb{S}^3)$,
\[ \mathcal{P}\mathbf{A} \defeq (\slashed{\Delta} - 6)^{-1} \operatorname{curl}(\operatorname{curl} \mathbf{A}). \] 
By construction, for any $\mathbf{A}$ satisfying $\operatorname{div} \mathbf{A} = 0$, $\mathcal{P} \mathbf{A} = \mathbf{A}$, and $\operatorname{div} \mathcal{P} \mathbf{B} = 0$ for any $\mathbf{B}$. This second identity follows by commuting the $\operatorname{div}$ operator into the equation \eqref{projectiondivergencefree}. Furthermore, for any function $f$
\[ \mathcal{P} ( \operatorname{grad} f ) = ( \slashed{\Delta} - 6)^{-1} ( \operatorname{curl} ( \operatorname{curl} (\operatorname{grad} f ))) = (\slashed{\Delta} - 6)^{-1} (\mathbf{0} ) = \mathbf{0}. \]

\subsection{Christoffel symbols and Curvature Tensors}

\begin{proposition} Since $\mathbb{S}^3$ is maximally symmetric, the Ricci $R_{\mu \nu} = R_{\mu \nu}(\mathfrak{s}_3)$ and Riemann $R_{\mu \nu \rho \sigma} = R_{\mu \nu \rho \sigma}(\mathfrak{s}_3)$ tensors of $\mathbb{S}^3$ are expressible entirely in terms of the metric $\mathfrak{s}_3$, 
\[ R_{\mu \nu} = - 2 (\mathfrak{s}_3)_{\mu \nu}, \]
and
\[ R_{\mu \nu \rho \sigma} = (\mathfrak{s}_3)_{\rho \nu} (\mathfrak{s}_3)_{\mu \sigma} - (\mathfrak{s}_3)_{\nu \sigma} (\mathfrak{s}_3)_{\mu \rho}. \]
The scalar curvature of $\mathbb{S}^3$ is $R(\mathfrak{s}_3) = -6$.
\end{proposition}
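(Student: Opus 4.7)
The plan is to exploit the maximal symmetry of $(\mathbb{S}^3, \mathfrak{s}_3)$: the isometry group $\mathrm{SO}(4)$ has the maximal dimension $6 = 3(3+1)/2$ and acts transitively on the orthonormal frame bundle. A standard representation-theoretic argument—the stabiliser $\mathrm{SO}(3)$ of a point $p$ acts on $T_p \mathbb{S}^3$ as the defining representation, and the space of $(0,4)$-tensors at $p$ that have the algebraic symmetries of a curvature tensor and are invariant under this action is one-dimensional—forces
\[ R_{\mu\nu\rho\sigma} = K \left( (\mathfrak{s}_3)_{\rho\nu}(\mathfrak{s}_3)_{\mu\sigma} - (\mathfrak{s}_3)_{\nu\sigma}(\mathfrak{s}_3)_{\mu\rho} \right) \]
for some constant $K \in \mathbb{R}$, which by homogeneity is the same at every point of $\mathbb{S}^3$.

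It then remains to pin down $K$. The simplest route is to compute a single scalar invariant in any convenient coordinate patch. In the hyperspherical coordinates $\mathfrak{s}_3 = \d\zeta^2 + \sin^2\zeta\, \mathfrak{s}_2$ used throughout the paper, the non-vanishing Christoffel symbols are immediate and a short direct calculation of, say, $R_{\zeta\theta\zeta\theta}$ or of the trace $R$ fixes $K$. One should be careful that the sign is the one dictated by Penrose's convention $[\nabla_a, \nabla_b]X^c = - R^c{}_{dab}X^d$, which is the opposite of the more common convention under which the unit round $3$-sphere has positive sectional curvature $+1$. An equivalent and perhaps conceptually cleaner alternative is to invoke the Gauss equation for the isometric embedding $\mathbb{S}^3 \hookrightarrow (\mathbb{R}^4, \delta)$ as the unit sphere, whose second fundamental form equals $\pm \mathfrak{s}_3$, delivering the claimed formula up to the same sign bookkeeping.

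The Ricci tensor and scalar curvature then drop out as purely algebraic contractions. Using the paper's definition $R_{\mu\nu} = R^\rho{}_{\mu\rho\nu} = \mathfrak{s}_3^{\rho\alpha} R_{\alpha\mu\rho\nu}$ and substituting the Riemann formula yields two terms, one collapsing via $\mathfrak{s}_3^{\rho\alpha}(\mathfrak{s}_3)_{\rho\mu} = \delta^\alpha_\mu$ to a single factor of $(\mathfrak{s}_3)_{\mu\nu}$ and the other producing a trace $\mathfrak{s}_3^{\rho\alpha}(\mathfrak{s}_3)_{\alpha\rho} = 3$; the result is $R_{\mu\nu} = -2(\mathfrak{s}_3)_{\mu\nu}$, and a further trace gives $R = -6$.

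There is no genuine analytic obstacle in this statement; the whole proposition is really a careful unpacking of maximal symmetry plus a sign convention. The only thing to watch is the bookkeeping of Penrose's sign, which is exactly why the positive-definite round $3$-sphere ends up with \emph{negative} scalar curvature in this paper.
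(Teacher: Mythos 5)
Your argument is correct: the paper states this proposition without any proof (it is a bare computational fact recorded in the appendix), so there is nothing to compare against, but your route — maximal symmetry forcing $R_{\mu\nu\rho\sigma} = K\bigl((\mathfrak{s}_3)_{\rho\nu}(\mathfrak{s}_3)_{\mu\sigma} - (\mathfrak{s}_3)_{\nu\sigma}(\mathfrak{s}_3)_{\mu\rho}\bigr)$, fixing $K$ by a single coordinate computation or the Gauss equation, and then contracting — is the standard and complete way to establish it. Your sign bookkeeping is also right: under Penrose's convention $[\nabla_a,\nabla_b]X^c = -R^c{}_{dab}X^d$ the curvature tensor is the negative of the usual one, and the contraction $\mathfrak{s}_3^{\rho\alpha}R_{\alpha\mu\rho\nu} = (\mathfrak{s}_3)_{\mu\nu} - 3(\mathfrak{s}_3)_{\mu\nu} = -2(\mathfrak{s}_3)_{\mu\nu}$ and the further trace $R=-6$ both check out against the paper's own statement in its conventions section.
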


\begin{proposition} \label{prop:Christoffels} In the coordinates $(\tau, \zeta, \theta, \phi)$ the non-zero Christoffel symbols of the metric $\mathfrak{e}$ are
\begin{align*} & \Gamma^{\zeta}_{\theta \theta} = - \sin \zeta \cos \zeta, && \Gamma^{\zeta}_{\phi \phi} = - \sin^2 \theta \sin \zeta \cos \zeta, \\
& \Gamma^{\theta}_{\zeta \theta} = \cot \zeta = \Gamma^{\theta}_{\theta \zeta}, && \Gamma^{\theta}_{\phi \phi} = - \sin \theta \cos \theta, \\
& \Gamma^{\phi}_{\zeta \phi} = \cot \zeta = \Gamma^{\phi}_{\phi \zeta}, && \Gamma^{\phi}_{\theta \phi} = \cot \theta = \Gamma^{\phi}_{\phi \theta}.
\end{align*}
\end{proposition}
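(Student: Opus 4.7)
The plan is to exploit the direct-sum structure of the metric to reduce the computation to the standard round metric on $\mathbb{S}^3$, and then to compute the remaining Christoffel symbols directly from the usual formula
\[ \Gamma^a_{bc} = \tfrac{1}{2} g^{ad} ( \partial_b g_{cd} + \partial_c g_{bd} - \partial_d g_{bc} ). \]
Since $\mathfrak{e} = g_{\mathbb{R}} \oplus (-\mathfrak{s}_3)$ with $\mathfrak{s}_3 = \d\zeta^2 + \sin^2 \zeta \, \d \theta^2 + \sin^2 \zeta \sin^2 \theta \, \d \phi^2$, the only non-zero components are $\mathfrak{e}_{\tau\tau} = 1$, $\mathfrak{e}_{\zeta\zeta} = -1$, $\mathfrak{e}_{\theta\theta} = -\sin^2 \zeta$, $\mathfrak{e}_{\phi \phi} = - \sin^2 \zeta \sin^2 \theta$, and none of them depend on $\tau$.

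First I would observe that all components of the metric are independent of $\tau$ and that $\mathfrak{e}_{\tau b} = 0$ for $b \neq \tau$ and $\mathfrak{e}_{\tau \tau}$ is constant. Plugging into the Christoffel formula, every term involving an index equal to $\tau$ either differentiates a constant or picks up $\mathfrak{e}^{\tau b}$ with $b \neq \tau$, so all Christoffels with at least one $\tau$ index vanish. This reflects the splitting $\nabla = \partial_\tau \oplus \slashgrad$ noted earlier in the paper. Moreover, the overall minus sign in $-\mathfrak{s}_3$ cancels between the inverse metric and the derivatives of the metric in the formula, so the remaining Christoffels agree with those of the positive-definite metric $\mathfrak{s}_3$ on $\mathbb{S}^3$.

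It therefore suffices to compute the Christoffels of $\mathfrak{s}_3$ in the coordinates $(\zeta, \theta, \phi)$, using the inverse components $(\mathfrak{s}_3)^{\zeta\zeta} = 1$, $(\mathfrak{s}_3)^{\theta\theta} = \csc^2 \zeta$, $(\mathfrak{s}_3)^{\phi\phi} = \csc^2 \zeta \csc^2 \theta$. The only derivatives of components that are not identically zero are $\partial_\zeta (\sin^2 \zeta) = 2 \sin \zeta \cos \zeta$, $\partial_\zeta ( \sin^2 \zeta \sin^2 \theta ) = 2 \sin \zeta \cos \zeta \sin^2 \theta$, and $\partial_\theta ( \sin^2 \zeta \sin^2 \theta ) = 2 \sin^2 \zeta \sin \theta \cos \theta$. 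Substituting these into the formula produces exactly the six listed symbols, e.g.
\[ \Gamma^{\theta}_{\zeta \theta} = \tfrac{1}{2} (\mathfrak{s}_3)^{\theta\theta} \partial_\zeta (\mathfrak{s}_3)_{\theta\theta} = \tfrac{1}{2} \csc^2 \zeta \cdot 2 \sin \zeta \cos \zeta = \cot \zeta, \]
and similarly for the others. The symmetry of the lower indices then accounts for the repeated entries $\Gamma^\theta_{\theta\zeta}$, $\Gamma^\phi_{\phi\zeta}$, $\Gamma^\phi_{\phi\theta}$.

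There is no genuine obstacle here; the statement is purely computational. The only thing one must be careful about is the sign bookkeeping when passing between $-\mathfrak{s}_3$ and $\mathfrak{s}_3$, which is immediate once one notes that the Christoffel symbols are invariant under a constant conformal rescaling by $-1$.
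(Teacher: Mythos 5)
Your proof is correct: the paper states \Cref{prop:Christoffels} without proof, treating it as a routine computation, and your argument (block-diagonality kills all Christoffels with a $\tau$ index, the overall sign of $-\mathfrak{s}_3$ cancels in $\Gamma^a_{bc} = \tfrac{1}{2} g^{ad}(\partial_b g_{cd} + \partial_c g_{bd} - \partial_d g_{bc})$ since Christoffel symbols are unchanged under rescaling the metric by any nonzero constant, and then the standard hyperspherical computation for $\mathfrak{s}_3$) is exactly the calculation the paper implicitly relies on, and all six symbols check out. The only cosmetic point is that calling the multiplication by $-1$ a ``constant conformal rescaling'' is nonstandard terminology (conformal factors are positive), but the underlying invariance you use is valid.
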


\begin{proposition} In the coordinates $(\tau, \zeta, \theta, \phi)$ the non-zero components of the Ricci tensor of $\mathfrak{e}$ are 
\[ R_{\zeta \zeta} = -2, \quad R_{\theta \theta} = -2 \sin^2 \zeta, \quad R_{\phi \phi} = -2 \sin^2 \zeta \sin^2 \theta. \]
In fact,
\[ R_{ab} = -2 \left( 0 \oplus \mathfrak{s}_3 \right), \]
and the scalar curvature is thus
\[ R = 6. \]
\end{proposition}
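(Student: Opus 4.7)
The plan is to exploit the product structure of $\mathfrak{e} = 1 \oplus (-\mathfrak{s}_3)$ together with the preceding proposition on the intrinsic geometry of $\mathbb{S}^3$, rather than redoing the curvature calculation from scratch in coordinates. Since $\nabla = \partial_\tau \oplus \slashgrad$ splits as noted in the paper, and $\partial_\tau$ commutes with $\slashgrad$, all commutators $[\nabla_a, \nabla_b]$ involving the index $\tau$ vanish. Consequently the Riemann tensor (in Penrose's sign convention) has components only among the $\mathbb{S}^3$ indices, and hence so does the Ricci tensor: $R_{\tau a} = 0$ for every $a$.

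Next I would observe that the $\mathbb{S}^3$-indexed components of $R_{ab}(\mathfrak{e})$ coincide numerically with those of $R_{\mu\nu}(\mathfrak{s}_3)$. The reason is that the Levi--Civita connection is invariant under rescaling the metric by a nonzero constant (in particular by $-1$), so the Christoffel symbols of the spatial block $-\mathfrak{s}_3$ agree with those of $\mathfrak{s}_3$. This can be read off from \Cref{prop:Christoffels}: every listed $\Gamma^\alpha_{\beta\gamma}$ involves only spatial indices and matches the standard $\mathbb{S}^3$ Christoffel symbols. Therefore $R^\mu_{\phantom{\mu}\nu\rho\sigma}(\mathfrak{e}) = R^\mu_{\phantom{\mu}\nu\rho\sigma}(\mathfrak{s}_3)$ and, contracting, the spatial Ricci components are
\[
R_{\mu\nu}(\mathfrak{e}) = R_{\mu\nu}(\mathfrak{s}_3) = -2(\mathfrak{s}_3)_{\mu\nu},
\]
by the preceding proposition. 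Expanding in the coordinates $(\zeta,\theta,\phi)$ with $\mathfrak{s}_3 = \d\zeta^2 + \sin^2\zeta\bigl(\d\theta^2 + \sin^2\theta\,\d\phi^2\bigr)$ yields the three stated values
$R_{\zeta\zeta} = -2$, $R_{\theta\theta} = -2\sin^2\zeta$, $R_{\phi\phi} = -2\sin^2\zeta\sin^2\theta$, and combined with $R_{\tau a} = 0$ gives the compact form $R_{ab} = -2(0 \oplus \mathfrak{s}_3)$.

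Finally, to obtain the scalar curvature I would contract with $\mathfrak{e}^{ab}$. Because $\mathfrak{e} = 1 \oplus (-\mathfrak{s}_3)$, its inverse is $\mathfrak{e}^{-1} = 1 \oplus (-\mathfrak{s}_3^{-1})$, so
\[
R = \mathfrak{e}^{ab} R_{ab} = \bigl(-\mathfrak{s}_3^{\mu\nu}\bigr)\bigl(-2(\mathfrak{s}_3)_{\mu\nu}\bigr) = 2\,\mathfrak{s}_3^{\mu\nu}(\mathfrak{s}_3)_{\mu\nu} = 2 \cdot 3 = 6,
\]
exactly as claimed. There is no genuine obstacle here; the only point that requires care is sign bookkeeping, since one must simultaneously account for Penrose's curvature convention (which flips the sign of $R_{\mu\nu}(\mathfrak{s}_3)$ relative to the standard convention) and the Lorentzian signature $(+,-,-,-)$ (which flips the sign of the spatial inverse metric). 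These two sign flips cancel in the scalar curvature, producing the positive value $R = 6$ consistent with the remark in \Cref{sec:conventions} that a spacelike $3$-sphere in the chosen signature carries scalar curvature $+6$.
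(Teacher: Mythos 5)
Your proposal is correct, and it takes a genuinely different route from the paper, which states this proposition bare in the appendix; the route the paper implies is a direct coordinate computation, since it lists the Christoffel symbols of $\mathfrak{e}$ in \Cref{prop:Christoffels} and the Riemann components in \Cref{prop:Riemanntensorcomponents}, from which the Ricci components follow by the contraction $R_{ab} = R^c_{\phantom{c}acb}$. You instead argue structurally: the product splitting $\mathfrak{e} = 1 \oplus (-\mathfrak{s}_3)$ with a flat one-dimensional factor kills every curvature component carrying a $\tau$ index, and the invariance of the Levi--Civita connection under rescaling by the nonzero constant $-1$ identifies the spatial block $R^\mu_{\phantom{\mu}\nu\rho\sigma}(\mathfrak{e})$ with the intrinsic curvature of $(\mathbb{S}^3, \mathfrak{s}_3)$, so that the preceding proposition hands you $R_{\mu\nu}(\mathfrak{e}) = -2(\mathfrak{s}_3)_{\mu\nu}$ with no further computation; the final contraction with $\mathfrak{e}^{-1} = 1 \oplus (-\mathfrak{s}_3^{-1})$ then gives $R = 2\,\mathfrak{s}_3^{\mu\nu}(\mathfrak{s}_3)_{\mu\nu} = 6$. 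Each step checks out, including the sign bookkeeping: the Penrose-convention flip in $R_{\mu\nu}(\mathfrak{s}_3)$ and the signature flip in the spatial inverse metric cancel in $R$, exactly as the remark in \Cref{sec:conventions} anticipates. What each approach buys: yours is shorter, makes the two cancelling sign conventions explicit rather than buried in index arithmetic, and generalizes verbatim to any Lorentzian product $1 \oplus (-g)$ over a Riemannian factor; the paper's coordinate computation, by contrast, produces along the way the full tables of Christoffel symbols and Riemann components, which are needed independently elsewhere (e.g.\ for the commutator estimates and the integrations by parts in the energy identities), so for the paper's purposes the explicit computation is not wasted work.
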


\begin{proposition} \label{prop:Riemanntensorcomponents} In the coordinates $(\tau, \zeta, \theta, \phi)$ the non-zero components of the Riemann tensor of $\mathfrak{e}$ are
\begin{align*} & R^{\zeta}_{\phantom{\zeta} \theta \zeta \theta} = - \sin^2 \zeta = - R^{\zeta}_{\phantom{\zeta} \theta \theta \zeta}, && R^\zeta_{\phantom{\zeta} \phi \zeta \phi} = - \sin^2 \zeta \sin^2 \theta = - R^{\zeta}_{\phantom{\zeta} \phi \phi \zeta}, \\
& R^{\theta}_{\phantom{\theta} \zeta \zeta \theta} = 1 = - R^{\theta}_{\phantom{\theta} \zeta \theta \zeta}, && R^{\theta}_{\phantom{\theta} \phi \theta \phi} = - \sin^2 \zeta \sin^2 \theta = - R^{\theta}_{\phantom{\theta} \phi \phi \theta}, \\
& R^{\phi}_{\phantom{\phi} \zeta \zeta \phi} = 1 = - R^{\phi}_{\phantom{\phi} \zeta \phi \zeta}, && R^{\phi}_{\phantom{\phi} \theta \theta \phi} = \sin^2 \zeta = - R^{\phi}_{\phantom{\phi} \theta \phi \theta}.
\end{align*}
\end{proposition}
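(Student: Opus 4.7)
The plan is to reduce the computation to the sphere case and then apply the maximally symmetric formula from the earlier proposition, exploiting the product structure $\mathfrak{e} = g_{\mathbb{R}} \oplus (- \mathfrak{s}_3)$. First I would observe that since Christoffel symbols are invariant under constant rescaling of the metric (by any non-zero real, including $-1$), the Christoffel symbols of $-\mathfrak{s}_3$ agree with those of $\mathfrak{s}_3$, and by the direct sum structure no mixed $\tau$-spatial Christoffel symbols appear. These are exactly the ones recorded in \Cref{prop:Christoffels}. Consequently all components of $R^a_{\phantom{a}bcd}(\mathfrak{e})$ carrying at least one $\tau$ index vanish (confirming the assertion $[\partial_\tau, \slashgrad] = 0$ already used above), and the remaining components $R^\mu_{\phantom{\mu}\nu\rho\sigma}(\mathfrak{e})$ agree with the corresponding Riemann components of $(\mathbb{S}^3, \mathfrak{s}_3)$.

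Next I would invoke the maximally symmetric formula from the previous proposition,
\[ R_{\mu\nu\rho\sigma}(\mathfrak{s}_3) = (\mathfrak{s}_3)_{\rho\nu}(\mathfrak{s}_3)_{\mu\sigma} - (\mathfrak{s}_3)_{\nu\sigma}(\mathfrak{s}_3)_{\mu\rho}, \]
and raise the first index with $\mathfrak{s}_3^{-1}$ to obtain the compact closed form
\[ R^\mu_{\phantom{\mu}\nu\rho\sigma} = \delta^\mu_\sigma (\mathfrak{s}_3)_{\rho\nu} - \delta^\mu_\rho (\mathfrak{s}_3)_{\nu\sigma}. \]
It would then remain only to substitute the coordinate expression $\mathfrak{s}_3 = \d \zeta^2 + \sin^2\zeta \, \d\theta^2 + \sin^2\zeta \sin^2\theta \, \d\phi^2$ and read off the six listed components. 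For instance, $R^\zeta_{\phantom{\zeta}\theta\zeta\theta} = \delta^\zeta_\theta (\mathfrak{s}_3)_{\zeta\theta} - \delta^\zeta_\zeta (\mathfrak{s}_3)_{\theta\theta} = -\sin^2\zeta$, matching the first entry. The five remaining entries follow by exactly the same substitution, and the sign-paired companions are produced automatically by the antisymmetry $R^\mu_{\phantom{\mu}\nu\rho\sigma} = -R^\mu_{\phantom{\mu}\nu\sigma\rho}$ built into the closed-form expression.

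There is no substantive obstacle, only bookkeeping. The one item that deserves care is the sign convention: because the earlier proposition is already written in Penrose's convention (consistent with the paper's stipulation that $R(\mathfrak{s}_3) = -6$), no additional sign flip is needed when deducing the components of $R^\mu_{\phantom{\mu}\nu\rho\sigma}$ from it. A completely parallel route would bypass the maximally symmetric formula and instead plug the Christoffel symbols of \Cref{prop:Christoffels} into
\[ R^c_{\phantom{c}dab} = - \partial_a \Gamma^c_{bd} + \partial_b \Gamma^c_{ad} + \Gamma^c_{be}\Gamma^e_{ad} - \Gamma^c_{ae}\Gamma^e_{bd} \]
(the form of the standard Riemann formula dictated by $[\nabla_a, \nabla_b]X^c = - R^c_{\phantom{c}dab} X^d$) and verify the six components case by case; this is longer but gives an independent check.
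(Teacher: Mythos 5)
Your proposal is correct and takes essentially the paper's (implicit) route: the proposition is stated without proof as a direct computation, and your reduction via the product structure $\mathfrak{e} = g_{\mathbb{R}} \oplus (-\mathfrak{s}_3)$ — noting the Christoffel symbols are insensitive to the constant factor $-1$, so the spatial curvature components coincide with those of $(\mathbb{S}^3,\mathfrak{s}_3)$ and all $\tau$-components vanish — combined with the maximally symmetric formula of the preceding proposition is exactly the intended derivation. Your closed form $R^\mu_{\phantom{\mu}\nu\rho\sigma} = \delta^\mu_\sigma (\mathfrak{s}_3)_{\rho\nu} - \delta^\mu_\rho (\mathfrak{s}_3)_{\nu\sigma}$ is consistent with the paper's Penrose sign convention (it recontracts to $R_{\mu\nu} = -2(\mathfrak{s}_3)_{\mu\nu}$ and $R(\mathfrak{s}_3) = -6$), and substituting $\mathfrak{s}_3 = \d\zeta^2 + \sin^2\zeta\,\d\theta^2 + \sin^2\zeta\sin^2\theta\,\d\phi^2$ indeed reproduces all six listed components, with the paired entries following from antisymmetry in the last two indices.
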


\section{The Sobolev Embedding Theorem}

The following definitions and theorems can be found in chapter 2 of \cite{Aubin1998}.

\begin{definition} Let $(M,g)$ be a smooth Riemannian manifold of dimension $n$. For a real function $\phi$ belonging to $C^k(M)$, $k\geq0$ an integer, we define
\[ |\nabla^k \phi |^2 \defeq \left( \nabla^{a_1} \nabla^{a_2} \dots \nabla^{a_k} \phi \right) \left( \nabla_{a_1} \nabla_{a_2} \dots \nabla_{a_k} \phi \right), \]
and denote by $\mathfrak{C}^{k,p}$ the vector space of $C^{\infty}$ functions $\phi$ such that $|\nabla^l \phi | \in L^p(M)$ for all $ 0 \leq l \leq k$ and $p \geq 1$ a real number.
\end{definition}

\begin{definition} The Sobolev space $W^{k,p}(M)$ is the completion of $\mathfrak{C}^{k,p}$ with respect to the norm
\[ \| \phi \|_{W^{k,p}} \defeq \sum_{l=0}^k \| \nabla^l \phi \|_p. \]
The space $W^{k,p}(M)$ does not depend on the Riemannian metric $g$ (Theorem 2.20, \cite{Aubin1998}).
\end{definition}

\begin{theorem} Let $M$ be a smooth compact Riemannian manifold of dimension $n$, let $k$, $l$ be integers with $k > l \geq 0$, and let $p$, $q$ be real numbers with $1 \leq q < p$ satisfying
\[ \frac{1}{p} = \frac{1}{q} - \frac{(k-l)}{n}. \]
Then 
\[ W^{k,q}(M) \subset W^{l,p}(M), \]
and the identity operator is continuous (the embedding is compact).

Moreover, if 
\[ \frac{(k-r - \alpha)}{n} \geq \frac{1}{q}, \]
then
\[ W^{k,q}(M) \subset C^{r,\alpha}(M), \]
and the identity operator is continuous (the embedding is compact). Here $r \geq 0$ is an integer, $\alpha$ is a real number satisfying $0 < \alpha \leq 1$, $C^{r, \alpha}$ is the space of $C^r$ functions the $r$th derivatives of which belong to $C^\alpha$, $C^r$ is the space of functions $\phi$ of finite $\| \phi \|_{C^r} \defeq \max_{0 \leq l \leq r} \sup |\nabla^l u |$ norm, and $C^\alpha$ is the space of functions of finite $\| \phi \|_{C^\alpha} \defeq \sup | \phi | + \sup_{P \neq Q} \{ |\phi(P) - \phi(Q)| d(P,Q)^{-\alpha} \}$ norm.
\end{theorem}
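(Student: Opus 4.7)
The plan is to reduce the statement to the classical Sobolev and Morrey embeddings on bounded Euclidean domains, and the classical Rellich--Kondrachov theorem, by means of a finite atlas and partition of unity on $M$. Since $M$ is smooth and compact, we may cover it by finitely many coordinate charts $\{(U_i, \varphi_i)\}_{i=1}^N$ such that each $\varphi_i(U_i) \subset \mathbb{R}^n$ is a bounded open set with smooth boundary (one may shrink the charts and take closed balls inside), and such that on each chart the components $g_{\mu\nu}$ of the Riemannian metric are uniformly bounded, with uniformly bounded inverse and uniformly bounded derivatives up to order $k$. Fix a smooth partition of unity $\{\chi_i\}$ subordinate to $\{U_i\}$.

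First I would show that the intrinsic $W^{k,q}(M)$ norm is equivalent to $\sum_i \|(\chi_i \phi)\circ \varphi_i^{-1}\|_{W^{k,q}(\varphi_i(U_i))}$, where the right-hand norm is the usual Euclidean Sobolev norm. The nontrivial direction is the expansion of iterated covariant derivatives $\nabla^{a_1}\!\cdots\nabla^{a_k}(\chi_i\phi)$ into partial derivatives plus Christoffel-symbol contractions; these contribute only lower-order terms, which are absorbed into the full $W^{k,q}$ norm because of the uniform bounds on the metric coefficients and their derivatives. The uniform equivalence of Lebesgue norms under the coordinate map follows from $\sqrt{\det g}$ being bounded above and below on each chart. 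Thus the problem splits chart by chart into a statement about functions compactly supported in a bounded Euclidean domain.

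Next I invoke the classical theorems. For the continuous inclusion $W^{k,q}(M)\subset W^{l,p}(M)$ with $1/p = 1/q - (k-l)/n$: on each Euclidean chart, extend $(\chi_i\phi)\circ\varphi_i^{-1}$ by zero (permissible because it is compactly supported), and apply the Gagliardo--Nirenberg--Sobolev inequality iteratively $k-l$ times to control the $W^{l,p}(\mathbb{R}^n)$ norm by the $W^{k,q}(\mathbb{R}^n)$ norm. Summing over $i$ and using the norm equivalence yields the global estimate. For the Hölder inclusion under $(k-r-\alpha)/n \ge 1/q$: apply, on each chart, the Morrey--Sobolev inequality to conclude $C^{r,\alpha}$ regularity in local coordinates; the $C^{r,\alpha}(M)$ norm is then controlled by the sum of the local $C^{r,\alpha}$ norms, since transition functions are smooth and there are only finitely many charts. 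Compactness is obtained in the same way: the Rellich--Kondrachov theorem applied on each chart shows that any bounded sequence in $W^{k,q}(M)$ has a chartwise $W^{l,p}$-convergent (respectively $C^{r,\alpha}$-convergent) subsequence, and a diagonal extraction over the finite atlas produces a globally convergent subsequence.

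The main obstacle, as often in embedding theorems on manifolds, is bookkeeping rather than analysis: one must check carefully that the commutator terms arising from exchanging covariant derivatives with coordinate partial derivatives, together with the Leibniz terms coming from multiplication by $\chi_i$, are genuinely of lower differential order and therefore controlled by the full $W^{k,q}$ norm; and one must verify that the exponent relation $1/p = 1/q - (k-l)/n$ is preserved under the iterated application of the Euclidean Sobolev inequality. Neither step is conceptually difficult, but both require some care to keep all constants uniform across the finite atlas. With that in place, the three claims---continuous embedding into $W^{l,p}$, continuous embedding into $C^{r,\alpha}$, and compactness of both embeddings---follow in parallel from their Euclidean counterparts.
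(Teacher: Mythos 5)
The paper offers no proof of this statement to compare against: it is quoted, with the preceding definitions, from Chapter~2 of Aubin's book \cite{Aubin1998}, so your proposal has to be judged on its own merits. For the two \emph{continuity} assertions your outline is the standard and correct argument, and indeed essentially Aubin's: finite atlas with uniform bounds on the metric, its inverse, and its derivatives; partition of unity; equivalence of the intrinsic and chartwise $W^{k,q}$ norms modulo lower-order Christoffel and Leibniz terms; extension by zero; then iterated Gagliardo--Nirenberg--Sobolev for the $W^{l,p}$ inclusion (the intermediate exponents stay admissible since $1/q - j/n \geq 1/q - (k-l)/n = 1/p > 0$ for $0 \leq j \leq k-l$) and Morrey for the H\"older inclusion. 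All of that is sound bookkeeping.

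The genuine gap is your compactness claim. At the critical exponent $1/p = 1/q - (k-l)/n$ the embedding $W^{k,q}(M) \subset W^{l,p}(M)$ is \emph{not} compact, and chartwise Rellich--Kondrachov cannot show otherwise: that theorem gives compactness only into $W^{l,p'}$ for $p'$ \emph{strictly} less than $p$. A concentration family makes this concrete for $k-l=1$, $l=0$: in a fixed chart take $u_j(x) = j^{(n-q)/q} u(jx)$ with $u$ a fixed bump; then $\| \nabla u_j \|_{L^q}$ and $\| u_j \|_{L^p}$ are constant in $j$ while $u_j \rightharpoonup 0$, so no subsequence converges strongly in $L^p$. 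The same failure occurs at the H\"older endpoint $(k-r-\alpha)/n = 1/q$: with $q > n$, $\alpha = 1 - n/q$, the family $u_j(x) = j^{-\alpha} v(jx)$ is bounded in $W^{1,q}$ and tends to zero uniformly, yet its $C^{0,\alpha}$ seminorm equals that of $v$ for every $j$, so it has no $C^{0,\alpha}$-convergent subsequence. The parenthetical ``(the embedding is compact)'' in the statement corresponds to Aubin's Kondrakov theorem, which requires the strict inequalities $1/p > 1/q - (k-l)/n$, respectively $(k-r-\alpha)/n > 1/q$; your ``in parallel'' derivation of compactness is correct in that strictly subcritical regime but false at the stated borderline exponents. (A minor point: with a finite atlas no diagonal extraction is needed --- finitely many successive subsequence extractions suffice; diagonalisation is a device for countable families.)
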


\begin{theorem} \label{thm:sobolevembedding} Let $M$ be a smooth compact Riemannian manifold of dimension $n$ and let the real numbers $p$, $q$ satisfy
\[ \frac{1}{p} = \frac{1}{q} - \frac{1}{n} >0. \]
Then for every $\epsilon > 0$ there exists a constant $A_q(\epsilon)$ such that every $\phi \in W^{1,q}(M)$ satisfies
\[ \| \phi \|_p \leq \left( \mathsf{K}(n,q) + \epsilon \right) \| \nabla \phi \|_q + A_q(\epsilon) \| \phi \|_q, \]
where $\mathsf{K}(n,q)$ is the smallest constant having this property and is given by
\[ \mathsf{K}(n,q) = \left( \frac{q-1}{n-q} \right) \left( \frac{n-q}{n(q-1)} \right)^{\frac{1}{q}} \left( \frac{\Gamma(n+1)}{\Gamma(n/q) \Gamma(n+1 - n/q) \omega_{n-1}} \right)^{\frac{1}{n}} \]
for $1 < q< n$ and
\[ \mathsf{K}(n,1) = \frac{1}{n} \left( \frac{n}{\omega_{n-1}} \right)^{\frac{1}{n}}. \]
\end{theorem}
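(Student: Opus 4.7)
The plan is to establish the inequality by contradiction and concentration--compactness, leveraging the sharp Aubin--Talenti Sobolev inequality on $\mathbb{R}^n$ as the reference point. The sharp value of the constant comes directly from the Euclidean case, while the $\epsilon$ loss and additive term $A_q(\epsilon) \|\phi\|_q$ absorb the error from the local-Euclidean approximation on small balls together with the lower order term coming from the partition-of-unity/blow-up analysis.

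First I would record the sharp Euclidean Sobolev inequality: for $\psi \in W^{1,q}(\mathbb{R}^n)$ and $1<q<n$,
\[ \|\psi\|_{L^p(\mathbb{R}^n)} \leq \mathsf{K}(n,q) \|\nabla \psi\|_{L^q(\mathbb{R}^n)}, \]
with $\mathsf{K}(n,q)$ explicitly computed as in the statement and attained (in the limit) by the Aubin--Talenti radial profiles. The endpoint $q=1$ follows from the coarea formula and the sharp Euclidean isoperimetric inequality, which gives the constant $\mathsf{K}(n,1)$ as a geometric quantity. I would then reduce, by the density of $C^\infty(M)$ in $W^{1,q}(M)$, to the case of smooth $\phi$.

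Next I would argue by contradiction: assume that for some $\epsilon>0$ no $A_q(\epsilon)$ works. Then there is a sequence $\phi_k \in C^\infty(M)$ with $\|\phi_k\|_p = 1$, $\|\nabla \phi_k\|_q < 1/(\mathsf{K}(n,q)+\epsilon)$, and $\|\phi_k\|_q < 1/k$. By Rellich--Kondrachov compactness ($q<q^\ast = p$), $\phi_k \to 0$ strongly in $L^q$, so any weak $W^{1,q}$-limit vanishes. Since $\|\phi_k\|_p = 1$ the convergence in $L^p$ cannot be strong, and by a Lions-type concentration-compactness argument, after passing to a subsequence, the measures $|\phi_k|^p \, \dvol_g$ converge weakly-$*$ to a Dirac mass at some $x_0 \in M$. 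In normal coordinates at $x_0$ with injectivity-radius-sized chart, introduce the critical rescaling $\tilde{\phi}_k(y) = \lambda_k^{n/p}\phi_k(\exp_{x_0}(\lambda_k y))$ with $\lambda_k \to 0$ chosen so that the concentrated profile has unit scale. Under this scaling the pulled-back metric $g_k$ converges to $\delta_{\mu\nu}$ in $C^0_{\mathrm{loc}}(\mathbb{R}^n)$, while scale-invariance at the critical exponent preserves $\|\tilde{\phi}_k\|_{p}$ and $\|\nabla \tilde{\phi}_k\|_q$ up to $o(1)$. Extracting a weak limit $\tilde{\phi}_k \rightharpoonup U$ in $W^{1,q}_{\mathrm{loc}}(\mathbb{R}^n)$, concentration forces $\|U\|_{L^p(\mathbb{R}^n)} = 1$ while weak lower semicontinuity gives $\|\nabla U\|_{L^q(\mathbb{R}^n)} \leq 1/(\mathsf{K}(n,q) + \epsilon)$. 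The Euclidean sharp inequality then yields
\[ 1 = \|U\|_p \leq \mathsf{K}(n,q) \|\nabla U\|_q \leq \frac{\mathsf{K}(n,q)}{\mathsf{K}(n,q)+\epsilon} < 1, \]
a contradiction. Hence for each $\epsilon>0$ an $A_q(\epsilon)$ exists. Sharpness of $\mathsf{K}(n,q)$ follows by inserting a rescaled Aubin--Talenti extremal supported in a small geodesic ball: any inequality with constant strictly less than $\mathsf{K}(n,q)$ would, after blow-down, contradict the Euclidean sharp constant.

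The main obstacle is the concentration analysis. A naive partition-of-unity argument transferring the Euclidean inequality chart-by-chart loses a multiplicative factor depending on the multiplicity of the cover and so does \emph{not} recover the sharp constant; the sharpness genuinely requires the blow-up and the scale-invariance of the inequality at the critical exponent. The delicate points are (i) ruling out by concentration-compactness that $\phi_k$ spreads over several points or over a positive-dimensional subvariety, so that the limit measure really is a single Dirac mass, and (ii) tuning the concentration scale $\lambda_k$ and verifying that the rescaled metric $g_k \to \delta$ with a rate fast enough not to contaminate the Sobolev quotient. The $q=1$ case, which does not sit comfortably in this framework (no uniform convexity, weak limits may load mass on the jump set of functions of bounded variation), is handled separately via the coarea formula and the sharp isoperimetric inequality on $\mathbb{R}^n$.
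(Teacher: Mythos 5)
You should first note that the paper contains no proof of this statement at all: the appendix states that it ``can be found in chapter 2 of \cite{Aubin1998}'', so the theorem is quoted, not proved, and any comparison is with the classical argument behind that citation. Judged on its own terms, your contradiction-plus-concentration strategy is a legitimate route, but it has a genuine gap at its central step. For an arbitrary contradiction sequence $\phi_k$ with $\|\phi_k\|_p = 1$, $\|\nabla \phi_k\|_q < (\mathsf{K}+\epsilon)^{-1}$ and $\|\phi_k\|_q \to 0$, nothing forces the limit of $|\phi_k|^p \dvol_g$ to be a \emph{single} Dirac mass, and nothing forces your blow-up limit to satisfy $\|U\|_{L^p(\mathbb{R}^n)} = 1$: the mass may split into several atoms, or into several bubbles at different scales around the same point, in which case any single rescaled weak limit has $\|U\|_p < 1$ (indeed possibly $U = 0$ unless $\lambda_k$ is tuned by a concentration-function argument), and the chain $1 = \|U\|_p \leq \mathsf{K}\|\nabla U\|_q \leq \mathsf{K}/(\mathsf{K}+\epsilon) < 1$ collapses. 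Brezis--Lieb only yields $\|U\|_p \leq 1$, and distributing the gradient bound among several bubbles requires a splitting of $\|\nabla \tilde{\phi}_k\|_q^q$ that does not follow from weak convergence alone when $q \neq 2$. The standard repair avoids the blow-up entirely: Lions' second concentration-compactness lemma gives $|\phi_k|^p \dvol_g \rightharpoonup \sum_j \nu_j \delta_{x_j}$ and $|\nabla \phi_k|^q \dvol_g \rightharpoonup \mu \geq \sum_j \mu_j \delta_{x_j}$ with the sharp local relation $\nu_j^{1/p} \leq \mathsf{K}(n,q)\, \mu_j^{1/q}$ (valid on the manifold since concentration is local and the metric is Euclidean to first order at a point), whence, using $p/q > 1$,
\[ 1 = \sum_j \nu_j \leq \mathsf{K}^p \sum_j \mu_j^{p/q} \leq \mathsf{K}^p \Bigl( \sum_j \mu_j \Bigr)^{p/q} \leq \mathsf{K}^p \Bigl( \liminf_k \|\nabla \phi_k\|_q^q \Bigr)^{p/q} \leq \Bigl( \tfrac{\mathsf{K}}{\mathsf{K}+\epsilon} \Bigr)^{p} < 1, \]
a contradiction requiring no single-atom claim and no rescaled profile.

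Your stated ``main obstacle'' is also mistaken, and this matters because it led you away from the proof that the paper is actually citing. A partition-of-unity argument does \emph{not} inevitably lose a multiplicative covering factor: one covers $M$ by charts in which $g$ is $(1+\delta)$-quasi-isometric to the Euclidean metric, chooses cutoffs of the form $\alpha_i^{1/q}$ with $\sum_i \alpha_i = 1$, and writes $\|\phi\|_p^q = \bigl\| \sum_i \alpha_i |\phi|^q \bigr\|_{p/q} \leq \sum_i \| \alpha_i^{1/q} \phi \|_p^q$; applying the sharp Euclidean inequality to each $\alpha_i^{1/q}\phi$ and summing $q$-th powers reproduces $\sum_i \int \alpha_i |\nabla \phi|^q \dvol_g = \|\nabla \phi\|_q^q$ exactly, with all cutoff and metric-distortion errors of size $\epsilon$ or of lower order, absorbed into $A_q(\epsilon)\|\phi\|_q$ after taking $q$-th roots. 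This elementary localization is Aubin's original proof and the one in the cited reference; it also handles $q=1$ uniformly. Your sharpness argument (inserting rescaled Talenti bubbles in a small geodesic ball, noting $\|\phi_\lambda\|_q = \lambda \|\phi\|_q \to 0$ under the critical scaling) is correct as stated.
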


\nocite{LesHouchesLectureNotes}

\nocite{Friedrich1991}



\printbibliography

\end{document}